
\documentclass{amsart}
\usepackage{amssymb, graphics, color, enumitem, mathrsfs}
\usepackage[all]{xy}
\usepackage{appendix}

 \usepackage[applemac]{inputenc}
\usepackage[cyr]{aeguill}
\usepackage[francais]{babel}

\usepackage[margin = 1in]{geometry}

\newtheorem{lemme}{Lemme}[section]
\newtheorem{proposition}[lemme]{Proposition}
\newtheorem{corollaire}[lemme]{Corollaire}
\newtheorem{theoreme}[lemme]{Théorème}

\newtheorem{exemple}[lemme]{Exemple}
\newtheorem{definition}[lemme]{Définition}
\newtheorem{remarque}[lemme]{Remarque}

\newtheorem*{remerciements}{Remerciements}

\newtheorem{hypothese}[lemme]{Hypothèse}

\newcommand\cf{cf\@. }

\newcommand\eg{e\@.g\@. }

\newcommand\pa{ \partial}

\newcommand\bbC{\mathbb C}

\newcommand\bbH{\mathbb H}

\newcommand\bbP{\mathbb P}

\newcommand\bbR{\mathbb R}
\newcommand\bbS{\mathbb S}

\newcommand\bbZ{\mathbb Z}

\renewcommand\Im{\operatorname{Im}}

\usepackage{color}

\newcommand{\lrp}[1]{\left( {#1} \right)}

\newcommand\bX{\overline{X}}

\newcommand\hB{\widehat{B}}
\newcommand\hX{\widehat{X}}
\newcommand\hW{\widehat{W}}
\newcommand\hH{\widehat{H}}
\newcommand\hS{\widehat{S}}
\newcommand\hE{\widehat{E}}
\newcommand\hF{\widehat{F}}

\newcommand\CI{\mathcal{C}^{\infty}}

\newcommand\Rm{\operatorname{Rm}}

\newcommand\cC{\mathcal{C}}
\newcommand\cA{\mathcal{A}}

\newcommand\cF{\mathcal{F}}
\newcommand\cL{\mathcal{L}}

\newcommand\tH{\widetilde{H}}
\newcommand\cV{\mathcal{V}}
\newcommand\cU{\mathcal{U}}
\newcommand\cI{\mathcal{I}}
\newcommand\cJ{\mathcal{J}}

\newcommand\hxi{\widehat{\xi}}

\newcommand\txi{\widetilde{\xi}}

\newcommand\pr{\operatorname{pr}}

\newcommand\phg{\operatorname{phg}}

\newcommand\Id{\operatorname{Id}}

\newcommand\cH{\mathcal{H}}

\newcommand\hM{\widehat{M}}

\newcommand\hphi{\hat{\phi}}

\newcommand\cM{\mathcal{M}}

\newcommand\AC{\operatorname{AC}}

\newcommand\SU{\operatorname{SU}}

\newcommand\Sp{\operatorname{Sp}}

\newcommand\QAC{\operatorname{QAC}}

\newcommand\Qb{\operatorname{Qb}}
\newcommand\nQb{\operatorname{\gp Qb}}
\newcommand\QFB{\operatorname{QFB}}
\newcommand\nQFB{\operatorname{\gp QFB}}
\newcommand\nQAC{\operatorname{\gp QAC}}

\newcommand\ALE{\operatorname{ALE}}

\newcommand\vol{\operatorname{vol}}

\newcommand\tW{\widetilde{W}}

\newcommand\cK{\mathcal{K}}

\newcommand\GL{\operatorname{GL}}

\newcommand\cS{\mathcal{S}}

\newcommand\cB{\mathcal{B}}

\newcommand\TN{\operatorname{TN}}

\newcommand\hV{\widehat{V}}
\newcommand\cW{\mathcal{W}}
\newcommand\cT{\mathcal{T}}

\newcommand\bbT{\mathbb{T}}

\newcommand\diag{\operatorname{diag}}

\newcommand\cO{\mathcal{O}}
\newcommand\gn{\mathfrak{n}}
\newcommand\gp{\mathfrak{p}}
\newcommand\gt{\mathfrak{t}}

\begin{document}
\title[G\'eom\'etrie \`a l'infini des vari\'et\'es hyperk\"ahl\'eriennes toriques]
{G\'eom\'etrie \`a l'infini des vari\'et\'es hyperk\"ahl\'eriennes toriques}

\author{Fr\'ed\'eric Rochon}

\maketitle

\begin{abstract}
On montre que les métriques hyperkählériennes toriques simplement connexes de type topologique fini ayant une croissance du volume maximale sont génériquement quasi-asymptotiquement coniques, ce qui permet de calculer explicitement leurs groupes de cohomologie $L^2$ réduite.  Dans le cas asymptotiquement conique, on donne aussi une description fine de la géométrie à l'infini de leurs déformations de Taub-NUT d'ordre 1 en termes d'une compactification par une variété à coins, ce qui permet d'établir que ces déformations sont à géométrie bornée, d'obtenir des estimations sur leur courbure et la croissance de leur volume et d'identifier de façon unique leur cône tangent à l'infini.  Dans plusieurs exemples, la dimension de ce cône tangent est strictement plus petite que l'ordre de la croissance du volume.  Enfin, nos méthodes montrent que les déformations de Taub-NUT d'ordre maximal de la métrique euclidienne sont quasi-fibrées au bord, ce qui permet entre autres d'identifier de façon unique leur cône tangent à l'infini, de calculer leurs groupes de cohomologie $L^2$ réduite et  de démontrer  la conjecture de $S$-dualité de Sen pour les monopôles centrés de type $(1,1,\ldots,1)$.
\end{abstract}

\tableofcontents

\numberwithin{equation}{section}

\section{Introduction}

Une variété hyperkählérienne de dimension $4n$ est une variété riemannienne ayant pour groupe d'holonomie un sous-groupe du groupe symplectique (compact) $\Sp(n)=\operatorname{O}(4n)\cap \GL(n,\bbH)$.  Une telle variété admet des structures complexes parallèles $I_1,I_2$ et $I_3$ satisfaisant à la relation quaternionique $I_1 I_2=I_3$.  Elle est en particulier kählérienne par rapport à chacune de ces structures complexes.  Depuis l'introduction de la terminologie par Calabi \cite{Calabi}, les variétés hyperkählériennes ont été l'objet d'études intenses.  Plusieurs espaces de modules sont en fait canoniquement des variétés hyperkählériennes, notamment le revêtement universel de l'espace de modules réduit des monopôles $\SU(2)$ de charge magnétique $k$ sur $\bbR^3$, l'espace de modules des instantons sur $\bbR^4$ ou encore l'espace de modules des fibrés de Higgs sur une surface de Riemann.

En dimension $4$, $\Sp(1)=\SU(2)$ et la notion de variété hyperkählérienne coïncide avec celle de variété de Calabi-Yau.  Par le théorème de Yau \cite{Yau1978}, les exemples compacts correspondent donc dans ce cas aux surfaces kählériennes compactes ayant une première classe de Chern nulle.  Toujours en dimension $4$, il y a aussi beaucoup d'exemples non compacts appelés instantons gravitationnels lorsque le tenseur de courbure est de carré intégrable.  Leur géométrie à l'infini est maintenant relativement bien comprise.  Lorsque la croissance du volume est maximale, ce sont des instantons gravitationnels asymptotiquement localement euclidiens (ALE), c'est-à-dire modelés asymptotiquement à l'infini par $\bbC^2/\Gamma$ pour un sous-groupe discret $\Gamma$ de $\SU(2)$, en quel cas une classification complète a été obtenue par Kronheimer \cite{Kronheimer1989}.  Plus généralement, suite entre autres à une conjecture de Cherkis et Kaputsin \cite{Etesi-Jardim} et des travaux de Minerbe \cite{Minerbe2010,Minerbe2011} et Hein \cite{Hein2012}, une classification complète a été obtenue par Chen-Chen \cite{ChenChen1,ChenChen2,ChenChen3} lorsque le tenseur de courbure décroît plus rapidement que quadratiquement, donnant lieu à quatre types de géométrie à l'infini en fonction de la croissance du volume: ALE lorsque la croissance est maximale, ALF (pour \emph{asymptotically locally flat} en anglais) lorsqu'elle est cubique, ALG si elle est quadratique et ALH si elle est linéaire.  

En dimensions plus grandes, la situation est assez différente.  D'abord, être une variété hyperkählérienne est une condition beaucoup plus stricte que d'être une variété de Calabi-Yau.  Le théorème de Yau \cite{Yau1978} peut malgré tout être utilisé pour obtenir des exemples de variétés hyperkählériennes compactes, voir entre autres les résultats de Beauville \cite{Beauville}.  Dans le cadre non compact, beaucoup d'exemples peuvent être obtenus grâce à la méthode du quotient hyperkählérien introduite dans \cite{HKLR1987}.  Cette méthode est au cœur de la classification des instantons gravitationnels ALE de Kronheimer \cite{Kronheimer1989}.  Elle est aussi centrale dans la notion de variété de carquois de Nakajima \cite{Nakajima1994}, en quelque sorte une généralisation des instantons gravitationnels ALE en dimensions plus grandes, ainsi que dans la construction de variétés hyperkählériennes toriques \cite{PP1988,Gotto1992,GRG1997,BD,Bielawski}.  Les variétés d'arc (\emph{bow varieties} en anglais) de Cherkis \cite{Cherkis2011}, une généralisation des variétés de carquois, sont aussi définies en termes d'un quotient hyperkählérien.  

Comme on commence à peine à comprendre la géométrie à l'infini de ces métriques, une classification éventuelle semble encore bien lointaine.  Motivée par une conjecture de Vafa et Witten \cite{Vafa-Witten} sur la cohomologie $L^2$ réduite des variétés de carquois, une série de travaux \cite{Carron2011, Melrose_London, DR} a réussi à identifier précisément la géométrie à l'infini des variétés de carquois en montrant qu'elles sont (génériquement) quasi-asymptotiquement coniques ($\QAC$) au sens de Degeratu-Mazzeo \cite{DM2018}.  De même, motivés par la conjecture de Sen \cite{Sen}, les travaux \cite{KS,FKS} indiquent qu'on est en voie de montrer que la métrique hyperkählérienne sur le revêtement universel de l'espace de modules réduit des monopôles $\SU(2)$ de charge $k$ sur $\bbR^3$ est quasi-fibrée au bord ($\QFB$) au sens de \cite{CDR}.

Continuant sur cette lancée, le but du présent article est de déterminer la géométrie à l'infini de certaines variétés hyperkählériennes toriques complètes simplement connexes de type topologique fini.  Lorsqu'une telle variété a une croissance du volume maximale, on sait par la classification de Bielawski \cite[Theorem~1]{Bielawski} qu'elle correspond à un quotient hyperkählérien
\begin{equation}
   M_{\zeta}= \mu_N^{-1}(-\zeta)/N
\label{int.1}\end{equation}
de $\bbH^d$ par un sous-tore $N$ du tore diagonal $T^d$ dans $\Sp(d)$, où $\zeta\in \bbR^3\otimes \gn^*$ avec $\gn^*$ le dual de l'algèbre de Lie $\gn$ de $N$ et où $\mu_N: \bbH^d\to \bbR^3\otimes \gn^*$ est une application moment hyperhamiltonienne pour l'action de $N$ sur $\bbH^d$.  En adoptant l'approche de \cite{Melrose_London,DR} à ce cadre, on obtient le résultat suivant (voir le Corollaire~\ref{qac.19} ci-bas pour une description plus détaillée).
\begin{theoreme}
Pour $\zeta\in \bbR^3\otimes \gn^*$ générique, la métrique hyperkählérienne du quotient \eqref{int.1} est $\QAC$.
\label{int.2}\end{theoreme}

En particulier, comme par \cite{CDR} les métriques $\QAC$ sont associées à une structure de Lie à l'infini au sens de \cite{ALN04}, on sait par \cite{ALN04,Bui} qu'elles sont automatiquement complètes à géométrie bornée.  Une description plus fine de la structure $\QAC$ montre aussi (Corollaire~\ref{cone.2} ci-bas) que le cône tangent à l'infini du quotient \eqref{int.2} est le quotient singulier $M_0$.  Le Théorème~\ref{int.2} nous permet aussi de calculer la cohomologie $L^2$ réduite comme suit (voir aussi le Corollaire~\ref{qac.20} ci-bas).
\begin{corollaire}
Pour $\zeta\in \bbR^3\otimes \gn^*$ générique, la cohomologie $L^2$ réduite du quotient hyperkählérien \eqref{int.1} est donnée par
$$
   \Im( H^*_c(M_{\zeta})\to H^*(M_\zeta)),
$$
où $H^*(M_{\zeta})$ et $H^*_c(M_{\zeta})$ dénotent les groupes de cohomologie de de Rham pour les formes lisses et les formes lisses à support compact respectivement.
\label{int.3}\end{corollaire}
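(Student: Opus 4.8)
The plan is to combine the $\QAC$ structure furnished by Theorem~\ref{int.2} with Hodge theory and the elliptic calculus of \cite{CDR}, following the strategy of \cite{DR}. For generic $\zeta$ the metric on $M_\zeta$ is $\QAC$, hence underlies a Lie structure at infinity in the sense of \cite{ALN04} and is complete with bounded geometry by \cite{ALN04,Bui}. On any complete manifold the Hodge--Kodaira decomposition identifies the reduced $L^2$ cohomology with the space $\cH^k_{(2)}(M_\zeta)$ of square-integrable harmonic $k$-forms. The corollary is thus equivalent to the assertion that, for each $k$,
\[
  \cH^k_{(2)}(M_\zeta)\ \cong\ \Im\bigl(H^k_c(M_\zeta)\to H^k(M_\zeta)\bigr),
\]
and this is the statement I would prove.

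First I would develop the Fredholm theory of the Hodge--de Rham operator $D=d+\delta$ inside the $\QAC$ pseudodifferential calculus associated with the compactification of $M_\zeta$ by a manifold with corners, as in \cite{DM2018,CDR}. The key analytic inputs are that $D$, viewed as an operator between suitably weighted Sobolev spaces $x^{a}H^{s}$, is Fredholm for every weight $a$ outside a discrete set of indicial roots determined by the normal operators along the boundary hypersurfaces, and that every $L^2$ harmonic form is polyhomogeneous there. This reduces the problem to a weighted Hodge theorem: one must show that $L^2$ harmonic forms in fact decay strictly faster than the borderline $L^2$ rate, so that they represent classes in $H^k(M_\zeta)$ and simultaneously pair with compactly supported forms.

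The decay step is where the maximal volume growth enters. Because the tangent cone at infinity is the singular quotient $M_0$ (Corollaire~\ref{cone.2}), the relevant normal operators are modelled on cones and iterated cones, and the indicial roots can be read off from the Hodge cohomology of the cross-sections of the boundary fibration. I would verify a spectral gap statement, namely that no indicial root lies at the critical $L^2$ weight in the degrees under consideration; this forces harmonic forms to lie in a weighted space on which integration by parts is valid, yielding well-defined maps $\cH^k_{(2)}\to H^k$ and $\cH^k_{(2)}\to H^k_c$ whose composition with the natural maps is the identity on harmonic representatives. A Poincar\'e duality argument in the $\QAC$ setting then pins down the image precisely as $\Im(H^k_c\to H^k)$, matching the known answer in the $\ALE$ case and its extension to quiver varieties in \cite{DR,Carron2011}.

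The principal obstacle is the analysis at infinity. The boundary of the natural compactification of a $\QAC$ space is not a single fibration but an iterated (quasi-fibered) structure reflecting the stratification of $M_0$ by the fixed-point sets of subtori of $N$, so computing the indicial roots and the cohomology of the associated family of cross-sections---and in particular ruling out harmonic forms at the critical weight in each degree---is delicate. Genericity of $\zeta$ is essential here: it guarantees that the relevant fixed-point strata are as simple as possible and that the tangent cone is exactly $M_0$, so that the indicial computation closes up and the weighted cohomology can be identified with $\Im(H^*_c\to H^*)$.
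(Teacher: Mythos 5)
Your general strategy --- Hodge theory for the $\QAC$ metric via Fredholm theory of the Hodge--de Rham operator on the compactification, following \cite{DR} --- is indeed the framework the paper invokes: its proof is essentially a reduction to the machinery of \cite{KR1,KR2} and \cite[\S~5]{DR} applied to the $\QAC$ structure of Corollaire~\ref{qac.19}. However, your proposal has a genuine gap in the way it closes the argument. The ``spectral gap'' you postulate --- that in every degree no indicial root sits at the critical $L^2$ weight, so that all $L^2$ harmonic forms decay strictly faster than the borderline rate --- is not something one can arrange, generically or otherwise: the cross-sections of the iterated boundary fibration do in general carry harmonic forms contributing indicial roots exactly on the $L^2$ line (handling these contributions, rather than excluding them, is precisely what the $\QFB$/$\QAC$ Hodge theory of \cite{KR1,KR2} is built for). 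As a consequence, the analytic results identify the reduced $L^2$ cohomology with $\Im(H^k_c(M_\zeta)\to H^k(M_\zeta))$ only in a restricted range of degrees, roughly up to the middle degree, and not in all degrees at once.

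The ingredient the paper uses to handle the remaining degrees, and which is entirely absent from your proposal, is the topological vanishing theorem of Bielawski--Dancer \cite[Theorem~6.7]{BD}: $H^k(M_\zeta)=\{0\}$ for $k>\tfrac{\dim_{\bbR}M_\zeta}{2}$. Following \cite[\S~6]{DR}, this vanishing, combined with Poincar\'e duality for $L^2$ harmonic forms (the Hodge star exchanges degrees $k$ and $\dim M_\zeta-k$), shows that both $\cH^k(M_\zeta)$ and $\Im(H^k_c\to H^k)$ vanish above the middle degree, and reduces the comparison to the range of degrees where the general theorems apply. Without this input your argument cannot conclude: the Poincar\'e duality step as you invoke it pins down nothing in high degrees unless one already knows that one of the two sides vanishes there. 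So the fix is not a more refined indicial analysis but this missing topological fact.
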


Comme décrit dans \cite{Bielawski}, il est aussi possible de considérer des déformations de Taub-NUT de ces métriques hyperkählériennes toriques, ce qui a pour effet notamment de diminuer l'ordre de  croissance du volume.  Pour décrire la géométrie à l'infini de certaines de ces déformations, on introduit la classe des métriques tordues quasi-feuilletées au bord (Définition~\ref{tqfb.15} ci-bas), une généralisation des métriques tordues $\QAC$ de \cite{CR,CR2023} et des métriques feuilletées au bord de \cite{Rochon2012} qui, en tant que métriques associées à une structure de Lie à l'infini, sont automatiquement complètes à géométrie bornée.  Pour voir que les déformations de Taub-NUT correspondent à ce genre de géométrie, il faut toutefois se restreindre au cas où la métrique hyperkählérienne du Théorème~\ref{int.2} est en fait asymptotiquement conique (AC), ce qui revient à demander à ce que sa compactification $\QAC$, qui en général est une variété à coins, soit en fait une variété à bord (voir le Corollaire~\ref{cone.3} ci-bas pour des exemples).  On doit aussi supposer que la déformation de Taub-NUT est d'ordre $1$, c'est-à-dire qu'elle est spécifiée par une injection linéaire $\sigma:\bbR\hookrightarrow \gt^d$, où $\gt^d$ dénote l'algèbre de Lie du tore diagonal $T^d$.  Dans ce cas, on peut formuler notre description de la géométrie à l'infini comme suit, voir aussi le Théorème~\ref{dtn.25} ci-bas pour un énoncé plus détaillé.  
\begin{theoreme}
Lorsque la métrique hyperkählérienne du Théorème~\ref{int.2} est AC, une déformation de Taub-NUT d'ordre $1$ de cette métrique est une métrique tordue quasi-feuilletée au bord.  En particulier, elle a une croissance du volume (à l'infini) d'ordre $\dim M_\zeta-1$.  
\label{int.4}\end{theoreme} 

De ce résultat découle une décroissance à l'infini de la courbure sectionnelle (en général seulement dans certaines directions, voir le Corollaire~\ref{dtn.26} ci-bas pour l'énoncé précis).  Il permet aussi d'identifier un unique cône tangent à l'infini (Corollaire~\ref{dtn.29}).  En prenant $\sigma$ de sorte que $T^d= \overline{\exp\circ \sigma (\bbR)}$ et en considérant la déformation de Taub-NUT correspondante de $\bbH^d$ muni de sa métrique canonique $g_{\bbH^d}$, on obtient en particulier le résultat suivant (voir l'Exemple~\ref{dtn.36} et le Lemme~\ref{dtn.36b} ci-bas pour plus de détails).
\begin{corollaire}
Pour $\sigma$ tel que $T^d= \overline{\exp\circ \sigma (\bbR)}$, la déformation de Taub-NUT de $(\bbH^d,g_{\bbH^d})$ spécifiée par $\sigma$ a pour unique cône tangent à l'infini un espace stratifié de dimension $3d$.  Si $d>1$, cette dimension est en particulier strictement inférieure à l'ordre $4d-1$ de la croissance du volume de la métrique.  De plus, la courbure sectionnelle décroît linéairement à l'infini et en faisant varier $\sigma$, on obtient une infinité non dénombrable de telles métriques sur $\bbC^{2d}$ qui sont mutuellement non isométriques, même à une homothétie près.  
\label{int.5}\end{corollaire}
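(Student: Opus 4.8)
Mon plan est d'appliquer le Th\'eor\`eme~\ref{int.4} au cas d\'eg\'en\'er\'e o\`u le sous-tore $N\subset T^d$ est trivial. Le quotient hyperk\"ahl\'erien \eqref{int.1} est alors $\bbH^d$ tout entier muni de sa m\'etrique plate canonique $g_{\bbH^d}$, laquelle est un c\^one m\'etrique exact sur $S^{4d-1}$ et donc trivialement asymptotiquement conique. Le Th\'eor\`eme~\ref{int.4} s'applique donc directement \`a la d\'eformation de Taub-NUT d'ordre $1$ sp\'ecifi\'ee par $\sigma$: celle-ci est une m\'etrique tordue quasi-feuillet\'ee au bord, de croissance du volume d'ordre $\dim\bbH^d-1=4d-1$, et ses cons\'equences g\'en\'erales fournissent la g\'eom\'etrie born\'ee ainsi que l'existence d'un unique c\^one tangent \`a l'infini (Corollaires~\ref{dtn.26} et~\ref{dtn.29}). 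Restent alors trois points sp\'ecifiques \`a cet exemple: la dimension du c\^one tangent, la d\'ecroissance de la courbure, et la multiplicit\'e non d\'enombrable.

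Pour le c\^one tangent, je partirais de la description explicite de la structure tordue quasi-feuillet\'ee au bord donn\'ee par le Th\'eor\`eme~\ref{dtn.25}: l'effet de la d\'eformation de Taub-NUT d'ordre $1$ est de faire collapser \`a l'infini les orbites du sous-groupe \`a un param\`etre $t\mapsto\exp(\sigma(t))$ agissant sur $\bbH^d$. L'hypoth\`ese $T^d=\overline{\exp\circ\sigma(\bbR)}$ est ici d\'ecisive: l'adh\'erence de ce sous-groupe \'etant le tore $T^d$ tout entier, le collapse force \`a la limite l'effondrement de toutes les orbites de $T^d$, de sorte que le c\^one tangent fourni par le Corollaire~\ref{dtn.29} s'identifie au quotient m\'etrique $\bbH^d/T^d$. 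Via l'application moment hyperk\"ahl\'erienne $\mu_{T^d}\colon\bbH^d\to\bbR^3\otimes(\gt^d)^*$ de l'action de $T^d$, dont les fibres g\'en\'eriques sont exactement les orbites, ce quotient s'identifie topologiquement \`a $\bbR^{3d}$ muni d'une m\'etrique conique singuli\`ere. Comme l'action de $T^d$ n'est pas libre, c'est un espace stratifi\'e par types d'orbites, de dimension
\begin{equation*}
  \dim\bbH^d-\dim T^d=4d-d=3d.
\end{equation*}
L'in\'egalit\'e $3d<4d-1$ \'equivaut \`a $d>1$, d'o\`u l'affirmation comparant la dimension du c\^one tangent \`a l'ordre de croissance du volume. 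La d\'ecroissance lin\'eaire de la courbure sectionnelle s'obtient alors en sp\'ecialisant les estimations du Corollaire~\ref{dtn.26}, apr\`es avoir v\'erifi\'e que, lorsque $\overline{\exp\circ\sigma(\bbR)}=T^d$, la d\'ecroissance a lieu dans toutes les directions.

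Pour la famille non d\'enombrable, j'observerais qu'une injection $\sigma\colon\bbR\hookrightarrow\gt^d\cong\bbR^d$, $\sigma(t)=tv$, v\'erifie $\overline{\exp\circ\sigma(\bbR)}=T^d$ si et seulement si les composantes de $v$ sont $\bbQ$-lin\'eairement ind\'ependantes; l'ensemble de telles directions $[v]$ est non d\'enombrable. Une homoth\'etie de $g_{\bbH^d}$ ne modifie pas la direction $[v]$, et les isom\'etries de $\bbH^d$ pr\'eservant la structure hyperk\"ahl\'erienne et le tore $T^d$ n'agissent sur $\gt^d$ que par le normalisateur de $T^d$, soit un groupe fini de permutations et de changements de signe des facteurs quaternioniques. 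Le point d\'elicat, et c'est l\`a que j'attends le principal obstacle, est de d\'emontrer la non-isom\'etrie proprement dite: je proc\'ederais en exhibant un invariant de la structure tordue quasi-feuillet\'ee au bord \`a l'infini --- typiquement les taux relatifs d'effondrement des $d$ facteurs $S^1$ de $T^d$, lisibles sur la fibration au bord --- qui d\'etermine la direction $[v]$ modulo ce groupe fini et qui est pr\'eserv\'e par isom\'etrie comme par homoth\'etie. Une infinit\'e non d\'enombrable de directions distinctes modulo un groupe fini fournit alors une infinit\'e non d\'enombrable de m\'etriques deux \`a deux non isom\'etriques, m\^eme \`a homoth\'etie pr\`es, sur $\bbC^{2d}\cong\bbH^d$.
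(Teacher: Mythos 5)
The skeleton of your proposal --- specializing Th\'eor\`eme~\ref{int.4} to $N=\{\Id\}$, $\zeta=0$ (Exemple~\ref{dtn.7a}), reading off bounded geometry and volume growth of order $4d-1$, noting that $3d<4d-1$ exactly when $d>1$, and getting linear curvature decay from Corollaire~\ref{dtn.26} once one checks $V_\cI=\{0\}$ (which does hold: density of $\exp\circ\sigma(\bbR)$ in $T^d$ forces every component of $\sigma(1)$ to be nonzero, hence $\cI=\emptyset$) --- is the same as the paper's Exemple~\ref{dtn.36}. But your identification of the tangent cone is incorrect. Corollaire~\ref{dtn.29}, with $N$ trivial and $T_\sigma=T^d$, gives $(\mu_\sigma^{-1}(0)/T^d)\times\Im\bbH$, not the metric quotient $\bbH^d/T^d$. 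What collapses under rescaling is not flat $\bbH^d$ along all its $T^d$-orbits: the Taub-NUT metric lives on the graph $X_0$ of $\mu_\sigma$, whose undeformed metric $g_\cT$ (the restriction of $g_{\bbH^d}+g_{\Im\bbH}$, i.e. $g_{\bbH^d}+\mu_\sigma^*g_{\Im\bbH}$ under projection) already has the singular cone $\mu_\sigma^{-1}(0)\times\Im\bbH$ as tangent cone at infinity, and the deformation then collapses the $T_\sigma$-orbit closures \emph{inside that cone}. Already for $d=1$ (the classical Taub-NUT metric on $\bbC^2$) your answer would be $\bbH/S^1$, the metric cone over the round sphere of radius $\frac12$, whereas the true tangent cone is flat $\bbR^3=\{0\}\times\Im\bbH$. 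Your count $4d-d=3d$ lands on the correct number only because the correct total space $\mu_\sigma^{-1}(0)\times\Im\bbH$ happens also to be $4d$-dimensional; the stratified spaces are genuinely different (for $d=2$, $(\bbH/S^1)^2$ has two three-dimensional singular strata, while $(\mu_\sigma^{-1}(0)/T^2)\times\Im\bbH$ has only one).

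The second gap is the non-isometry claim, which you yourself flag as the main obstacle: your sketch does not close it. Restricting attention to isometries acting through the normalizer of $T^d$ presupposes that an isometry between $(\bbC^{2d},g_{\zeta,\TN_\sigma})$ and $(\bbC^{2d},g_{\zeta,\TN_{\sigma'}})$ preserves the flat hyperk\"ahl\'erienne toric structure; an abstract isometry is merely a diffeomorphism intertwining the two metrics and need not do so. And the invariant you propose (``relative collapse rates of the $S^1$ factors'') is neither defined nor shown to be an isometry invariant. The paper's Lemme~\ref{dtn.36b} supplies exactly the missing ingredient: using the asymptotic model \eqref{dtn.31b}, the $(\hphi,\gp)$-feuilletage $\cF$ restricted to $\hH_4\cap\hX_\zeta$ is geometrically determined by the metric; restricted to suitable two-dimensional tori it yields Kronecker foliations of slopes $a_i/a_j$, and the rigidity theorem of Connes and Rieffel --- Kronecker foliations of slopes $\alpha$ and $\beta$ are diffeomorphic if and only if $\beta=(a\alpha+b)/(c\alpha+d)$ for a matrix of $\GL(2,\bbZ)$ --- then separates uncountably many classes, since $\GL(2,\bbZ)$ is countable. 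Without this (or some substitute establishing that a well-defined, isometry-invariant datum at infinity recovers $[\sigma]$ up to a countable group), your final claim remains unproved.
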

\begin{remarque}
Des métriques de Calabi-Yau sur $\bbC^n$ ayant un cône tangent à l'infini de dimension strictement inférieure à l'ordre de croissance du volume ont aussi été récemment identifiées par Min \cite{Min2}.
\label{int.6}\end{remarque}

Lorsque $\sigma$ est plutôt de la forme $\sigma(t)=(a_1t,\ldots,a_d t)$ avec $a_1,\ldots,a_d$ des entiers non nuls, le Théorème~\ref{int.4} permet aussi, en les raffinant un peu, de retrouver les résultats récents de Min \cite{Min2025} concernant la métrique de Taubian-Calabi sur $\bbC^{2d}$ et ses variantes, voir l'Exemple~\ref{dtn.35} ci-bas pour plus de détails.  Remarquons que les déformations de Taub-NUT d'ordre $1$ de la métrique euclidienne $g_{\bbH^{d}}$ apparaissent naturellement en physique mathématique.  En effet, par \cite{GRG1997} et \cite{LWY1996b}, elles peuvent être interprétées comme étant des espaces de modules de monopôles sur $\bbR^3$ de groupe de jauge $\SU(d+2)$ spontanément brisé à $\SU(d)\times U(1)\times U(1)$, ce qui correspond physiquement à deux monopôles massifs et $d-1$ monopôles de masse nulle.

Le Théorème~\ref{int.4} s'applique aussi aux déformations de Taub-NUT d'ordre 1 de la métrique de Calabi \cite[Théorème~5.3]{Calabi} sur $T^*\bbC\bbP^n$, un autre exemple de métrique hyperkählérienne torique AC, donnant lieu en particulier au résultat suivant (voir les Exemples~\ref{dtn.40}, \ref{dtn.37} et \ref{dtn.37b} ci-bas pour plus détails).
\begin{corollaire}
Les déformations de Taub-NUT d'ordre $1$ de la métrique de Calabi sur $T^*\bbC\bbP^n$  sont des métriques tordues quasi-feuilletées au bord dont la croissance du volume est d'ordre $4n-1$.  De plus, génériquement, la courbure sectionnelle décroît linéairement à l'infini et le cône tangent à l'infini est de dimension $3n$.  Toutefois, pour certaines déformations, la courbure sectionnelle ne décroît pas à l'infini dans certaines directions.  De plus, pour chaque $k\in\{3n,3n+1,\ldots, 4n-1\}$, il existe une déformation de Taub-NUT d'ordre $1$ de la métrique de Calabi sur $T^*\bbC\bbP^n$ ayant un cône tangent à l'infini de dimension $k$.
\label{int.7}\end{corollaire}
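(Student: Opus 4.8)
Comme il s'agit d'un corollaire du Th\'eor\`eme~\ref{int.4} (dont la version d\'etaill\'ee est le Th\'eor\`eme~\ref{dtn.25}), le plan est d'abord d'identifier la m\'etrique de Calabi sur $T^*\bbC\bbP^n$ comme l'une des m\'etriques hyperk\"ahl\'eriennes toriques AC auxquelles ce th\'eor\`eme s'applique, puis d'en extraire l'information g\'eom\'etrique \`a l'infini \`a l'aide des Corollaires~\ref{dtn.26} et~\ref{dtn.29}. Je commencerais par rappeler que $(T^*\bbC\bbP^n,g_{\can})$ s'obtient comme le quotient hyperk\"ahl\'erien $\mu_N^{-1}(-\zeta)/N$ de $\bbH^{n+1}$ par le cercle diagonal $N=\gU(1)\subset T^{n+1}$, pour $\zeta$ g\'en\'erique; on a donc ici $d=n+1$ et $\dim M_\zeta=4(n+1)-4=4n$. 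Il reste \`a v\'erifier que cette m\'etrique est bien AC au sens requis, c'est-\`a-dire que sa compactification $\QAC$ fournie par le Th\'eor\`eme~\ref{int.2} est une vari\'et\'e \`a bord et non seulement \`a coins; cela se ram\`ene \`a examiner l'arrangement d'hyperplans associ\'e au quotient et \`a constater la lissit\'e du lien du c\^one singulier $M_0$ (Exemples~\ref{dtn.40}, \ref{dtn.37} et~\ref{dtn.37b}).

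Une fois l'hypoth\`ese AC v\'erifi\'ee, le Th\'eor\`eme~\ref{int.4} s'applique mot pour mot: toute d\'eformation de Taub-NUT d'ordre $1$ de $g_{\can}$ est une m\'etrique tordue quasi-feuillet\'ee au bord, et la croissance du volume est d'ordre $\dim M_\zeta-1=4n-1$, ce qui donne la premi\`ere assertion. La d\'ecroissance de la courbure s'obtient ensuite via le Corollaire~\ref{dtn.26} et l'identification du c\^one tangent \`a l'infini via le Corollaire~\ref{dtn.29}.

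Le point central de l'argument est le calcul de la dimension du c\^one tangent en fonction du choix de $\sigma:\bbR\hookrightarrow\gt^{n+1}$. L'id\'ee directrice est que seules les directions du \emph{tore r\'esiduel} $T^{n+1}/N\cong T^n$, d'alg\`ebre de Lie $\gt^{n+1}/\gn$, interviennent dans la g\'eom\'etrie \`a l'infini de $M_\zeta$, de sorte que la dimension du c\^one tangent vaut
$$
   \dim M_\zeta-\bar m=4n-\bar m,
$$
o\`u $\bar m$ d\'esigne la dimension de l'adh\'erence $\overline{\exp\circ\bar\sigma(\bbR)}$ du sous-groupe \`a un param\`etre engendr\'e par la projection $\bar\sigma$ de $\sigma$ sur $\gt^{n+1}/\gn$; ce sont en effet ces directions qui se collapsent \`a l'infini, en parfaite analogie avec le cas de $\bbH^d$ trait\'e au Corollaire~\ref{int.5}, o\`u le c\^one tangent est de dimension $3d=4d-d$ d\`es que $T^d=\overline{\exp\circ\sigma(\bbR)}$. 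Pour $\sigma$ g\'en\'erique, $\bar\sigma$ engendre un sous-tore dense, d'o\`u $\bar m=n$ et un c\^one tangent de dimension $3n$; c'est aussi dans ce cas que le Corollaire~\ref{dtn.26} fournit une d\'ecroissance lin\'eaire de la courbure dans toutes les directions. Pour atteindre les autres valeurs, je choisirais pour chaque $\bar m\in\{1,\ldots,n\}$ une projection $\bar\sigma(t)=(a_1t,\ldots,a_nt)$ dont les coefficients engendrent un $\bbQ$-espace vectoriel de dimension $\bar m$ (par exemple $a_1,\ldots,a_{\bar m}$ rationnellement ind\'ependants et $a_{\bar m+1}=\cdots=a_n=0$), de sorte que $\overline{\exp\circ\bar\sigma(\bbR)}$ soit un sous-tore de dimension exactement $\bar m$; un rel\`evement injectif $\sigma:\bbR\hookrightarrow\gt^{n+1}$ donne alors une d\'eformation de c\^one tangent de dimension $4n-\bar m$, d'o\`u toute la plage $\{3n,3n+1,\ldots,4n-1\}$. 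Lorsque $\bar m<n$, il subsiste des directions du tore r\'esiduel non collaps\'ees, et le Corollaire~\ref{dtn.26} montre alors que la courbure sectionnelle ne d\'ecro\^it pas \`a l'infini le long de ces directions, ce qui \'etablit la troisi\`eme assertion.

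L'obstacle principal me semble \^etre la justification pr\'ecise de la formule $4n-\bar m$ \`a partir du Corollaire~\ref{dtn.29}, c'est-\`a-dire le d\'ecompte exact des directions collaps\'ees et leur identification avec la fibre du feuilletage \`a l'infini induit par $\sigma$ sur le tore r\'esiduel. Il faudra pour cela d\'erouler la description fine de la structure tordue quasi-feuillet\'ee au bord (Th\'eor\`eme~\ref{dtn.25}) dans ce cas torique, contr\^oler l'effet de la r\'eduction hyperk\"ahl\'erienne par $N$ sur la combinatoire des hyperplans, et s'assurer que le rel\`evement de $\bar\sigma$ choisi reste une injection telle que $\bar\sigma\neq 0$ (afin d'\'eviter le cas d\'eg\'en\'er\'e $\sigma(\bbR)=\gn$ qui donnerait une dimension $4n$ incompatible avec la croissance du volume). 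La v\'erification de l'hypoth\`ese AC pour la m\'etrique de Calabi, quoique attendue, demandera elle aussi un examen soigneux de cet arrangement d'hyperplans.
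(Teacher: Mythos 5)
Votre plan d'ensemble est exactement celui du papier : v\'erification que la m\'etrique de Calabi rel\`eve du Corollaire~\ref{cone.3} (Exemple~\ref{con.5}), application du Th\'eor\`eme~\ref{dtn.25} et du Lemme~\ref{vg.1} pour la premi\`ere assertion, puis des Corollaires~\ref{dtn.26} et \ref{dtn.29} pour la courbure et le c\^one tangent. Votre formule de dimension $4n-\bar m$ est correcte : l'image de $T_\sigma$ dans $T^{n+1}/N$ co\"{\i}ncide avec l'adh\'erence de $\exp\circ\bar\sigma(\bbR)$, donc $\bar m=\dim T_\sigma-\dim(T_\sigma\cap N)$ et votre formule \'equivaut \`a \eqref{dtn.34}. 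Votre param\'etrisation par $\bar m\in\{1,\ldots,n\}$ recouvre ainsi toutes les valeurs $k\in\{3n,\ldots,4n-1\}$, l\`a o\`u le papier utilise les familles explicites des Exemples~\ref{dtn.40} ($T_\sigma=T^{n+1}$, dimension $3n$), \ref{dtn.37} ($T_\sigma=T_\cI$ avec $1<\dim T_\cI<n+1$, dimensions $3n,\ldots,4n-2$) et \ref{dtn.37b} ($\sigma$ enti\`ere avec composantes nulles, dimension $4n-1$).

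Il y a en revanche une erreur r\'eelle dans votre traitement de la courbure. Le Corollaire~\ref{dtn.26} ne fait pas intervenir $\bar m$ : sa dichotomie est gouvern\'ee par le lieu $V_\cI$ des points fixes de l'action de $\bbR$ sur $\bbH^{n+1}$, o\`u $\cI$ est le plus grand sous-ensemble tel que $\sigma(\bbR)\subset\gt_\cI$. Autrement dit, la d\'ecroissance lin\'eaire partout a lieu lorsque aucune composante de $\sigma$ ne s'annule identiquement ($V_\cI=\{0\}$), tandis que la non-d\'ecroissance dans certaines directions a lieu d\`es que $V_\cI\ne\{0\}$ ; ces conditions sont ind\'ependantes de $\bar m$. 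Pour $n\ge 2$, le choix $\sigma(t)=(t,2t,\ldots,2t)$ (dont l'image n'est pas contenue dans $\gn$) donne $V_\cI=\{0\}$, donc $|K|=\cO(x_4^2\rho^{-1})$ partout, alors que $T_\sigma$ est un cercle avec $T_\sigma\cap N=\{\Id\}$, donc $\bar m=1<n$ : votre affirmation selon laquelle $\bar m<n$ entra\^{\i}ne la non-d\'ecroissance est donc fausse, et la troisi\`eme assertion n'est pas \'etablie par ce m\'ecanisme. Inversement, le rel\`evement naturel $\sigma(t)=(a_1t,\ldots,a_nt,0)$ de votre $\bar\sigma$ \og g\'en\'erique \fg{} a une composante nulle, donc $V_\cI\cong\bbH\ne\{0\}$ et la courbure ne d\'ecro\^{\i}t pas partout pr\`es de $(\hH_1\cap\hX_{\zeta})/N$, bien que $\bar m=n$ : tel quel, votre exemple g\'en\'erique n'\'etablit pas non plus la deuxi\`eme assertion. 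La correction est celle du papier : pour la deuxi\`eme assertion, prendre $T_\sigma=T^{n+1}$ (toutes les composantes non nulles et rationnellement ind\'ependantes, Exemple~\ref{dtn.40}), ce qui donne simultan\'ement $V_\cI=\{0\}$ et la dimension $3n$ ; pour la troisi\`eme, prendre $\sigma(\bbR)\subset\gt_\cI$ avec $\cI\ne\emptyset$ (Exemples~\ref{dtn.37} et \ref{dtn.37b}). Votre d\'ecompte des dimensions reste valable, \`a condition de choisir pour chaque $\bar m$ un rel\`evement sans composante nulle lorsqu'on veut aussi contr\^oler la d\'ecroissance de la courbure.
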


Pour démontrer le Théorème~\ref{int.4}, l'idée est d'effectuer une suite d'éclatements pour introduire une compactification par une variété à coins en suivant une approche combinant celles de \cite{CR,CR2023} et \cite{Melrose_London,DR}.  Dans le cas de la métrique euclidienne sur $\bbC^{2d}$, nos méthodes peuvent aussi être adaptées au cas des déformations de Taub-NUT d'ordre maximal.  En référant au Théorème~\ref{QFB.15} et à ses corollaires pour plus de détails, voici le résultat qu'on peut obtenir.
\begin{theoreme}
Soit $g_{\TN_\sigma}$ une déformation de Taub-NUT d'ordre $d$ de la métrique euclidienne $g_{\bbH^{d}}$ sur $\bbH^{d}$ spécifiée par une bijection linéaire $\sigma:\bbR^d\to\mathfrak{t}^d$.  Alors $g_{\TN_\sigma}$ est une métrique $\QFB$ polyhomogène ayant pour unique cône tangent à l'infini $\bbR^{3d}$ muni de la métrique euclidienne.  En particulier, $g_{\TN_\sigma}$ est à géométrie bornée et a une croissance du volume (à l'infini) d'ordre $3d$.  
\label{int.8}\end{theoreme}

En faisant varier $\sigma$, le Corollaire~\ref{QFB.17} ci-bas montre qu'on obtient une famille de dimension réelle $\frac{d(d+1)}2$ de métriques isométriquement distinctes.  Cette famille d'exemples contient notamment le produit cartésien de $d$ copies de la métrique de Taub-NUT $g_{\TN}$ sur $\bbC^2$ lorsque $\sigma$ correspond à l'isomorphisme canonique $\bbR^d=\mathfrak{t}^d$, mais aussi la métrique $L^2$ de l'espace de modules des monopôles centrés de type $(1,\ldots,1)$ par \cite{LWY1996,Murray1997} et \cite[Appendix]{Kraan}.  Comme les métriques du Théorème~\ref{int.8} sont $\QFB$ par rapport à la même structure de Lie à l'infini, on sait toutefois par \cite{ALN04} qu'elles sont toutes quasi-isométriques.  Suivant la stratégie ébauchée dans \cite[\S~7]{Gibbons1997}, cette observation permet de se ramener au cas $\sigma$ est l'isomorphisme canonique pour calculer assez facilement leur cohomologie $L^2$ réduite, donnant lieu au résultat suivant (Corollaire~\ref{QFB.18} ci-bas).
\begin{corollaire}
La cohomologie $L^2$ réduite des métriques du Théorème~\ref{int.8} est de dimension $1$ en degré $2d$ et est triviale pour les autres degrés. 
\label{int.9}\end{corollaire}
Nos méthodes  ne nous permettent pas toutefois comme dans \cite{HHM2004} ou \cite[\S~6]{KR2} de donner une interprétation en termes de cohomologie d'intersection.  Le cas particulier où le Corollaire~\ref{int.9} est appliqué à la  métrique $L^2$ de l'espace de modules des monopôles centrés de type $(1,1,\ldots,1)$ est important, car il donne lieu au résultat suivant.
\begin{corollaire}
La conjecture de $S$-dualité de Sen \cite{Sen, Gibbons1996} est valide pour les monopôles centrés de type $(1,1,\ldots,1)$.
\label{int.10}\end{corollaire}

Pour les déformations de Taub-NUT d'ordre intermédiaire, il n'est pas exclu que l'approche par compactification et éclatements développée dans cet article puisse à nouveau fonctionner, peut-être par exemple en élargissant la classe des métriques tordues quasi-feuilletées au bord.  Cependant, l'auteur n'entrevoit pas pour l'instant comment généraliser cette approche à des déformations de Taub-NUT pour des métriques hyperkählériennes toriques qui sont $\QAC$ sans être AC.  À la lumière de \cite{Min2}, on peut aussi se demander si l'approche développée ici pourrait servir à mieux comprendre la géométrie à l'infini des métriques de Calabi-Yau de \cite{Apostolov-Cifarelli}.  

L'article est organisé comme suit.  La section~\ref{tqfb.0} fait un rappel sur les métriques tordues $\QAC$ de \cite{CR,CR2023} et introduit les métriques tordues quasi-feuilletées au bord tout en dérivant certaines de leurs propriétés.  La section~\ref{qac.0} construit la compactification nécessaire pour établir le Théorème~\ref{int.2} et ses corollaires.  Dans la section~\ref{dtn.0}, on construit la compactification qui est ensuite utilisée pour démontrer le Théorème~\ref{int.4} et ses corollaires.  Enfin, dans la section~\ref{QFB.0}, on adapte cette approche aux déformations de Taub-NUT d'ordre maximal pour démontrer le Théorème~\ref{QFB.15} et ses corollaires, notamment la conjecture de Sen pour les monopôles centrés de type $(1,1,\ldots,1)$.    

\begin{remerciements}
L'auteur remercie Charles Cifarelli, Lorenzo Foscolo, Yuji Odaka et Andy Royston pour des discussions éclairantes en lien avec cet article.  Certaines de ces discussions se sont déroulées dans le cadre d'ateliers ayant eu lieu au \emph{Simons Laufer Mathematical Sciences Intstitute} et au \emph{Simons Center for Geometry and Physics}.  L'auteur tient à remercier ces deux instituts de recherche pour leur hospitalité.   Ce travail de recherche a été soutenu financièrement par une subvention à la découverte du CRSNG et une subvention de projet de recherche en équipe du FRQNT.
\end{remerciements}

\section{Métriques tordues quasi-feuilletées au bord}\label{tqfb.0}

Dans cette section, nous allons introduire la notion de métriques tordues quasi-feuilletées au bord intervenant dans notre résultat principal.  Pour y parvenir, il faudra dans un premier temps faire un récapitulatif des notions de métriques quasi-fibrées au bord \cite{CDR} et de métriques tordues quasi-asymptotiquement coniques     \cite{CR, CR2023}.  Ces notions sont formulées en termes d'une compactification par une variété à coins.  Pour une introduction et plus de détails sur la géométrie des variétés à coins, on réfère à \cite{Melrose1992, MelroseMWC, Grieser, Ammar}.

Soit donc $M$ une variété à coins.  Sauf mention contraire, on supposera qu'elle est compacte.  Dénotons par $\cM_1(M)$ l'ensemble des hypersurfaces bordantes, c'est-à-dire les coins de codimension $1$.  On supposera, comme dans \cite{Melrose1992}, que chacune des hypersurfaces bordantes de $M$ est plongée dans $M$.  Supposons de plus que chaque hypersurface bordante $H\in\cM_1(M)$ est munie d'un fibré $\phi_H: H\to S_H$ dont les fibres et la base $S_H$ sont des variétés à coins.  Dénotons par $\phi$ la collection de ces fibrés $\phi_H$.  

\begin{definition}[\cite{AM2011,ALMP2012,DLR}] On dit que $(M,\phi)$ est une \textbf{variété à coins fibrés}, ou de manière équivalente, que $\phi$ est une \textbf{structure de fibrés itérés} pour $M$, si $\cM_1(M)$ possède un ordre partiel tel que:
\begin{enumerate}
\item Tout sous-ensemble $\cI\subset \cM_1(M)$ tel que $\displaystyle \bigcap_{H\in \cI}H \ne \emptyset$ est totalement ordonné;
\item Si $H<G$, alors $H\cap G\ne \emptyset$ et l'application $\displaystyle \phi_H|_{H\cap G}: H\cap G\to S_H$ est une submersion surjective, $S_{GH}:=\phi_G(H\cap G)$ est l'une des hypersurfaces bordantes de $S_G$ et il existe une submersion surjective $\phi_{GH}: S_{GH}\to S_H$ telle que $\phi_{GH}\circ \phi_G=\phi_H$ sur $H\cap G$;
\item Pour $G\in \cM_1(M)$, les hypersurfaces bordantes de $S_G$ sont données par $S_{GH}$ pour $H<G$.
\end{enumerate}
\label{tqfb.1}\end{definition}

De cette définition, il découle automatiquement que la base $S_H$ et les fibres du fibré $\phi_H:H\to S_H$ sont naturellement elles aussi des variétés à coins fibrés.
Les variétés à coins fibrés sont intimement reliées à la notion d'espace stratifié.  Puisque ce lien jouera un rôle important dans cet article, prenons le temps de bien expliquer de quoi il s'agit.  
\begin{definition}
  Un \textbf{espace stratifié} de dimension $n$ est un espace métrisable localement séparable $X$ muni d'une \textbf{stratification}, c'est-à-dire d'une partition localement finie $\cS=\{s_i\}$ en sous-ensembles localement fermés de $X$ appelés \textbf{strates} qui sont des variétés lisses de dimension $\dim s_i\le n$ telles qu'au moins l'une des strates soit de dimension $n$ et 
  $$
      s_i\cap \overline{s_j}\ne \emptyset \quad \Longrightarrow \quad s_i\subset \overline{s}_j.
  $$ 
Lorsque $s_i\subset \overline{s}_j$, on écrit $s_i\le s_j$ et aussi $s_i<s_j$ si $s_i\ne s_j$, ce qui induit un ordre partiel sur l'ensemble des strates.  Une stratification induit une filtration
$$
   \emptyset \subset X_1\subset \cdots \subset X_n=X,
$$  
où $X_j$ est l'union des strates de dimension au plus $j$.  Les strates incluses dans $X\setminus X_{n-1}$ sont dites \textbf{régulières}, alors que celles incluses dans $X_{n-1}$ sont dites \textbf{singulières}.
\label{es.1}\end{definition}

Remarquons que la fermeture $\overline{s}_i$ d'une strate $s_i$ est elle-même naturellement un espace stratifié.  La \textbf{profondeur} d'un espace stratifié est le plus grand entier $k$ tel qu'on puisse trouver $k+1$ strates distinctes $s_1,\ldots,s_{k+1}$ telles que
$$
          s_1<\cdots <s_{k+1}.
$$
Comme indiqué dans \cite{AM2011,ALMP2012,DLR}, une variété à coins fibrés $(M,\phi)$ est automatiquement accompagnée d'un espace stratifié ${}^SM:=M/\sim$, où $\sim$ est la relation d'équivalence 
$$
   p\sim q\quad \Longleftrightarrow \quad p=q \; \mbox{ou} \;\exists \ H\in \cM_1(M) \; \mbox{telle que} \; p,q\in H \;\mbox{et} \; \phi_H(p)=\phi_H(q). 
$$
En termes de l'application $\beta: M\to {}^{S}M$ induite par passage au quotient, la variété à coins fibrés $M$ peut être vue comme une \textbf{résolution} de l'espace stratifié ${}^{S}M$ avec $\beta(M\setminus \pa M)$ correspondant à la strate régulière.  En fait, l'application $\beta$ induit une bijection entre les hypersurfaces bordantes de $M$ et les strates singulières de ${}^{S}M$. Plus précisément, à une hypersurface bordante $H\in\cM_1(M)$ correspond une strate $s_H$ de ${}^{S}M$ ayant pour fermeture $\overline{s}_H= \beta(H)$.  Dans cette correspondance, la base $S_H$ peut être vue, en tant que variété à coins fibrés, comme la résolution de l'espace stratifié $\overline{s}_H$.  On dira qu'un espace stratifié est de \textbf{Thom-Mather}  (aussi \emph{smoothly stratified} en anglais) s'il admet une résolution par une variété à coins fibrés $(M,\phi)$, c'est-à-dire s'il correspond à l'espace stratifié ${}^{S}M$ associé à $(M,\phi)$.  Ce ne sont pas tous les espaces stratifiés qui sont de Thom-Mather, mais tel que discuté dans \cite{ALMP2012,DLR}, la propriété d'être de Thom-Mather peut-être décrite intrinsèquement sur l'espace stratifié en termes de certaines données de contrôle.  

Dans cet article, tous les espaces stratifiés seront de Thom-Mather.  Ils interviendront dans la description d'objets singuliers tels que le cône tangent à l'infini de certaines variétés riemanniennes complètes, mais aussi dans la description d'une action lisse d'un groupe de Lie compact sur une variété à coins et son quotient.  En effet, supposons qu'une telle action d'un groupe $G$ sur une variété à coins $M$ soit \textbf{sans intersection au bord} au sens de \cite[Definition~1.4]{AM2011}, c'est-à-dire que pour tout $g\in G$ et pour toute hypersurface bordante $H\in\cM_1(M)$, 
\begin{equation}
g\cdot H\ne H \quad \Longrightarrow \quad (g\cdot H)\cap H=\emptyset.
\label{es.2}\end{equation}
Si de plus cette action est libre en au moins un point, on sait alors par \cite[Theorem~7.5]{AM2011}  que $M$ admet une stratification en termes des classes de conjugaison des stabilisateurs de cette action et que l'éclatement de ses strates (dans un ordre compatible avec l'ordre partiel de la stratification) définit une variété à coins $\widetilde{M}$ et une application de contraction (\emph{blow-down map} en anglais) $\beta: \widetilde{M}\to M$ telles que l'action de $G$ sur $M$ se relève en une action libre sur $\widetilde{M}$.  De plus, pour une telle résolution, le quotient $\widetilde{M}/G$ est naturellement une variété à coins et peut être vu comme une résolution du quotient $M/G$ en tant qu'espace stratifié de Thom-Mather.  Dans cet article, les actions d'un groupe $G$ sur une variété à coins seront toujours telles que $g\cdot H=H$ pour tout $g\in G$ et pour toute hypersurface bordante $H\in\cM_1(M)$, de sorte que l'action sera trivialement sans intersection au bord.  Si $\phi$ est une structure de fibrés itérés sur $M$, on dira qu'une telle action de $G$ sur $M$ est \textbf{compatible} avec la structure de fibrés itérés si pour toute hypersurface bordante $H\in\cM_1(M)$, il existe une action de $G$ sur $S_H$ à laquelle est associée une seule classe de conjugaison de stabilisateurs telle que le fibré $\phi_H: H\to S_H$ est $G$-équivariant.  Si de plus l'action de $G$ est libre, alors on peut vérifier dans ce cas que le quotient $M/G$ est naturellement une variété à coins fibrés.

 Pour introduire les classes de métriques qui interviendront dans cet article, nous aurons besoin de la notion suivante.     
\begin{definition}
Une \textbf{fonction bordante} de $H\in \cM_1(M)$, aussi dite \textbf{fonction de définition} de $H$, est une fonction $x_H\in\CI(M)$ telle que $x_H\ge 0$, $H=x_H^{-1}(0)$ et sa différentielle $dx_H$ est non nulle partout sur $H$.  Une telle fonction $x_H$ est dite \textbf{compatible} avec une structure de fibrés itérés $\phi$ si pour toute hypersurface bordante $G>H$, la restriction de $x_H$ à $G$ est constante le long des fibres du fibré $\phi_G: G\to S_G$.  Une \textbf{fonction bordante totale} est une fonction $v\in\CI(M)$ de la forme
$$
\displaystyle v=\prod_{H\in\cM_1(M)} x_H
$$ 
avec $x_H$ un choix de fonction bordante pour $H$.  
\label{tqfb.2}\end{definition}

Par \cite[Lemma~1.4]{DLR}, des fonctions bordantes compatibles existent toujours.  Si $p\in \pa M$ est situé à l'intérieur d'un coins $H_1\cap\cdots \cap H_k$ de codimension $k$ pour des hypersurfaces bordantes $H_1,\ldots, H_k\in \cM_1(M)$ étiquetées de sorte que $H_1<\cdots<H_k$,  alors pour $x_{i}$ un choix de fonction bordante compatible pour $H_i$, on sait par \cite[Lemma~1.10]{CDR} qu'il existe un voisinage de $p$ où chaque fibré $\phi_{H_i}: H_i\to S_{H_i}$ est trivial, ainsi que des fonctions $y_i=(y^1_i,\ldots,y^{\ell_i})$ pour $i\in\{1,\ldots,k\}$ et $z=(z_1,\ldots, z_k)$ telles que 
\begin{equation}
(x_1,y_1,\ldots,x_k,y_k,z)
\label{tqfb.4}\end{equation} 
constitue un système de coordonnées dans ce voisinage ayant la propriété que sur $H_i$, $(x_1,y_1,\ldots,x_{i-1},y_{i-1},y_i)$ induit un système de coordonnées sur $S_{H_i}$ par rapport auquel $\phi_{H_i}$ correspond à l'application 
$$
(x_1,y_1,\ldots, \widehat{x}_i,y_i,\ldots, x_k,y_k,z)\mapsto (x_1,y_1,\ldots,x_{i-1},y_{i-1},y_i),
$$
où \og$\widehat{}$ \fg \; au-dessus d'une variable dénote son omission.
\begin{definition}\emph{(\cite[Definition~2.3]{CR2023})} Une \textbf{fonction de pondération} pour une variété à coins fibrés $(M,\phi)$ est une application 
$$
        \begin{array}{llcl}
        \gp: & \cM_1(M) & \to & [0,1) \\
              & H & \mapsto & \nu_H
        \end{array}
$$
telle que pour toutes hypersurfaces bordantes $G,H\in \cM_1(M)$,
$$
 H<G \; \Longrightarrow \; \nu_H\le \nu_G.
$$
Si pour chaque hypersurface bordante $H\in \cM_1(M)$, $x_H\in \CI(M)$ est un choix de fonction bordante compatible, alors la \textbf{distance $\gp$-tordue} correspondante est la fonction
$$
    \rho:= \prod_{H\in\cM_1(M)} x_H^{-\frac{1}{1-\nu_H}}.
$$
De manière équivalente, on dira que $\rho^{-1}$ est une \textbf{fonction bordante totale $\gp$-pondérée}.
\label{tqfb.5}\end{definition}

En termes de l'algèbre de Lie  des \textbf{champs vectoriels bordants}  (ou \textbf{$b$-champs vectoriels}) sur $M$  donnée par
\begin{equation}
 \cV_b(M):= \{\xi\in \CI(M;TM)\; | \; \xi \; \mbox{est tangent à} \;H  \;\forall  H\in \cM_1(M)\},
\label{tqfb.6}\end{equation}
rappelons qu'un choix de fonction bordante totale $\gp$-pondérée induit une sous-algèbre de Lie de champs vectoriels bordants comme suit.
\begin{definition}\emph{(\cite[Definition~2.4]{CR2023})} Soit $\rho$ une distance $\gp$-tordue pour $(M,\phi)$.  Pour un tel choix, un \textbf{champ vectoriel quasi-fibré au bord $\gp$-pondéré} est un champ vectoriel bordant $\xi\in \cV_b(X)$ tel que:
\begin{enumerate}
\item $\xi|_H$ est tangent aux fibres de $\phi_H: H\to S_H$ pour chaque $H\in \cM_1(M)$;
\item $\xi \rho^{-1}\in v\rho^{-1}\cA_{\phg}(M)\cap L^{\infty}(M)$, où $v=\prod_{H\in\cM_1(M)}x_H$ est une fonction bordante totale et $\cA_{\phg}(M)$ dénote l'espace des fonctions polyhomogènes sur $M$ au sens de \cite[\S~4]{Melrose1992}.
\end{enumerate}
On dénote par $\cV_{\nQFB}(M)$ l'espace des champs vectoriels quasi-fibrés au bord $\gp$-pondérés.
\label{tqfb.7}\end{definition}

Lorsque $\gp$ est la fonction de pondération triviale donnée par $\gp(H)=0$ pour toute hypersurface bordante $H\in\cM_1(M)$, on a que $\cV_{\nQFB}(M)$ correspond à l'algèbre de Lie des champs vectoriels quasi-fibrés au bord ($\QFB$) introduite dans \cite{CDR}.  En fait, on obtient aussi cette algèbre de Lie de champs vectoriels lorsque $\gp$ est une fonction de pondération constante.    Tel qu'indiqué dans \cite{CR2023}, les conditions (1) et (2) de la Définition~\ref{tqfb.7} sont fermées par rapport au crochet de Lie, ce qui assure que $\cV_{\nQFB}(M)$ est bien une sous-algèbre de Lie de $\cV_b(M)$.  C'est aussi une sous-algèbre de Lie de l'algèbre de Lie $\cV_e(M)$ des champs vectoriels edges \cite{MazzeoEdge, AGR}, à savoir l'algèbre de Lie constituée des champs vectoriels bordants satisfaisant à la condition (1) de la Définition~\ref{tqfb.7}.

L'algèbre de Lie $\cV_{\nQFB}(M)$ dépend en général du choix de fonction bordante totale $\gp$-pondérée.  On dira que deux telles fonctions sont $\nQFB$-équivalentes si elles engendrent la même algèbre de Lie $\cV_{\nQFB}(M)$ de champs vectoriels quasi-fibrés au bord $\gp$-pondérés.  Par \cite[Lemma~2.6]{CR2023}, deux telles fonctions $\rho$ et $\rho'$ sont $\nQFB$-équivalentes lorsque la fonction 
$$
    f:= \log\lrp{\frac{\rho}{\rho'}}\in \cA_{\phg}(M)\cap L^{\infty}(M)
$$
est telle que près de chaque hypersurface bordante $H\in \cM_1(M)$, 
$$
    f= \phi_H^* f_H + \cO(x_H)
$$
pour une certaine fonction $f_H\in \cA_{\phg}(S_H)\cap L^{\infty}(S_H)$.  En termes des coordonnées \eqref{tqfb.4}, on sait par \cite[(2.4)]{CR2023} que $\cV_{\nQFB}(M)$ est localement engendrée par
\begin{multline}
v_1x_1\frac{\pa}{\pa x_1}, v_1\frac{\pa}{\pa y_1}, v_2\left((1-\nu_{H_2})x_2\frac{\pa}{\pa x_2}-(1-\nu_{H_1})x_1\frac{\pa}{\pa x_1}\right), v_2\frac{\pa}{\pa y_2}, \ldots, \\
v_k\left((1-\nu_{H_k})x_k\frac{\pa}{\pa x_k}-(1-\nu_{H_{k-1}})x_{k-1}\frac{\pa}{\pa x_{k-1}}\right), v_k\frac{\pa}{\pa y_k}, \frac{\pa}{\pa z}
\label{tqfb.8}\end{multline}
en tant que $\CI(M)$-module, où $v_i:= \prod_{j=i}^k x_j$ et où $\frac{\pa}{\pa y_i}$ et $\frac{\pa}{\pa z}$ dénotent respectivement $\left( \frac{\pa}{\pa y_i^1},\ldots, \frac{\pa}{\pa y_i^{\ell_i}}  \right)$ et $\left(\frac{\pa}{\pa z^1},\ldots,\frac{\pa}{\pa z^q}  \right)$.  Cette description locale montre que $\cV_{\nQFB}(M)$ est un faisceau localement libre de rang $m=\dim M$, donc par le théorème de Serre-Swan, il existe un fibré vectoriel lisse, le \textbf{fibré tangent quasi-fibré au bord $\gp$-pondéré}, dénoté ${}^{\gp}TM\to M$, et une application naturelle $\iota_{\gp}: {}^{\gp}TM\to TM$ se restreignant à un isomorphisme de fibrés vectoriels sur $M\setminus \pa M$ telle que 
\begin{equation}
 \cV_{\nQFB}(M)= (\iota_{\gp})_*\CI(M;{}^{\gp}TM).
\label{tqfb.9}\end{equation}
En $p\in M$, la fibre de ${}^{\gp}TM$ au-dessus de $p$ est donnée par 
$$
    {}^{\gp}T_pM= \cV_{\nQFB}(M)/\cI_p\cdot \cV_{\nQFB}(M),
$$
où $\cI_p$ est l'idéal des fonctions lisses s'annulant en $p$.  Le \textbf{fibré cotangent quasi-fibré au bord $\gp$-pondéré} est le fibré vectoriel ${}^{\gp}T^*M$ dual de ${}^{\gp}TM$.  Dans les coordonnées \eqref{tqfb.4}, une base locale de sections de ${}^{\gp}T^*M$ est donnée par
\begin{equation}
 \rho_1^{-\gp}d\rho_1, \frac{dy_1^1}{v_1}, \ldots, \frac{dy_1^{\ell_1}}{v_1}, \ldots, \rho_k^{-\gp}d\rho_k, \frac{dy_k^1}{v_k},\ldots, \frac{dy_k^{\ell_k}}{v_k}, dz^1,\ldots, dz^q,
\label{tqfb.9b}\end{equation}  
où 
$$
\rho_i= \prod_{j=i}^k x_j^{-\frac{1}{1-\nu_{H_j}}}\quad  \mbox{et} \quad \rho_i^{-\gp}=\prod_{j=i}^k x_j^{\frac{\nu_{H_j}}{1-\nu_{H_j}}}.
$$

L'identification \eqref{tqfb.9} induit une structure d'algèbre de Lie sur $\CI(M;{}^{\gp}TM)$ conférant à ${}^{\gp}TM\to M$ une structure d'algébroïde de Lie avec application d'ancrage $\iota_{\gp}$.  De plus, l'algèbre de Lie de champs vectoriels $\cV_{\nQFB}(M)$ induit une structure de Lie à l'infini sur $M\setminus \pa M$ au sens de \cite{ALN04}.  À cette structure de Lie à l'infini est associée une classe naturelle de métriques riemanniennes complètes sur $M\setminus \pa M$ comme suit.
\begin{definition}
Une \textbf{métrique $\gp$-pondérée quasi-fibrée au bord ($\nQFB$)}  est une métrique riemannienne sur $M\setminus \pa M$ de la forme 
$$
     g=\iota_{\gp}^* g_{\nQFB}
$$
avec $g_{\nQFB}\in\CI(M;S^2({}^{\gp}T^*M))$ un choix de métrique euclidienne pour le fibré ${}^{\gp}TM\to M$.  Lorsque cela ne porte pas à confusion, on dénotera aussi par $g_{\nQFB}$ la métrique riemannienne $g$ induite par $g_{\nQFB}$.
\label{tqfb.10}\end{definition}
\begin{remarque}
Dans la définition précédente, $g_{\nQFB}$ est une section de $S^{2}({}^{\gp}T^*M)$ lisse jusqu'au bord.  Toutefois, comme dans \cite[Remark~1.18]{CDR}, tout en restant dans la classe de quasi-isométrie de $g$, on peut plus généralement considérer des sections $g_{\nQFB}$ qui sont moins régulières au bord,  par exemple en ne demandant qu'un développement polyhomogène ou un contrôle uniforme des dérivées près du bord. 
\label{tqfb.11}\end{remarque}

Par les résultats de \cite{ALN04, Bui}, une métrique $\gp$-pondérée quasi-fibrée au bord est automatiquement complète à géométrie bornée.  Lorsque la variété à coins fibrés est telle que pour chaque hypersurface bordante maximale $H\in\cM_1(M)$, $S_H=H$ et $\phi_H:H\to S_H$ est l'application identité, on dit alors que $(M,\phi)$ est une variété à à coins fibrés \textbf{de type quasi-asymptotiquement conique} ($\QAC$), en quel cas on peut être plus précis en remplaçant \og quasi-fibré au bord \fg  \ par  \og quasi-asymptotiquement conique \fg \ dans les terminologies introduites jusqu'à présent, \eg une \textbf{métrique $\gp$-pondérée quasi-asymptotiquement conique} correspond à une métrique $\gp$-pondérée quasi-fibrée au bord lorsque que la variété à coins fibrés est de type $\QAC$ et on peut utiliser la notation $\cV_{\nQAC}(M)$ pour dénoter l'algèbre de Lie des champs vectoriels $\gp$-pondérés quasi-fibrés au bord $\cV_{\nQFB}(M)$.  Dans \cite{CR2023}, on s'était restreint à des variétés à coins fibrés de type $\QAC$ pour introduire la notion de métrique tordue $\QAC$, mais la définition a un sens aussi dans le cadre quasi-fibré.
\begin{remarque}
Si $M$ est de type $\QAC$, soit $x_{\max}\in\CI(M)$ le produit des fonctions bordantes associées aux hypersurfaces bordantes maximales.  En remplaçant la condition $(2)$ par $\xi \rho^{-1}\in \frac{v\rho^{-1}}{x_{\max}}\cA_{\phg}(M)\cap L^{\infty}(M)$ dans la Définition~\ref{tqfb.7}, on obtient alors l'algèbre de Lie $\cV_{\nQb}(M)$ des champs vectoriels $\gp$-pondérés $\Qb$ de \cite{CR2023}.  Cela définit une structure de Lie à l'infini et une classe de métriques associée, à savoir la classe des métriques $\gp$-pondérées $\Qb$ de \cite[Definition~2.13]{CR2023}.  Une telle métrique $g_{\nQb}$ est de la forme 
$$
   g_{\nQb}=x_{\max}^2g_{\nQAC}
$$    
pour une métrique $\gp$-pondérée $\QAC$ $g_{\nQAC}$.
\label{tqfb.12a}\end{remarque}
\begin{remarque}
De même, l'algèbre de Lie $\cV_e(M)$ des champs vectoriels edges induit une structure de Lie à l'infini sur $M\setminus \pa M$ à laquelle est associée la classe des métriques edges de Mazzeo \cite{MazzeoEdge}.  Par \cite{ALN04,Bui}, les métriques edges sont automatiquement complètes à géométrie bornée.  La classe des métriques edges permet aussi de définir celle des métriques wedges.  Par définition, une \textbf{métrique wedge} est une métrique riemannienne $g_w$ sur $M\setminus \pa M$ de la forme
\begin{equation}
       g_w= v^2 g_e
\label{edge.2}\end{equation} 
pour $g_e$ une métrique edge.  Contrairement aux métriques edges, les métriques wedges ne sont typiquement pas géodésiquement complètes et leur courbure n'est bien souvent pas bornée.  On réfère à \cite{ALMP2012,AGR,KR1} pour plus de détails sur les métriques wedges.  
\label{edge.1}\end{remarque}

Comme dans \cite[(1.19)]{KR1} pour les métriques $\QFB$,  il est utile dans les coordonnées \eqref{tqfb.4} de remplacer la $i$ ième coordonnée $x_i$ par $u_i:=\rho^{-1}=\prod_{j=1}^k x_j^{\frac{1}{1-\nu_{H_j}}}$, en quel cas la base locale de champs vectoriels \eqref{tqfb.8} prend la forme
\begin{equation}
\begin{gathered}
v_1\lrp{x_1\frac{\pa}{\pa x_1}+\frac{u_i}{1-\nu_{H_1}}\frac{\pa}{\pa u_i}}, v_1\frac{\pa}{\pa y_1}, v_2\left((1-\nu_{H_2})x_2\frac{\pa}{\pa x_2}-(1-\nu_{H_1})x_1\frac{\pa}{\pa x_1}\right), v_2\frac{\pa}{\pa y_2}, \ldots, \\v_{i-1}\left((1-\nu_{H_{i-1}})x_{i-1}\frac{\pa}{\pa x_{i-1}}-(1-\nu_{H_{i-2}})x_{i-2}\frac{\pa}{\pa x_{i-2}}\right), v_{i-1}\frac{\pa}{\pa y_{i-1}},\\
-(1-\nu_{H_{i-1}})v_{i-1}\frac{\pa}{\pa x_{i-1}}, v_i\frac{\pa}{\pa y_i}, (1-\nu_{H_{i+1}})v_{i+1}x_{i+1}\frac{\pa}{\pa x_{i+1}}, v_{i+1}\frac{\pa}{\pa y_{i+1}}, \\
v_{i+2}\left((1-\nu_{H_{i+2}})x_{i+2}\frac{\pa}{\pa x_{i+2}}-(1-\nu_{H_{i+1}})x_{i+1}\frac{\pa}{\pa x_{i+1}}\right), v_{i+2}\frac{\pa}{\pa y_{i+2}}, \ldots, \\
v_k\left((1-\nu_{H_k})x_k\frac{\pa}{\pa x_k}-(1-\nu_{H_{k-1}})x_{k-1}\frac{\pa}{\pa x_{k-1}}\right), v_k\frac{\pa}{\pa y_k}, \frac{\pa}{\pa z}.
\end{gathered}
\label{tqfb.11b}\end{equation}
En faisant un changement de base par rapport à $\CI(M)$, ces champs vectoriels sont équivalents à
\begin{equation}
\begin{gathered}
v_1u_i\frac{\pa}{\pa u_i}, v_1\frac{\pa}{\pa x_1}, v_1\frac{\pa}{\pa y_1}, \ldots, v_{i-1}\frac{\pa}{\pa x_{i-1}}, v_{i-1}\frac{\pa}{\pa y_{i-1}}, v_{i}\frac{\pa}{\pa y_{i}},v_{i+1}x_{i+1}\frac{\pa}{\pa x_{i+1}}, v_{i+1}\frac{\pa}{\pa y_{i+1}},\\
v_{i+2}\left((1-\nu_{H_{i+2}})x_{i+2}\frac{\pa}{\pa x_{i+2}}-(1-\nu_{H_{i+1}})x_{i+1}\frac{\pa}{\pa x_{i+1}}\right), v_{i+2}\frac{\pa}{\pa y_{i+2}}, \ldots, \\
v_k\left((1-\nu_{H_k})x_k\frac{\pa}{\pa x_k}-(1-\nu_{H_{k-1}})x_{k-1}\frac{\pa}{\pa x_{k-1}}\right), v_k\frac{\pa}{\pa y_k}, \frac{\pa}{\pa z}.
\end{gathered}
\label{tqfb.11c}\end{equation}
En particulier, sur $H_i$, les champs vectoriels horizontaux correspondent à des champs vectoriels edges, alors que les champs vectoriels verticaux sont des champs vectoriels $\nQFB$.  Près de $H_i$, cela suggère de décrire les métriques $\nQFB$ en termes des hypersurfaces de niveaux d'une fonction bordante totale.  Pour ce faire, considérons comme dans \cite[(1.15)]{KR1} l'ensemble 
\begin{equation}
 \cU_i= \left\{ (p,c)\in H_i\times [0,\epsilon) \; | \; \lrp{\prod_{H\in \cM_1(M), H\ne H_i} x_H(p)^{\frac{1}{1-\nu_H}}}> \frac{c}{\epsilon^{\frac{1}{1-\nu_{H_i}}}} \right\} \subset H_i\times [0,\epsilon)
\label{tqfb.11d}\end{equation} 
et le difféomorphisme 
\begin{equation}
\begin{array}{lccc}
  \Psi_i: & (H_i\setminus \pa H_i)\times [0,\epsilon) & \to & \cU_i\\
              & (p,t) & \mapsto & (p, t^{\frac{1}{1-\nu_{H_i}}}\prod_{H\in \cM_1(M), H\ne H_i}x_H(p)^{\frac{1}{1-\nu_H}}).
\end{array}
\label{tqfb.11e}\end{equation}
Sur $\cU_i$ vu comme un ouvert de $H_i\times [0,\epsilon)$, soient $\widetilde{\pr}_1$ et $\widetilde{\pr_2}$ les restrictions des projections $H_i\times [0,\epsilon)\to H_i$ et $H_i\times [0,\epsilon)\to [0,\epsilon)$, de sorte qu'un choix de connexion pour le fibré $\phi_{H_i}$ induit une inclusion de fibrés vectoriels 
\begin{equation}
       T^*(H_i/S_{H_i})|_{\cU_i}\hookrightarrow T^*M|_{\cU_i}.
\label{tqfb.11f}\end{equation}
En utilisant cette inclusion, un exemple de métrique $\nQFB$ sur $\cU_i$ est donné par
\begin{equation}
  \frac{1}{v^2}\left(\frac{du_i^2}{u_i^2}+ \widetilde{\pr}_1^*\phi_{H_i}^*g_{S_{H_i}}\right)+ \widetilde{\pr}_1^*\kappa,
\label{tqfb.11g}\end{equation}
où  $g_{S_{H_i}}$ est une métrique wedge comme dans \eqref{edge.2} et $\kappa\in \CI(H_i\setminus \pa H_i; S^2(T^*(H_i/S_{H_i})))$ induit une métrique $\nQFB$ sur chaque fibre de $\phi_{H_i}$ par rapport à la fonction de pondération canoniquement induite par $\gp$.  Une telle métrique $\nQFB$ \eqref{tqfb.11g} est dite de \textbf{type produit} près de $H_i$.  Plus généralement, une métrique $\nQFB$ est dite \textbf{exacte} si pour chaque hypersurface bordante $H\in\cM_1(M)$, elle est de type produit près de $H$ à un terme de $x_H\CI(M;S^2({}^{\nQFB}T^*M))$ près.  Lorsque la fonction de pondération $\gp$ est triviale, cela coïncide avec la notion de métrique $\QFB$ exacte de \cite{KR1}.  Si de plus $M$ est une variété à bord et le fibré sur $\pa M$ est l'application identité, une telle métrique correspond à un exemple de métrique \textbf{asymptotiquement conique} ($\AC$).

\begin{definition}\emph{(\cite[Definition~2.9]{CR2023})}
Soient $(M,\phi)$ une variété à coins fibrés, $\gp$ une fonction de pondération et $\rho$ un choix de distance $\gp$-tordue.  Dans ce cas, une \textbf{métrique $\gp$-tordue quasi-fibrée au bord} est une métrique riemannienne $g_w$ sur $M\setminus \pa M$ de la forme 
$$
       g_\cT= \rho^{2\gp}g_{\nQFB}, \quad \mbox{où} \quad \rho^{2\gp}:= \prod_{H\in \cM_1(M)} x_H^{\frac{-2\nu_H}{1-\nu_H}}
$$
et $g_{\nQFB}$ est une métrique $\gp$-pondérée quasi-fibrée au bord.
\label{tqfb.12}\end{definition}
\begin{remarque}
Comme les métriques $\gp$-pondérées sont à géométrie bornée, on déduit de \cite[Theorem~1.159]{Besse} que le tenseur de Riemann $\Rm_\cT$ d'une métrique $\gp$-tordue quasi-fibrée au bord $g_\cT$ est tel que
$$
    |\Rm_\cT|_{g_\cT}=\cO(\rho^{-2\gp}).
$$
De plus, si $M$ est de type $\QAC$, alors par la Remarque~\ref{tqfb.12a}, $g_\cT=\frac{\rho^{2\gp}}{x_{\max}^2}g_{\nQb}$ pour une métrique $\gp$-pondérée $\Qb$ et \cite[Theorem~1.159]{Besse} montre dans ce cas que  
$$
|\Rm_\cT|_{g_\cT}=\cO(\rho^{-2\gp}x_{\max}^2),
$$
puisque par \cite{ALN04,Bui}, les métriques $\gp$-pondérées $\Qb$ sont à géométrie bornée.
\label{tqfb.12b}\end{remarque}

Dans les coordonnées \eqref{tqfb.4} et la base locale de sections \eqref{tqfb.9b}, un exemple de métrique $\gp$-tordue quasi-fibrée au bord est une métrique riemannienne $g_\cT$ sur $M\setminus \pa M$ de la forme 
$$
         g_\cT= \sum_{i=1}^k \rho^{2\gp}\left( \rho_i^{-2\gp}d\rho_i^2+ \sum_{j=1}^{\ell_i}\frac{(dy_i^j)^2}{v_i^2} \right) + \rho^{2\gp} \sum_{j=1}^q d(z^j)^2,
$$
où $ \rho_i^{2\gp}= (\rho_i^{\gp})^2$.
Près d'une hypersurface bordante $H_i$ et en termes de \eqref{tqfb.11g}, un autre exemple de métrique $\gp$-tordue quasi-fibrée au bord est donné par
\begin{equation}
 \frac{du_i^2}{u_i^4}+ \frac{\widetilde{\pr}_1^*\phi_{H_i}^*g_{S_{H_i}}}{u_i^2}+ \rho^{2\gp} \widetilde{\pr}_1^*\kappa =  \frac{du_i^2}{u_i^4}+ \frac{\widetilde{\pr}_1^*\phi_{H_i}^*g_{S_{H_i}}}{u_i^2}+ \frac{\rho^{2\gp}}{\rho^{2\gp}_i} \widetilde{\pr}_1^*\kappa_\cT,
\label{tqfb.12c}\end{equation}
où $\kappa_\cT\in \CI(H_i\setminus \pa H_i; S^2(T^*(H_i/S_{H_i})))$ induit une métrique $\gp$-tordue quasi-fibrée au bord sur chaque fibre de $\phi_{H_i}$ par rapport à la fonction de pondération canoniquement induite par $\gp$.  On dit qu'une métrique $\gp$-tordue $\QFB$ de la forme \eqref{tqfb.12c} est de \textbf{type produit} près de $H_i$.

 En particulier, cette description locale d'une métrique $\gp$-tordue $\QFB$ permet de donner une définition itérative des métriques $\gp$-tordues $\QFB$ comme dans \cite{DM2018} pour les métriques quasi-asymptotiquement coniques.  Comme les métriques $\gp$-tordues $\QFB$ sont conformément reliées à des métriques $\gp$-pondérées $\QFB$, on peut leur associer un fibré vectoriel ${}^{\cT}TM\to M$ donné par 
$$
    {}^{\cT}TM:= (\rho^{-\gp})({}^{\gp}TM).
$$
On dénotera par ${}^{\cT}T^*M$ le fibré vectoriel dual de ${}^{\cT}TM$.   En termes de ce fibré, on dira qu'une métrique $\gp$-tordue $\QFB$ est \textbf{exacte} si pour chaque hypersurface bordante $H\in \cM_1(M)$, elle est de type produit près de $H$ à un terme de $x_H\CI(M;S^2({}^{\cT}T^*M))$ près.

Lorsque la fonction de pondération est triviale, ${}^{\cT}TM$  correspond au fibré tangent $\QFB$ et possède donc une structure naturelle d'algébroïde de Lie.  Il y a une autre situation où ${}^{\cT}TM$ est naturellement un algébroïde de Lie, à savoir le cas où la fonction de pondération est à valeurs dans $\{0,\frac12\}$.  En effet, dans ce cas, 
$$
     \rho^{-\gp}= v_{\frac12}:= \prod_{H, \nu_H=\frac12} x_H
$$ 
est une fonction lisse jusqu'au bord, de sorte que $\iota_{\gp}$ induit une application d'ancrage 
$$
      \begin{array}{llcl}
       \iota_{\cT}: & {}^{\cT}TM & \to & TM \\
       & v_{\frac12}\xi &\mapsto & v_{\frac12}\iota_{\gp}(\xi)
      \end{array}
$$
et une identification
$$
     (\iota_{\cT})_*:\CI(M;{}^{\cT}TM)\tilde{\longrightarrow} v_{\frac12}\cV_{\nQFB}(M) \subset \cV_{\nQFB}(M).
$$
Comme $v_{\frac12}\cV_{\nQFB}(M)$ est clairement une sous-algèbre de Lie de $\cV_{\nQFB}(M)$, cela confère à ${}^{\cT}TM$ une structure d'algébroïde de Lie.  En fait, l'algèbre de Lie de champs vectoriels $v_{\frac12}\cV_{\nQFB}(M)$ induit une structure de Lie à l'infini sur $M\setminus\pa M$ et les métriques $\gp$-tordues $\QFB$ constituent précisément la classe de métriques associées à cette structure de Lie à l'infini.  À travers $\cV_{\nQFB}(M)$, l'algèbre de Lie  $v_{\frac12}\cV_{\nQFB}(M)$ dépend évidemment du choix de distance $\gp$-tordue, mais elle ne dépend pas du choix de la fonction $v_{\frac12}$ comme on peut le vérifier aisément.  Profitons-en pour mentionner le pendant de la Remarque~\ref{tqfb.12b} en termes d'algébroïdes de Lie.
\begin{lemme}
Si la fonction de pondération $\gp$ est à valeurs dans $\{0,\frac12\}$, alors 
$$
    [v_{\frac12}\cV_{\nQFB}(M),v_{\frac12}\cV_{\nQFB}(M)]\subset v_{\frac12}(v_{\frac12}\cV_{\nQFB}(M)).
$$
De plus, si $M$ est de type $\QAC$, alors
$$
 [v_{\frac12}\cV_{\nQFB}(M),v_{\frac12}\cV_{\nQFB}(M)]\subset x_{\max}v_{\frac12}(v_{\frac12}\cV_{\nQFB}(M)).
$$
\label{tqfb.12d}\end{lemme} 
\begin{proof}
Puisque  
$$
 [\cV_{\nQFB}(M),\cV_{\nQFB}(M)]\subset \cV_{\nQFB}(M),
$$
le résultat découle du fait qu'un champ vectoriel $\xi \in \cV_{\nQFB}(M)$ est en particulier un champ vectoriel bordant, donc que
$$
    \xi v_{\frac12}\in v_{\frac12}\CI(M).
$$
Si $M$ est de type $\QAC$, alors en termes de la Remarque~\ref{tqfb.12}, 
$$
   \cV_{\nQFB}(M)=x_{\max}\cV_{\nQb}(M),
$$
où $\cV_{\nQb}(M)$ est l'algèbre de Lie des champs vectoriels $\gp$-pondérés $\Qb$.  Comme un champ vectoriel $\xi\in\cV_{\nQb}(M)$ est en particulier un champ vectoriel bordant, on aura que
$$
     \xi (x_{\max} v_{\frac12})\in (x_{\max}v_{\frac12})\CI(M),
$$ 
donc le résultat découle du fait que 
$$
 [\cV_{\nQb}(M),\cV_{\nQb}(M)]\subset \cV_{\nQb}(M).
$$

\end{proof}

Dans cet article, on ne considérera que des fonctions de pondération à valeurs dans $\{0,\frac12\}$.  Pour une telle fonction de pondération, la classe des métriques $\gp$-tordues $\QFB$ n'est toutefois pas celle dont on aura besoin.  En effet, il faudra en général prendre en compte un feuilletage à l'infini.  Dans nos exemples, les feuilles de ce feuilletage correspondront aux orbites d'une action de $\bbR$ sur $M$.

Pour introduire la classe de métriques centrale à cet article,  soit $(M,\phi)$ une variété à coins fibrés munie d'une fonction de pondération $\gp$ à valeurs dans $\{0,\frac12\}$.
\begin{definition}
 Un \textbf{$(\phi,\gp)$-feuilletage} sur la variété à coins fibrés $(M,\phi)$ est un feuilletage $\cF$ tel que pour toute hypersurface bordante $H$ de $M$, $T\cF|_H\subset TH$ de sorte que:
 \begin{enumerate} 
 \item La base $S_H$ est munie d'un feuilletage $\cF_H$ tel que $T_{\phi_H(p)}\cF_H= (\phi_H)_*(T_p\cF)$ pour tout $p\in H$;
 \item Si $\gp(H)=0$, $\cF_H$ est le feuilletage sur $S_H$ ayant pour feuilles les points de $S_H$, c'est-à-dire que les feuilles de $\cF|_H$ sont tangentes aux fibres de $\phi_H:H\to S_H$.
 \end{enumerate}
\label{feuille.5}\end{definition} 

 Supposons donc que la variété à coins fibrés $(M,\phi)$ soit munie d'un $(\phi,\gp)$-feuilletage. Pour $p\in M$, soit $\cU\subset M$ un voisinage ouvert de $p$ admettant une carte locale du feuilletage $\cF$.  Dans ce cas, il existe un fibré $\psi: \cU\to \cV$  tel que  $T\cF|_{\cU}=T(\cU/\cV$), c'est-à-dire que dans $\cU$, les feuilles locales du feuilletage $\cF$ correspondent aux fibres du fibré $\psi$.  Le voisinage $\cU$ est une variété à coins non compacte.  Sans perte de généralité, en prenant $\cU$ plus petit au besoin, on peut supposer que les seules hypersurfaces bordantes $H$ de $M$ telles que $H\cap \cU\ne \emptyset$ sont celles contenant $p$.  Dans ce cas, si $\cM_1(\cU)$ dénote le sous-ensemble des hypersurfaces bordantes de $M$ contenant $p$, alors le fibré $\psi$ induit une bijection 
\begin{equation}
    \psi_{\#}: \cM_1(\cU)\to \cM_1(\cV).
\label{feuille.1a}\end{equation}
Pour $H\in\cM_1(\cU)$, remarquons que la restriction de $\psi$ à $H\cap \cU$ induit un fibré $\psi: H\cap \cU\to B_{H}$ avec $B_H$ une hypersurface bordante de $\cV$.  Nos hypothèses sur le feuilletage $\cF$ nous assurent qu'il existe une variété à coins $C_H$ et des fibrés $\psi_H: S_{H,\cU} \to C_H$ sur $S_{H,\cU}:=\phi_H(H\cap\cU)$ et $\phi_{\cV,H}: B_{H}\to C_H$ induisant un diagramme commutatif
\begin{equation}
\xymatrix{
  H\cap \cU \ar[d]^{\psi}\ar[r]^{\phi_H} & S_{H,\cU} \ar[d]^{\psi_H} \\
   B_H \ar[r]^{\phi_{\cV,H}} & C_H.
}
\label{feuille.1}\end{equation}  
\begin{lemme}
Les fibrés $\phi_{\cV,H}: B_H\to C_H$  engendrent une structure de fibrés itérés $\phi_{\cV}$ pour $\cV$.
\label{feuille.2}\end{lemme}
\begin{proof}
Les hypersurfaces bordantes de $\cV$ sont de la forme $B_H$ pour $H$ une hypersurface bordante de $M$ contenant $p$.  On peut donc prendre comme ordre partiel sur $\cM_1(\cV)$ celui induit par l'ordre partiel de $\cM_1(M)$.  Si $G,H\in \cM_1(M)$ sont telles que $p\in G\cap H$ et $H<G$, alors posons 
$$
C_{GH}:= \psi_G(S_{GH,\cU}), \quad \mbox{où} \quad S_{GH,\cU}:=S_{GH}\cap \phi_G(G\cap \cU).
$$
Par nos hypothèses sur le feuilletage $\cF$, $C_{GH}$ est l'une des hypersurfaces bordantes de $C_G$ et toutes les hypersurfaces bordantes de $C_G$ sont de cette forme.  Clairement, il existe un unique fibré $\phi_{GH,\cF}: C_{GH}\to C_H$ induisant le diagramme commutatif
\begin{equation}
\xymatrix{
  H\cap G\cap \cU \ar[rrr]^{\psi} \ar[rd]^{\phi_G}\ar[dd]^{\phi_H} &&& B_H\cap B_G \ar[ld]^{\phi_{G,\cF}}\ar[dd]^{\phi_{H,\cF}} \\
  & S_{GH,\cU}\ar[ld]^{\phi_{GH}}\ar[r]^{\psi_G} & C_{GH} \ar[rd]^{\phi_{GH,\cF}} & \\
  S_{H,\cU}\ar[rrr]^{\psi_H} &&& C_H.  
}
\label{feuille.3}\end{equation}
Cela confirme que les fibrés $\phi_{\cV,H}$ induisent bien une structure de fibrés itérés sur $\cV$.
\end{proof}

Par le Lemme~\ref{feuille.2}, $\gp\circ \psi_{\#}^{-1}$ est une fonction de pondération pour $(\cV,\phi_{\cV})$.  Supposons maintenant qu'il existe une distance $\gp$-tordue $\rho$ qui soit constante le long des feuilles de $\cF$.  En particulier, sur $\cU$, $\rho$ descend pour définir sur $\cV$ une distance $\gp\circ\psi_{\#}^{-1}$-tordue $\rho_{\cV}$.  Par la discussion précédente, cela induit un algébroïde de Lie ${}^{\cT}T\cV$ sur $\cV$.  Soit ${}^{\cT}T^*\cV$ le fibré dual et posons
$$
     {}^{\cT}T^*(\cU/\cF):= \psi^* {}^{\cT}T^*\cV. 
$$    
Comme la distance $\gp$-tordue est définie globalement et est constante le long des feuilles de $\cF$, on voit plus généralement qu'on peut définir un fibré vectoriel ${}^{\cT}T^*(M/\cF)\to M$ correspondant à ${}^{\cT}T^*(\cU/\cF)$ sur  un ouvert $\cU$ admettant une carte locale du feuilletage $\cF$ comme considéré ci-haut.  En utilisant l'application d'ancrage de ${}^{\cT}T\cV$, remarquons que sur $M\setminus \pa M$, la restriction du fibré vectoriel ${}^{\cT}T^*(M/\cF)$ s'identifie canoniquement avec la restriction du fibré $T^*(M/\cF)$, lui-même canoniquement un sous-fibré vectoriel du fibré cotangent $T^*M$.

\begin{definition}
Pour $(M,\phi)$ une variété à coins fibrés, $\gp$ une fonction de pondération à valeurs dans $\{0,\frac12\}$, $\cF$ un $(\phi,\gp)$-feuilletage et $\rho$ une distance $\gp$-tordue constante le long des feuilles de $\cF$, \textbf{l'espace des champs vectoriels $(\cF,\gp)$-tordus quasi-feuilletés au bord}, dénoté $\cV_{\cF,\cT}(M)$, est constitué des champs vectoriels bordants $\xi$ tels que
\begin{enumerate}
\item $\xi|_H\in\CI(H;T\cF|_H)$ pour toute hypersurface bordante $H\in\cM_1(M)$ telle que $\gp(H)=\frac12$;
\item $\eta(\xi)\in \CI(M)$ pour tout $\eta\in \CI(M;{}^{\cT}T^*(M/\cF))$;
\end{enumerate} 
où la deuxième condition signifie que $\eta(\xi)$, initialement défini sur $M\setminus \pa M$, se prolonge en une fonction lisse sur $M$.
\label{tqfb.13b}\end{definition}
Vérifions que ce sous-espace est bien  une sous-algèbre de Lie de $\cV_b(M)$.
\begin{lemme}
Le sous-espace $\cV_{\cF,\cT}(M)$ est une sous-algèbre de Lie de $\cV_b(M)$.
\label{tqfb.13c}\end{lemme}
\begin{proof}
Clairement, la condition (1) de la Définition~\ref{tqfb.13b} est fermée par rapport au crochet de Lie.  Pour la deuxième condition, remarquons que si $\xi_1,\xi_2\in \cV_{\cF,\cT}(M)$ et $\eta\in \CI(M;{}^{\cT}T^*(M/\cF))$, alors par la formule de Cartan,
$$
    \eta([\xi_1,\xi_2])= -d\eta(\xi_1,\xi_2)+ \cL_{\xi_1}(\eta(\xi_2))-\cL_{\xi_2}(\eta(\xi_1)).
$$
Par hypothèse, $\eta(\xi_1),\eta(\xi_2)\in\CI(M)$, donc puisque $\xi_1$ et $\xi_2$ sont des champs vectoriels lisses, on a automatiquement que $\cL_{\xi_1}(\eta(\xi_2)),\cL_{\xi_2}(\eta(\xi_1))\in\CI(M)$.  Pour établir que $\eta([\xi_1,\xi_2])\in\CI(M)$, il suffit donc de montrer que $d\eta(\xi_1,\xi_2)\in\CI(M)$. Pour ce faire, il suffit d'établir ce résultat dans un ouvert $\cU$ de $M$ dans lequel les feuilles locales de $\cF$ correspondent aux fibres d'un fibré $\psi: \cU\to \cV$ comme ci-haut.  Or, dans les coordonnées \eqref{tqfb.4} sur $\cV$, une base locale de sections de ${}^{\cT}T^*\cV$ est donnée par
\begin{equation}
\frac{ \rho_1^{-\gp}d\rho_1}{v_{\frac12}}, \frac{dy_1^1}{v_{\frac12}v_1}, \ldots, \frac{dy_1^{\ell_1}}{v_{\frac12}v_1}, \ldots, \frac{\rho_k^{-\gp}d\rho_k}{v_{\frac12}}, \frac{dy_k^1}{v_{\frac12}v_k},\ldots, \frac{dy_k^{\ell_k}}{v_{\frac12}v_k}, \frac{dz^1}{v_{\frac12}},\ldots, \frac{dz^q}{v_{\frac12}}.
\label{tqfb.13d}\end{equation}  
En tirant en arrière sur $\cU\subset M$, cela montre que pour $\eta\in\CI(\cU;\psi^*({}^{\cT}T^*\cV))$, on a que
$$
   d\eta\in \CI(\cU;{}^{b}T^*M\wedge \psi^*({}^{\cT}T^*\cV)),
$$
où ${}^{b}T^*M$ est le fibré cotangent bordant, c'est-à-dire le dual du fibré tangent bordant ${}^{b}TM$ associé à l'algèbre de Lie $\cV_b(M)$ des champs vectoriels bordants de $M$.  Comme $\xi_1$ et $\xi_2$ sont en particulier des champs vectoriels bordants, on en déduit que $d\eta(\xi_1,\xi_2)\in\CI(\cU)$, d'où le résultat.
\end{proof}

Dans un ouvert $\cU$ de $M$ sur lequel les feuilles du feuilletage $\cF$ correspondent aux fibres d'un fibré $\psi:\cU\to \cV$, considérons des coordonnées \eqref{tqfb.4} sur $\cV$.  Si $w=(w^1,\ldots,w^p)$ sont des fonctions telles que 
\begin{equation}
  (x_1,y_1,\ldots,x_k,y_k,z,w)
\label{tqfb.13g}\end{equation}
est un système de coordonnées sur $M$, alors en tant que $\CI(M)$-module, $\cV_{\cF,\cT}(M)$ est localement engendré par les champs vectoriels 
\begin{multline}
v_{\frac12}v_1x_1\frac{\pa}{\pa x_1}, v_{\frac12}v_1\frac{\pa}{\pa y_1}, v_{\frac12}v_2\left((1-\nu_{H_2})x_2\frac{\pa}{\pa x_2}-(1-\nu_{H_1})x_1\frac{\pa}{\pa x_1}\right), v_{\frac12}v_2\frac{\pa}{\pa y_2}, \ldots, \\
v_{\frac12}v_k\left((1-\nu_{H_k})x_k\frac{\pa}{\pa x_k}-(1-\nu_{H_{k-1}})x_{k-1}\frac{\pa}{\pa x_{k-1}}\right), v_{\frac12}v_k\frac{\pa}{\pa y_k}, v_{\frac12}\frac{\pa}{\pa z}, \frac{\pa}{\pa w},
\label{tqfb.13e}\end{multline}
où $\frac{\pa}{\pa w}$ dénote $\left( \frac{\pa}{\pa w^1},\ldots, \frac{\pa}{\pa w^p} \right)$.  Cela montre que $\cV_{\cF,\cT}(M)$ correspond à l'espace des sections globales d'un faisceau localement libre de rang $m=\dim M$.  Par le théorème de Serre-Swan, il existe donc un fibré vectoriel ${}^{\cF,\cT}TM\to M$ et une application lisse $\iota_{\cF,\cT}: {}^{\cF,\cT}TM\to TM$ linéaire dans chaque fibre induisant une identification
$$
   \cV_{\cF,\cT}(M)= (\iota_{\cF,\cT})_*\CI(M;{}^{\cF,\cT}TM).
$$
En particulier, cette identification induit une structure d'algèbre de Lie sur $\CI(M;{}^{\cF,\cT}TM)$, conférant à ${}^{\cF,\cT}TM$ une structure d'algébroïde de Lie avec application d'ancrage $\iota_{\cF,\cT}$.  
\begin{remarque}
Dans un ouvert $\cW$ de $M$ tel que $\cW\cap v_{\frac12}^{-1}(0)=\emptyset$, c'est-à-dire un ouvert n'intersectant pas les hypersurfaces bordantes $H$ telles que $\gp(H)=\frac12$, la fonction de pondération devient triviale et  
$
    {}^{\cF,\cT}TM|_{\cW}={}^{\cT}TM|_{\cW}
$
par la condition $(2)$ de la Définition~\ref{feuille.5}.   Pour définir ${}^{\cF,\cT}TM$, il suffit donc de supposer en fait que le $(\phi,\gp)$-feuilletage $\cF$ n'est défini que dans un voisinage de $v_{\frac12}^{-1}(0)$.
\label{feuille.4}\end{remarque}

 \begin{remarque}
Pour les déformations de Taub-NUT d'ordre 1 de variétés hyperkählériennes toriques, le $\phi$-feuilletage $\cF$ aura pour feuilles les orbites d'une action de $\bbR$ sur $M$.  Cette action ne sera typiquement pas libre, mais elle le sera dans un voisinage de $v_{\frac12}^{-1}(0)$.  
\label{tqfb.16}\end{remarque}

Cela nous permet de formuler la définition principale de cette section.

\begin{definition}
Soient $(M,\phi)$ une variété à coins fibrés et $\gp: \cM_1(M)\to \{0,\frac12\}$ un choix de fonction de pondération.  Soient $\cF$ un $(\phi,\gp)$-feuilletage défini dans un voisinage de $v_{\frac12}^{-1}(0)$, $\rho$ un choix de distance $\gp$-tordue constante le long des feuilles de $\cF$ et ${}^{\cF,\cT}TM\to M$ le fibré associé.  Alors une \textbf{métrique $(\cF,\gp)$-tordue quasi-feuilletée au bord}  sur $M\setminus \pa M$ est une métrique riemannienne $g$ sur $M\setminus \pa M$ se prolongeant en une métrique euclidienne (lisse jusqu'au bord) pour le fibré ${}^{\cF,\cT}TM$ sur $M$.
\label{tqfb.15}\end{definition}
Par \cite{ALN04,Bui}, les métriques $(\cF,\gp)$-tordues quasi-feuilletées au bord sont automatiquement complètes à géométrie bornée.  Dans un ouvert $\cU$ de $M$ où les feuilles locales de $\cF$ correspondent aux fibres d'un fibré $\psi: \cU\to \cV$, un exemple de métrique $(\cF,\gp)$-tordue quasi-feuilletée au bord est donné par 
\begin{equation}
 g_{\psi,\cT} = \psi^{*}g_\cT + \kappa,
\label{tqfb.17}\end{equation}
où $g_\cT$ est une métrique $\gp$-tordue $\QFB$ sur $\cV$ et $\kappa\in\CI(\cU; S^2(T^*M))$ est un $2$-tenseur symétrique induisant une métrique riemannienne sur chaque fibre de $\psi$.  Les exemples de métriques $(\cF,\cT)$-tordues quasi-feuilletée au bord hyperkählériennes que nous allons considérer seront localement de cette forme.  

Les modèles locaux \eqref{tqfb.17} nous permettent d'estimer la croissance du volume de ces métriques.  Rappelons que la \textbf{croissance du volume} (à l'infini) d'une variété riemannienne complète $(M,g)$ est dite \textbf{d'ordre $k$} si pour tout $p\in M$, il existe une constante $C>0$ telle que 
$$
 \frac{r^{k}}{C} \le \vol_g(B_g(p,r)) \le Cr^{k} \quad \mbox{pour} \quad \forall \; r\ge 1,
$$
où $B_g(p,r)$ est la boule de rayon $r$ (mesuré avec $g$) centrée en $p$ et $\vol_g(B_g(p,r))$ est le volume de cette boule mesuré avec la forme volume de la métrique $g$.  
\begin{lemme}
Soient $(M,\phi)$ une variété à coins fibrés de type $\QAC$, $\gp: \cM_1(M)\to \{0,\frac12\}$ un choix de fonction de pondération et $\rho$ une distance $\gp$-tordue.  Alors les métriques $\gp$-tordues $\QAC$ associées ont une  croissance du volume d'ordre $\dim M$.  Si de plus $\cF$ est un $(\phi,\gp)$-feuilletage défini dans un voisinage de $v_{\frac12}^{-1}(0)$ tel que $\rho$ est constante le long de ses feuilles, alors les métriques $(\cF,\gp)$-tordue quasi-feuilletées au bord associées ont une croissance du volume d'ordre $\dim M-f$, où $f$ est la dimension des feuilles de $\cF$.
\label{vg.1}\end{lemme}
\begin{proof}
Pour les métriques $\gp$-tordues $\QAC$, cela découle de \cite[Corollary~3.8]{CR2023}.  Pour les métriques $(\cF,\gp)$-tordues quasi-feuilletées jusqu'au bord, on sait par \cite[Corollary~3.6]{ALN04} qu'elles sont toutes mutuellement quasi-isométriques, puisqu'elles correspondent à une classe de métriques associée à une structure de Lie à l'infini.  On peut donc utiliser les modèles locaux \eqref{tqfb.17} pour estimer la croissance de leurs volumes.  Or, dans ces modèles, on peut choisir $\kappa$ indépendant de $\rho$, si bien que le volume des fibres est indépendant de $\rho$.  La croissance du volume de ce modèle a donc le même ordre que celle de la métrique $g_\cT$ sur la base, qui par le résultat précédent à une croissance du volume d'ordre $\dim M-f$.    
\end{proof}

\section{Métriques hyperkählériennes toriques quasi-asymptotiquement coniques} \label{qac.0}

Soit $(M,g)$ une variété hyperkählérienne torique complète simplement connexe de type topologique fini.  Supposons qu'elle a une croissance maximale du volume, c'est-à-dire d'ordre $\dim M$ (l'ordre maximal pour une variété Ricci plate par l'inégalité de Bishop-Gromov).
Par la classification de Bielawski \cite[Theorem~1]{Bielawski}, c'est un quotient hyperkählérien de $\bbH^d$ par $T^{d-m}$ avec $m=\dim M\le d$, où $T^{d-m}$ est un sous-tore du tore diagonal maximal dans $\Sp(d)$.   En suivant l'approche de \cite{Melrose_London,DR}, le but de cette section sera de montrer que de telles métriques sont typiquement quasi-asymptotiquement coniques, c'est-à-dire quasi-fibrée au bord pour une variété à coins fibrés de type $\QAC$.  

Rappelons que l'espace vectoriel quaternionique $\bbH^d$ muni de sa métrique euclidienne canonique $g_{\bbH^d}$ est une variété hyperkählérienne plate avec structures complexes $I_1$, $I_2$ et $I_3$ induites par la la multiplication à gauche par $i,j$ et $k$.  En utilisant l'identification $\bbH^d=\bbC^d\times \bbC^d$ induite par 
$$
   \bbC^d\times \bbC^d\ni (z,w)\to z+wj= (z_1+w_1j, \ldots, z_d+w_dj)\in \bbH^d,
$$
les formes symplectiques $\omega_1$, $\omega_2$ et $\omega_3$ associées à $I_1$, $I_2$ et $I_3$ prennent la forme
\begin{equation}
\omega_1= \frac{\sqrt{-1}}2 \sum_{k=1}^d \left( dz_k\wedge d\overline{z}_k+ dw_k\wedge d\overline{w}_k \right) \quad \mbox{et} \quad
\omega_2+\sqrt{-1}\omega_3= \sum_{k=1}^d dz_k\wedge dw_k.
\label{qac.1}\end{equation}
Le sous-groupe de $\GL(4d,\bbR)$ préservant $g_{\bbH^d}$, $\omega_1$, $\omega_2$ et $\omega_3$ est dénoté $\Sp(d)$.  Sous l'identification 
\begin{equation}
 \Sp(d)\cong \{ A\in \mathbf{M}_d(\bbH)\; | \; A \overline{A}^t=\Id\},
\label{qac.2}\end{equation}
l'action de $\Sp(d)$ sur $\bbH^d$ est donnée par 
\begin{equation}
A\cdot q:= q  \overline{A}^t \quad \mbox{pour} \; q\in\bbH^d \; \mbox{et} \; A\in \Sp(d).
\label{qac.3}\end{equation}
Comme les structures complexes $I_1$, $I_2$ et $I_3$ sont induites par la multiplication à gauche par $i,j$ et $k$, celles-ci sont en particulier bien préservées par l'action de $\Sp(d)$ sur $\bbH^d$.  Sous l'identification \eqref{qac.2}, le tore diagonal maximal $T^d$ est donné par
$$
 T^d=\{ \diag(e^{it_1},\ldots, e^{it_d})\in \mathbf{M}_d(\bbC)\subset \mathbf{M}_d(\bbH) \;|\; t_1,\ldots, t_d\in\bbR\},
$$
où $\diag(e^{it_1},\ldots, e^{it_d})$ représente la matrice diagonale
$$
\begin{bmatrix}
    e^{it_1} & & 0\\
    & \ddots & \\
   0 & & e^{it_d}
  \end{bmatrix}.
$$
L'action de $T^d$ sur $\bbH^d=\bbC^d\times \bbC^d$ est donnée explicitement par
\begin{equation}
  \diag(e^{it_1},\ldots,e^{it_d})\cdot (z,w)= (e^{-it_1}z_1,\ldots, e^{-it_d}z_d, e^{it_1}w_1,\ldots, e^{it_d}w_d).
\label{action.1}\end{equation}
Si $\gt^d$ dénote l'algèbre de Lie de $T^d$, alors une \textbf{application moment hyperhamiltonienne} associée à l'action de $T^d$ sur $\bbH^d$ est une application $\mu=(\mu_1,\mu_2,\mu_3)$ avec $\mu_i: \bbH^d\to (\gt^d)^*$ telle que
$$
   d(\mu_i(t))= -\iota_{\xi_t}\omega_i \quad \forall \;t\in \gt^d,
$$
où $\xi_t$ est le champ vectoriel sur $\bbH^d$ correspondant à l'action infinitésimale engendrée par $t$.  Une telle application est bien définie à un élément de $\bbR^3\otimes \gt^d= \gt^d\oplus \gt^d\oplus \gt^d$ près. En demandant que cette application s'annule à l'origine et en utilisant  l'identification canonique $\gt^d\cong \bbR^d$ pour laquelle l'application exponentielle de $T^d$ prend la forme 
$$
 \begin{array}{lccc}
 \exp: & \gt^d & \to & T^d \\
         & (t_1,\ldots,t_d) & \mapsto & \diag(e^{it_1},\ldots, e^{it_d}),
 \end{array}
$$
on a  explicitement que
\begin{equation}
\mu_1= \frac12\sum_{k=1}^d \lrp{-|z_k|^2+ |w_k|^2}e^*_k \quad \mbox{et} \quad \mu_2+\sqrt{-1}\mu_3= \sum_{k=1}^d \sqrt{-1}z_kw_k e_k^*,
\label{am.1}\end{equation}
où $(e_1,\ldots,e_d)$ est la base canonique de $\bbR^d\cong \gt^d$ et $(e_1^*,\ldots, e_d^*)$ est sa base duale.  L'action de $T^d$ sur $\bbH^d$ induit une stratification de $\bbH^d$ en termes des stabilisateurs de cette action \cite{DK2000,AM2011}.  Plus précisément dans ce cas, pour $\cI\subset \{1,\ldots, d\}$, le sous-espace 
\begin{equation}
 V_{\cI}:= \{ (q_1,\ldots, q_d)\in \bbH^d\; | \; q_p=0 \; \mbox{pour} \; p\notin \cI\}
\label{qac.4}\end{equation}
est la strate (fermée) associée au stabilisateur 
\begin{equation}
 T_{\cI}:= \{ \diag(e^{it_1},\ldots, e^{it_d})\in T^d\; | \; t_p=0 \; \mbox{pour} \; p\in \cI\}.
\label{qac.4a}\end{equation}
Soient $\gt_{\cI}$ l'algèbre de Lie de $T_{\cI}$ et $\iota_{\cI}: \gt_{\cI}\hookrightarrow \gt^d$  l'inclusion naturelle.  Alors remarquons que 
\begin{equation}
V_{\cI}= (\iota_{\cI}^*\circ \mu)^{-1}(0), 
\label{qac.4b}\end{equation}
où $\iota_{\cI}^*: \bbR^3\otimes (\gt^d)^*\to \bbR^3\otimes \gt_{\cI}^*$ est l'application duale induite par $\iota_{\cI}$.

Soit maintenant $N$ un sous-tore de $T^d$ avec algèbre de Lie $\gn\subset \gt^d\cong \bbR^d$.  Alors l'application moment hyperhamiltonienne associée à l'action de $N$ sur $\bbH^d$ et s'annulant à l'origine est donnée par 
\begin{equation}
\mu_N=(\mu_{N,1},\mu_{N,2},\mu_{N,3})\quad \mbox{avec} \quad \mu_{N,j}=\iota_\gn^* \circ \mu_j,
\label{am.2}\end{equation}
 où $\iota_\gn^*: (\gt^d)^*\to \gn^*$ est l'application linéaire surjective duale à l'inclusion naturelle $\iota_\gn: \gn\to \gt^d$.

En termes d'algèbres de Lie, la suite exacte 
$$
\xymatrix{
 0\ar[r] & N \ar[r] & T^d\ar[r] & T^d/N \ar[r] & 0
}
$$
correspond à une suite exacte
\begin{equation}
        \xymatrix{
 0\ar[r] & \gn \ar[r]^{\iota_{\gn}} & \gt^d\ar[r]^{\beta} & \bbR^n \ar[r] & 0,
}
\label{qac.5}\end{equation}
où $n=d-\dim_{\bbR} N$ et $\beta$ est une application linéaire envoyant $e_i$ sur $u_i\in\bbZ^n$ avec $\{u_1,\ldots,u_d\}\subset \bbZ^n$ un ensemble primitif de $\bbZ^n$ engendrant $\bbR^n$.  L'algèbre de Lie du groupe quotient $T^d/N$ est donc identifiée avec $\bbR^n$ et 
$$
 T^d/N \cong T^n:= \bbR^n/\bbZ^n.
$$ 
Inversement, un tel ensemble primitif $\{u_1,\ldots, u_d\}$ de $\bbZ^n$ engendrant $\bbR^n$ définit un sous-tore $N$ de $T^d$.  La suite exacte duale de \eqref{qac.5} est donnée par
\begin{equation}
        \xymatrix{
 0\ar[r] & (\bbR^n)^* \ar[r]^{\beta^*} & (\gt^d)^*\ar[r]^{\iota_{\gn}^*} & \gn^* \ar[r] & 0,
}
\label{qac.6}\end{equation}
où $\beta^*(s)= \sum_{k=1}^d \langle s, u_k \rangle e_k^*$.  On fera l'hypothèse suivante sur le sous-ensemble $\{u_1,\ldots, u_d\}$ définissant l'application $\beta$.
\begin{hypothese}
Le sous-tore $N$ est induit par une suite exacte \eqref{qac.5} telle que tout sous-ensemble de $n$ vecteurs linéairement indépendants de $\{u_1,\ldots,u_d\}$ forme une base du réseau $\bbZ^n$.
\label{qac.7}\end{hypothese}

Pour $\zeta\in \bbR^3\otimes \gn^*$, on peut alors considérer le quotient hyperkählérien 
\begin{equation}
  M_{\zeta}:= \mu_N^{-1}(-\zeta)/N.  
\label{qac.8}\end{equation}
Dans \cite{BD}, un critère simple est donné pour déterminer lorsque ce quotient hyperkählérien est lisse.  Pour le formuler, soit $\tau=(\tau_1,\ldots,\tau_d)\in \bbR^3\otimes (\gt^d)^*\cong \bbR^3\otimes \bbR^d$ tel que $\iota^*_{\gn} \tau =\zeta$ et posons 
$$
    H_k=\{s\in\bbR^3\otimes (\bbR^n)^*\; |\; \langle s, u_k\rangle =\tau_k\} \subset \bbR^3\otimes \bbR^n
$$
pour $k\in\{1,\ldots,d\}$.
\begin{theoreme}\emph{(\cite[Theorem~3.2]{BD})}
Supposons que les hyperplans $H_k$ sont distincts.  Alors le quotient hyperkählérien $M_{\zeta}$ est lisse si et seulement si l'intersection de $n+1$ hyperplans distincts $H_k$ est toujours vide et si pour $n$ hyperplans distincts $H_{k_1},\ldots, H_{k_n}$ d'intersection non vide, l'ensemble $\{u_1,\ldots,u_n\}$ est une base du réseau $\bbZ^n$.  En particulier, sous l'Hypothèse~\ref{qac.7}, $M_{\zeta}$ sera lisse pour $\zeta\in \bbR^3\otimes \gn^*$ générique.
\label{qac.9}\end{theoreme}
\begin{remarque}
La définition des hyperplans $H_k$ dépend du choix de $\tau$, mais un autre choix de $\tau$ ne fait que déplacer tous les hyperplans par une même translation dans $\bbR^3\otimes (\bbR^n)^*$.  
\label{qac.10}\end{remarque}
\begin{remarque}
Sous l'Hypothèse~\ref{qac.7}, on sait par la partie (a) de la preuve de \cite[Theorem~3.2]{BD} qu'un sous-ensemble de $\{u_1,\ldots,u_d\}$ constitué de vecteurs linéairement indépendants est contenu dans une base du réseau $\bbZ^n$. 
\label{qac.11}\end{remarque}
\begin{remarque}
Lorsque l'Hypothèse~\ref{qac.7} n'est pas satisfaite, la preuve de \cite[Theorem~3.2]{BD} montre que le quotient hyperkählérien est singulier peu importe le choix de $\zeta$.
\label{qac.10b}\end{remarque}

Pour décrire la géométrie à l'infini du quotient hyperkählérien $M_{\zeta}$, nous allons introduire une compactification de $\bbH^d$ adaptée à l'action du sous-tore $N$.  D'abord, par \cite{DK2000, AM2011}, en tant que $N$-espace, $\bbH^d$ admet une stratification naturelle qui à chaque sous-groupe stabilisateur $G\subset N$ de l'action de $N$ associe la strate ouverte constituée des points de $\bbH^d$ ayant $G$ comme stabilisateur.  Or, par la description \eqref{qac.4a} des stabilisateurs de l'action de $T^d$, les stabilisateurs de l'action de $N$ sur $\bbH^d$ sont de la forme
$N_{\cI}=N\cap T_{\cI}$ avec strate fermée $V_{\cI}$ pour $\cI\subset \{1,\ldots,d\}$ tel que $N\cap T_{\cJ}\subsetneqq N\cap T_{\cI}$ pour tout $\cJ\subset \{1,\ldots,d\}$ tel que $\cI\subsetneqq \cJ$.  Si $\gn_{\cI}$ et $\gt_{\cI}$ dénotent les algèbres de Lie de $N_{\cI}$ et $T_{\cI}$ respectivement, alors l'analogue de la suite exacte \eqref{qac.5} pour le stabilisateur $N_{\cI}$ est la suite exacte
\begin{equation}
       \xymatrix{
 0\ar[r] & \gn_{\cI} \ar[r]^{\iota} & \gt_{\cI}\ar[r]^{\beta_{\cI}} & W_{\cI} \ar[r] & 0,
}
\label{qac.11b}\end{equation}
où $W_{\cI}:= \beta(\gt_{\cI})\subset \bbR^n$ et $\beta_{\cI}:= \beta|_{\gt_\cI}$.
\begin{lemme}
Si $N$ satisfait à l'Hypothèse~\ref{qac.7}, alors $R_\cI:=W_{\cI}\cap \bbZ^n$ est un réseau de $W_\cI$ et tout sous-ensemble de $\dim W_{\cI}$ vecteurs linéairement indépendants de $\{u_i\; | \; i\in \cI^c\}$ forme une base de $R_{\cI}$.  De plus, $N_{\cI}$ est un tore satisfaisant à l'Hypothèse~\ref{qac.7} par rapport à la suite \eqref{qac.11b} et au réseau $R_{\cI}$.  
\label{qac.11c}\end{lemme}
\begin{proof}
Par la Remarque~\ref{qac.11}, un sous-ensemble de $\dim W_{\cI}$ vecteurs indépendants de $\{u_i\; | \; i\in \cI^c\}$ est contenu dans une base de $\bbZ^n$.  En particulier, un tel sous-ensemble engendre $R_{\cI}$, ce qui montre que $R_{\cI}$ est bien un réseau de $W_\cI$.  Cet argument montre aussi que tout sous-ensemble de $\dim W_{\cI}$ vecteurs linéairement indépendants de $\{u_i\; | \; i\in \cI^c\}$ forme une base de $R_{\cI}$.  On en conclut que $N_\cI$ est un sous-groupe connexe, donc que c'est un tore. Le fait qu'il satisfait à l'Hypothèse~\ref{qac.7}  par rapport à la suite \eqref{qac.11b} et au réseau $R_{\cI}$ découle de la première partie de l'énoncé du lemme.     
\end{proof}

Soit $\overline{\bbH^d}$ la compactification radiale de $\bbH^d$ au sens de \cite{MelroseGST}.  Le bord $\pa\overline{\bbH^d}$ est naturellement identifié avec la sphère unité $\bbS(\bbH^d)$ de $\bbH^d$.  En particulier, comme l'action de $N$ sur $\bbH^d$ est unitaire, elle se restreint à une action sur $\bbS(\bbH^d)$ et se prolonge naturellement en une action sur $\overline{\bbH^d}$.  Soient $s_1,\ldots,s_{\ell}$ les strates ouvertes de $\pa\overline{\bbH^d}$ induites par l'action de $N$.  Supposons qu'elles soient énumérées de manière à respecter l'ordre partiel des strates, c'est-à-dire que
$$
     s_i< s_j \; \Longrightarrow \; i<j.
$$
En particulier, $s_{\ell}$ est la strate régulière de $\pa \overline{\bbH^d}$.
\begin{definition}\emph{(\cf \cite[Definition~4.5]{DR})}  La \textbf{compactification $\QAC$} de $\bbH^d$ par rapport à l'action de $N$ est la variété à coins
$$
  \widetilde{\bbH^d}= [\overline{\bbH^d};\overline{s}_1,\ldots, \overline{s}_{\ell-1}].
$$
\label{qac.12}\end{definition}

L'ordre dans lequel les éclatements sont effectués est important.  Puisque la strate $s_1$ est minimale, $s_1=\overline{s}_1$ est une sous-variété fermée, donc son éclatement dans $\bbH^d$ est bien défini, de même que celui de toute autre strate minimale.  Pour les autres éclatements, remarquons qu'avant d'éclater $\overline{s}_i$, il faut avoir éclaté les strates $\overline{s}_j$ telles que $s_j<s_i$, de sorte que par \cite[Proposition~7.4 et Theorem~7.5]{AM2011}, le relèvement de $\overline{s}_i$ est une $p$-sous-variété de $[\overline{\bbH^d};\overline{s}_1,\ldots, \overline{s}_{i-1}]$ et son éclatement est donc bien défini.  La variété à coins $\widetilde{\bbH^d}$ possède donc $\ell$ hypersurfaces bordantes $H_1,\ldots, H_{\ell}$ correspondant à chacune des strates $s_1,\ldots, s_{\ell}$ de $\pa\overline{\bbH^d}$.  Cela induit un ordre partiel sur $\cM_1(\widetilde{\bbH^d})$ conférant à $\widetilde{\bbH^d}$ une structure de fibrés itérés.  Les différents fibrés $\phi_i: H_i\to S_i$ sont induits par les applications de contraction des éclatements.  Plus précisément, $S_i$ correspond à la variété à coins fibrés résolvant la strate fermée $\overline{s}_i$.  Pour l'unique hypersurface bordante maximale 
$$
     H_{\ell}= S_\ell= [\pa \overline{\bbH^d};\overline{s}_1,\ldots, \overline{s}_{\ell-1}],
$$
le fibré $\phi_\ell: H_{\ell}\to S_{\ell}$ correspond à l'application identité.  La variété à coins fibrés $(\widetilde{\bbH^d},\phi)$ avec $\phi=(\phi_1,\ldots,\phi_{\ell})$ est donc de type $\QAC$.  Pour décrire les fibrés $\phi_i$ pour $i<\ell$, considérons d'abord le cas $i=1$.  L'application de contraction $[\overline{\bbH^d};\overline{s}_1]\to \overline{\bbH^d}$ induit alors un fibré
$$
    \overline{\phi}_1: \overline{H}_1\to S_1=\overline{s}_1
$$  
sur l'hypersurface bordante $\overline{H}_1$ de $[\overline{\bbH^d};\overline{s}_1]$ créée par l'éclatement de $\overline{s}_1$.  Si $\overline{s}_1=\pa\overline{V}_{\cI_1}$ pour $\cI_1\subset \{1,\ldots, d\}$, alors les fibres de $\overline{\phi}_1$ sont naturellement identifiées avec la compactification radiale $\overline{V}_{\cI_1^c}$ de $V_{\cI_1^c}$, où $\cI^c_1$ est le complément de $\cI_1$ dans $\{1,\ldots, d\}$.  De plus, on a une décomposition $\overline{H}_1= S_1\times \overline{V}_{\cI_1^c}$  dans laquelle le fibré $\overline{\phi}_1$ correspond au fibré trivial
$$
   \pr_1: S_1\times \overline{V}_{\cI^c_1}\to S_1
$$
donné par la projection sur le premier facteur. Or, en termes de cette décomposition, les éclatements subséquents correspondent sur $\overline{V}_{\cI^c_1}$ aux éclatements des intersections de $\overline{s}_2,\ldots,\overline{s}_{\ell-1}$ avec $\pa\overline{V}_{\cI^c_1}$, de sorte que sur $\widetilde{\bbH^d}$, $\overline{V}_{\cI^c_1}$ se relève en une compactification $\QAC$ $\widetilde{V}_{\cI^c_1}$ de $V_{\cI^c_1}$ par rapport à l'action du stabilisateur $N_{\cI_1}=T_{\cI_1}\cap N$ sur $V_{\cI^c_1}$.  Cela induit une décomposition $H_1= S_1\times \widetilde{V}_{\cI_1^c}$ dans laquelle $\phi_1$ correspond au fibré trivial
$$
        \pr_1: S_1\times \widetilde{V}_{\cI^c_1}\to S_1
$$
donné par la projection sur le premier facteur.  Le même genre de raisonnement montre plus généralement que pour $s_i$ pas forcément minimale avec $\overline{s}_i= \pa\overline{V}_{\cI_i}$, on a une décomposition $H_i=S_i\times \widetilde{V}_{\cI^c_i}$ dans laquelle $\phi_i$ correspond au fibré trivial
\begin{equation}
   \pr_1:S_i\times  \widetilde{V}_{\cI^c_i}\to S_i
\label{fibre.1}\end{equation}
et $\widetilde{V}_{\cI^c_i}$ est la compactification $\QAC$ de $V_{\cI^c_i}$ par rapport à l'action du stabilisateur $N_{\cI_i}=N\cap T_{\cI_i}$.
\begin{remarque}
La variété à coins fibrés $\widetilde{\bbH^d}$ est munie d'une classe canonique de fonctions bordantes totales $\QAC$-équivalentes, à savoir la classe correspondant au relèvement à $\widetilde{\bbH^d}$ d'une fonction bordante quelconque sur $\overline{\bbH^d}$.  Lorsqu'on parlera de métriques $\QAC$ sur $\widetilde{\bbH^d}$, ce sera par rapport à ce choix de classe de fonctions bordantes totales.
\label{qac.12b}\end{remarque}

Pour $\zeta\in \bbR^3\otimes \gn^*$, soit $\widetilde{\mu_N^{-1}(\zeta)}$ la fermeture de $\mu_N^{-1}(\zeta)$ dans $\widetilde{\bbH^d}$.  Dans ce cadre, on a l'analogue suivant du résultat de \cite[Theorem~4.6]{DR} pour les variétés de carquois.
\begin{theoreme}
Soit $N\subset T^d$ un sous-tore satisfaisant à l'Hypothèse~\ref{qac.7}.  Alors pour $\zeta\in \bbR^3\otimes \gn^*$  générique, $\widetilde{\mu_N^{-1}(-\zeta)}$ est une $p$-sous-variété de $\widetilde{\bbH^d}$ et la structure de fibrés itérés sur $\widetilde{\bbH^d}$ en induit une sur $\widetilde{\mu^{-1}(-\zeta)}$, c'est-à-dire que pour chaque $H_i\in \cM_1(\widetilde{\bbH^d})$, l'ensemble $H_i\cap \widetilde{\mu_N^{-1}(-\zeta)}$, lorsque non vide, est une hypersurface bordante de $\widetilde{\mu_N^{-1}(-\zeta)}$ avec fibré 
$$
  \phi_{N,i}: H_i\cap \widetilde{\mu_N^{-1}(-\zeta)}\to \Sigma_i
$$
induit par la restriction du fibré $\phi_i: H_i\to S_i$.  De plus, l'action de $N$ sur $\mu_N^{-1}(-\zeta)$ se prolonge en une action libre sur $\widetilde{\mu_N^{-1}(\zeta)}$ compatible avec la structure de fibrés itérés.
\label{qac.13}\end{theoreme}
\begin{proof}
La preuve est similaire à celle de \cite[Theorem~4.6]{DR}, mais plus simple du fait que $N$ est un groupe abélien.  En procédant par induction sur la profondeur de $\bbH^d$ en tant que $N$-espace, on peut supposer que le résultat est valide pour des cas de profondeur plus petite.  Considérons d'abord le cas où $H_i$ est minimale.  Soit $\cI\subset \{1,\ldots, d\}$ tel que $\overline{s}_i= \pa \overline{V}_{\cI}$.  Alors par la discussion précédente, il y a une décomposition
$$
    H_i= S_i\times \widetilde{V}_{\cI^c},
$$
où $\widetilde{V}_{\cI^c}$ est la compactification $\QAC$ de $V_{\cI^c}$ par rapport à l'action du stabilisateur $N_\cI=T_\cI\cap N$.  Si $\gn_{\cI}$ est l'algèbre de Lie de $N_\cI$, soit $\gn_{\cI}^{\perp}$ son complément orthogonal dans $\gn$.  En termes de la décomposition
\begin{equation}
\bbR^3\otimes \gn^*= (\bbR^3\otimes \gn_{\cI}^*)\oplus (\bbR^3\otimes (\gn^{\perp}_{\cI})^*),
\label{qac.14}\end{equation}
on a alors une décomposition correspondante pour l'application moment $\mu_N$, à savoir
\begin{equation}
 \mu_N= \mu_{N_{\cI}}+ \check{\mu}_{\cI^c}
\label{qac.15}\end{equation}
avec $\mu_{N_{\cI}}: \bbH^d\to \bbR^3\otimes \gn_{\cI}^*$ et $\check{\mu}_{\cI^c}: \bbH^d\to \bbR^3\otimes (\gn_{\cI}^{\perp})^*$.  En termes de  cette décomposition et de la décomposition 
\begin{equation}
 \bbH^d= V_{\cI^c}\oplus V_{\cI},
\label{qac.16}\end{equation}
l'équation $\mu_N(q)=-\zeta$ prend alors la forme
\begin{equation}
 \mu_{N_\cI}(\hat{q})=-\zeta_{\cI} \quad \mbox{et}  \quad \check{\mu}_{\cI^c}(\check{q})= -\zeta_{\cI}^\perp,
\label{qac.17}\end{equation}
où $\zeta=(\zeta_{\cI},\zeta_{\cI}^{\perp})$ dans la décomposition \eqref{qac.14} et $q=(\hat{q},\check{q})$ dans la décomposition \eqref{qac.16}.

Soient $\rho_{\cI}$ la distance par rapport à l'origine sur $V_{\cI}$ et $u_\cI=\frac{1}{\rho_{\cI}}$ la fonction bordante correspondante près de $\pa\overline{V}_\cI$ dans $\overline{V}_{\cI}$.  Soit $\check{\omega}$ des coordonnées sur la sphère unité $\bbS(V_{\cI})$ de $V_{\cI}$, de sorte que $(\rho_{\cI},\check{\omega})$ correspond à $\check{q}=\rho_{\cI}\check{\omega}$.  Dans les coordonnées $(\hat{q}, u_{\cI}, \check{\omega})$ près de l'intérieur de $H_i$, les équations \eqref{qac.17} prennent la forme
\begin{equation}
\mu_{N_\cI}(\hat{q})=-\zeta_{\cI} \quad \mbox{et}  \quad \check{\mu}_{\cI^c}(\check{\omega})= -u_\cI^2\zeta_{\cI}^\perp.
\label{qac.18}\end{equation}
Par définition d'une application moment hyperhamiltonienne, notons que la différentielle de la restriction de $\check{\mu}_{\cI^c}$ à $\bbS(V_{\cI})$ est de rang maximal, ce qui montre que l'équation
\begin{equation}
    \check{\mu}_{\cI^c}(\check{\omega})= -u_\cI^2\zeta_{\cI}^\perp
\label{qac.18b}\end{equation}
définit une $p$-sous-variété lisse sur $\overline{V}_{\cI}$ près de $\pa \overline{V}_{\cI}$ dont on dénotera le bord $\Sigma_i$.  En particulier, $\Sigma_i$ est une sous-variété de $S_i=\pa\overline{V}_{\cI}$.    

D'autre part, $\hat{q}$ peut être vu comme une coordonnée euclidienne sur l'intérieur des fibres de $\phi_i: H_i\to S_i$.  Par le Lemme~\ref{qac.11c}, $N_\cI$ satisfait à l'Hypothèse~\ref{qac.7}, donc par notre hypothèse d'induction sur la profondeur, pour $\zeta_{\cI}\in\bbR^3\otimes \gn_\cI^*$  générique, donc pour $\zeta$  générique, la fermeture $\widetilde{\mu_{N_\cI} ^{-1}(-\zeta_{\cI})}$ de $\mu_{N_\cI} ^{-1}(-\zeta_{\cI})$ dans $\widetilde{V}_{\cI^c}$ est une variété à coins fibrés comme décrite dans l'énoncé du théorème.  En tenant compte du fait que $u_{\cI}$ se relève sur $\widetilde{\bbH^d}$ en une fonction bordante totale près de $H_i$, cela montre que sur $\widetilde{\bbH^d}$, les équations \eqref{qac.18} définissent bien une $p$-sous-variété de $\widetilde{\bbH^d}$ près de $H_i$ et que $\phi_i$ induit bien par restriction un fibré
\begin{equation}
   \phi_{N,i}: H_i\cap \widetilde{\mu_N^{-1}(-\zeta)}\to \Sigma_i.
\label{fibre.2}\end{equation}

Le long d'une hypersurface bordante $H_i$ qui n'est pas minimale, on peut supposer par induction que $\widetilde{\mu_N^{-1}(-\zeta)}$ est une $p$-sous-variété près de $H_j$ pour $H_j<H_i$.  Il suffit donc de montrer que $\widetilde{\mu^{-1}_N(-\zeta)}$ est une $p$-sous-variété près de $\phi_i^{-1}(p)$ pour $p\in S_i\setminus \pa S_i$, en quel cas on peut appliquer essentiellement le même argument que dans le cas où $H_i$ est minimale pour obtenir le résultat.  Clairement, l'action de $N$ sur $\mu_N^{-1}(-\zeta)$ se prolonge en une action libre sur $\widetilde{\mu_N^{-1}(-\zeta)}$ compatible avec la structure de fibrés itérés.
\end{proof}

On en déduit le résultat suivant pour le quotient hyperkählérien $M_\zeta$.
\begin{corollaire}
Pour $\zeta\in \bbR^3\otimes \gn^*$  générique, la métrique hyperkählérienne du quotient hyperkählérien $M_{\zeta}$ est une métrique quasi-asymptotiquement conique exacte ayant un développement lisse à l'infini.
\label{qac.19}\end{corollaire}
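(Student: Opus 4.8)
Le plan est de ramener l'\'enonc\'e \`a la structure de vari\'et\'e \`a coins fibr\'es \'etablie au Th\'eor\`eme~\ref{qac.13} et de v\'erifier que la m\'etrique du quotient hyperk\"ahl\'erien se prolonge en une m\'etrique euclidienne lisse pour le fibr\'e ${}^{\QAC}TM$ associ\'e \`a la compactification $\widetilde{M_\zeta}:=\widetilde{\mu_N^{-1}(-\zeta)}/N$. Comme l'action de $N$ sur $\widetilde{\mu_N^{-1}(-\zeta)}$ est libre et compatible avec la structure de fibr\'es it\'er\'es (Th\'eor\`eme~\ref{qac.13}), la discussion g\'en\'erale de la section~\ref{tqfb.0} sur les quotients de vari\'et\'es \`a coins fibr\'es par une action libre compatible montre d\'ej\`a que $\widetilde{M_\zeta}$ est une vari\'et\'e \`a coins fibr\'es de type $\QAC$. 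Il reste donc essentiellement \`a contr\^oler la m\'etrique pr\`es de chaque hypersurface bordante, ce qu'on ferait par induction sur la profondeur de $\bbH^d$ en tant que $N$-espace, en suivant l'approche de \cite{DR}.

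On commencerait par \'etablir que la m\'etrique plate $g_{\bbH^d}$ est elle-m\^eme une m\'etrique $\QAC$ exacte \`a d\'eveloppement lisse sur $\widetilde{\bbH^d}$. Pr\`es d'une hypersurface bordante $H_i$ avec $\overline{s}_i=\pa\overline{V}_\cI$, la d\'ecomposition orthogonale $\bbH^d= V_{\cI^c}\oplus V_\cI$ scinde la m\'etrique plate en $g_{V_{\cI^c}}+g_{V_\cI}$. En posant $u_\cI=1/\rho_\cI$ comme dans la preuve du Th\'eor\`eme~\ref{qac.13}, la partie radiale sur $V_\cI$ prend la forme conique $\frac{du_\cI^2}{u_\cI^4}+\frac{g_{\bbS(V_\cI)}}{u_\cI^2}$, tandis que $g_{V_{\cI^c}}$ est, par hypoth\`ese d'induction, une m\'etrique $\QAC$ sur la fibre $\widetilde{V}_{\cI^c}$. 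On reconna\^{\i}t exactement un mod\`ele de type produit \eqref{tqfb.11g} (avec fonction de pond\'eration triviale), d'o\`u le fait que $g_{\bbH^d}$ est $\QAC$ exacte, avec un d\'eveloppement manifestement lisse dans le rep\`ere $\QAC$ associ\'e.

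On exprimerait ensuite la m\'etrique du quotient hyperk\"ahl\'erien \`a partir de la restriction de $g_{\bbH^d}$ \`a $\mu_N^{-1}(-\zeta)$ en retranchant la contribution des directions d'orbite de $N$: pour une base $\{t_a\}$ de $\gn$ de champs fondamentaux $\xi_{t_a}$, on a $g_{M_\zeta}= \iota^*g_{\bbH^d}-\sum_{a,b}(G^{-1})_{ab}\,\theta_a\,\theta_b$, o\`u $\theta_a=g_{\bbH^d}(\xi_{t_a},\cdot)$ et $G_{ab}=g_{\bbH^d}(\xi_{t_a},\xi_{t_b})$ est la matrice de Gram des orbites. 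Pr\`es de $H_i$, en scindant $\gn=\gn_\cI\oplus\gn_\cI^\perp$, les champs issus de $\gn_\cI$ agissent sur la fibre $V_{\cI^c}$ et restent born\'es, alors que ceux issus de $\gn_\cI^\perp$ agissent principalement sur $V_\cI$ et croissent comme $\rho_\cI$. La matrice $G$ poss\`ede donc une structure par blocs d'ordres respectifs $O(1)$, $O(\rho_\cI^2)$ et $O(1)$ pour les termes mixtes, et la libert\'e de l'action au bord (Th\'eor\`eme~\ref{qac.13}) assure que $G$ reste inversible jusqu'au bord, la matrice inverse $G^{-1}$ ayant alors les blocs respectifs $O(1)$, $O(\rho_\cI^{-2})$ et $O(\rho_\cI^{-2})$. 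Comme $\widetilde{\mu_N^{-1}(-\zeta)}$ est une $p$-sous-vari\'et\'e compatible avec la structure $\QAC$, que le quotient par $\gn_\cI^\perp$ s'effectue dans la direction conique de la base et celui par $\gn_\cI$ dans la fibre (qui est, par le Lemme~\ref{qac.11c} et l'hypoth\`ese d'induction, un quotient hyperk\"ahl\'erien $\QAC$ de $V_{\cI^c}$ par $N_\cI$), on obtiendrait que $g_{M_\zeta}$ est de type produit $\QAC$ pr\`es de $H_i$.

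La principale difficult\'e sera de v\'erifier que le projecteur horizontal, c'est-\`a-dire la matrice $G^{-1}$, se prolonge en une section lisse, \`a d\'eveloppement en puissances enti\`eres de $u_\cI$, compatible avec le fibr\'e ${}^{\QAC}T^*\widetilde{M_\zeta}$. Il faudra pour cela exprimer $G$ dans un rep\`ere $\QAC$ adapt\'e, v\'erifier que ses blocs renormalis\'es sont lisses et non d\'eg\'en\'er\'es jusqu'au bord, puis contr\^oler l'inversion bloc par bloc; les corrections provenant des \'equations de moment \eqref{qac.18}, qui n'interviennent qu'\`a l'ordre $u_\cI^2$, garantiront que seules des puissances enti\`eres de $u_\cI$ apparaissent, d'o\`u le caract\`ere lisse et exact du d\'eveloppement. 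L'invariance de $g_{\bbH^d}$ et des structures complexes sous l'action de $N$ assure enfin que tous ces objets descendent bien au quotient $\widetilde{M_\zeta}$.
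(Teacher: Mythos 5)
Votre texte est un plan de preuve plut\^ot qu'une preuve~: l'\'etape o\`u r\'eside toute la substance analytique de l'\'enonc\'e est explicitement report\'ee et jamais \'etablie. La formule de submersion $\pi^*g_{M_\zeta}=\iota^*g_{\bbH^d}-\sum_{a,b}(G^{-1})_{ab}\,\theta_a\theta_b$ est correcte, et votre analyse heuristique par blocs de la matrice de Gram $G$ (blocs d'ordres $\cO(1)$ et $\cO(\rho_\cI^2)$, termes mixtes $\cO(1)$, puis inversion par compl\'ement de Schur) est plausible~; mais pour conclure que $g_{M_\zeta}$ est $\QAC$ exacte avec d\'eveloppement lisse, il faut d\'emontrer que le terme soustrait, exprim\'e dans le rep\`ere \eqref{tqfb.9b} (avec pond\'eration triviale), se prolonge en une section lisse de $S^2$ du fibr\'e cotangent $\QAC$ jusqu'\`a chaque hypersurface bordante, de type produit modulo un terme s'annulant \`a l'ordre $x_{H_i}$. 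Vous \'ecrivez vous-m\^eme que c'est \emph{la principale difficult\'e} et vous vous contentez d'\'enum\'erer ce qu'il \emph{faudra} v\'erifier (r\'egularit\'e des blocs renormalis\'es, non-d\'eg\'en\'erescence uniforme jusqu'au bord, contr\^ole de l'inversion, apparition de puissances enti\`eres de $u_\cI$ seulement). Aucune de ces v\'erifications n'est faite~; c'est l\`a un trou r\'eel, car c'est pr\'ecis\'ement cette \'etape qui s\'epare l'\'enonc\'e d'une simple observation de quasi-isom\'etrie.

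La preuve de l'article contourne enti\`erement ce calcul par un argument structurel en trois pas~: par \cite{CDR}, $g_{\bbH^d}$ est d\'ej\`a une m\'etrique $\QAC$ exacte sur $\widetilde{\bbH^d}$ (point que vous red\'emontrez par r\'ecurrence sans n\'ecessit\'e)~; la restriction \`a $\widetilde{\mu_N^{-1}(-\zeta)}$ est automatiquement $\QAC$ exacte puisque, par le Th\'eor\`eme~\ref{qac.13}, sa structure de fibr\'es it\'er\'es est induite par celle de $\widetilde{\bbH^d}$~; enfin, l'action de $N$ \'etant libre, isom\'etrique, compatible avec la structure de fibr\'es it\'er\'es et pr\'eservant la classe canonique de fonctions bordantes totales (Remarque~\ref{qac.12b}), la m\'etrique induite sur le quotient est $\QAC$ exacte. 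Ce dernier pas encapsule exactement votre analyse de la matrice de Gram~: en ne vous appuyant pas sur les faits g\'en\'eraux concernant les quotients de vari\'et\'es \`a coins fibr\'es par une action libre compatible (section~\ref{tqfb.0}, \cite{AM2011,DR}), vous prenez en charge la partie la plus technique du travail et la laissez inachev\'ee. Si vous tenez \`a votre route, il faut d'abord d\'emontrer, une fois pour toutes, l'\'enonc\'e de descente des m\'etriques $\QAC$ exactes par submersion riemannienne $N$-invariante compatible, puis l'appliquer~; en l'\'etat, votre texte ne le fait pas.
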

\begin{proof}
Par \cite{CDR}, la métrique euclidienne $g_{\bbH^d}$ sur $\bbH^d$ peut être vue comme une métrique $\QAC$ exacte sur $\widetilde{\bbH^d}$.  Puisque la structure de fibrés itérés sur $\widetilde{\mu_N^{-1}(-\zeta)}$ est induite par celle de $\widetilde{\bbH^d}$, la restriction de la métrique $g_{\bbH^d}$ à $\widetilde{\mu_N^{-1}(-\zeta)}$ est automatiquement $\QAC$ exacte.  Puisque l'action de $N$ sur $\widetilde{\mu_N^{-1}(-\zeta)}$ est libre, compatible avec la structure de fibrés itérés et préserve la classe canonique d'équivalence $\QAC$ de fonctions bordantes totales, on en déduit que la métrique induite sur le quotient est bien $\QAC$ exacte.
\end{proof}

Soit $\cH^*(M_{\zeta})$ l'espace des formes harmoniques de carré intégrable par rapport à la métrique hyperkählérienne.  Le corollaire précédent nous permet de décrire ces espaces en termes des groupes de cohomologie de $M_{\zeta}$.

\begin{corollaire}
Si $\zeta\in\bbR^3\otimes \gn^*$ est  générique, alors
$$
    \cH^*(M_{\zeta})\cong \Im (H^*_c(M_{\zeta})\to H^*(M_{\zeta})),
$$
où $H^*(M_{\zeta})$ et $H_c^*(M_{\zeta})$ dénotent les groupes de cohomologie de de Rham pour les formes lisses et les formes lisses à support compact respectivement.
\label{qac.20}\end{corollaire}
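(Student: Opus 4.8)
Le plan est d'invoquer la th\'eorie du $L^2$-cohomologie r\'eduite pour les m\'etriques associ\'ees \`a une structure de Lie \`a l'infini, en s'appuyant sur le Corollaire~\ref{qac.19} qui nous assure que la m\'etrique hyperk\"ahl\'erienne sur $M_\zeta$ est $\QAC$ exacte avec d\'eveloppement lisse \`a l'infini. Tout d'abord, je rappellerais que, par la d\'ecomposition de Hodge-Kodaira dans le cadre $L^2$ sur une vari\'et\'e compl\`ete, l'espace $\cH^*(M_\zeta)$ des formes harmoniques de carr\'e int\'egrable s'identifie canoniquement \`a la cohomologie $L^2$ r\'eduite, c'est-\`a-dire au quotient des cocycles $L^2$ par l'adh\'erence (dans la topologie $L^2$) de l'image de la diff\'erentielle. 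Le c\oe ur de l'argument consiste donc \`a identifier cette cohomologie $L^2$ r\'eduite avec l'image $\Im(H^*_c(M_\zeta)\to H^*(M_\zeta))$.

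Ensuite, j'utiliserais le fait que la compactification $\widetilde{\mu_N^{-1}(-\zeta)}/N$ est une vari\'et\'e \`a coins fibr\'es de type $\QAC$ (Corollaire~\ref{qac.19}), ce qui permet d'appliquer la machinerie des op\'erateurs pseudodiff\'erentiels adapt\'es \`a cette structure (calcul $\QFB$ ou $\QAC$ de \cite{CDR}). La strat\'egie standard est d'\'etablir que le laplacien de Hodge est essentiellement auto-adjoint et Fredholm sur les espaces de Sobolev \`a poids appropri\'es, puis d'analyser les taux de d\'ecroissance polyhomog\`enes des formes harmoniques $L^2$ \`a l'infini. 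La croissance du volume \'etant d'ordre $\dim M_\zeta$ (Lemme~\ref{vg.1}), et la m\'etrique \'etant $\QAC$ exacte, les formes harmoniques $L^2$ doivent d\'ecro\^itre suffisamment vite pour \^etre \`a la fois $L^2$ et repr\'esentables par des classes \`a support essentiellement compact. L'identification pr\'ecise passe par un th\'eor\`eme de type Hausel-Hunsicker-Mazzeo, qui relie la cohomologie $L^2$ r\'eduite \`a l'image de la cohomologie \`a support compact dans la cohomologie de de Rham ordinaire, via l'analyse des poids critiques pour lesquels aucune valeur propre indicielle ne contribue.

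L'obstacle principal sera de contr\^oler le comportement asymptotique aux diff\'erents coins de la compactification $\QAC$, car contrairement au cas asymptotiquement conique (vari\'et\'e \`a bord), la structure it\'er\'ee des fibr\'es introduit plusieurs \'echelles de d\'ecroissance simultan\'ees. Il faudra v\'erifier qu'\`a chaque hypersurface bordante $H_i$, le spectre indiciel du laplacien ne rencontre pas le poids $L^2$ critique $0$, ce qui garantit l'absence de cohomologie $L^2$ non r\'eduite \og fant\^ome \fg{} provenant de l'infini. Cette v\'erification repose de mani\`ere essentielle sur la g\'en\'ericit\'e de $\zeta$ (assurant la lissit\'e via le Th\'eor\`eme~\ref{qac.9}) et sur le fait que les fibres des $\phi_{N,i}$ sont elles-m\^emes des quotients hyperk\"ahl\'eriens de profondeur inf\'erieure, ce qui permet un argument inductif sur la profondeur analogue \`a celui du Th\'eor\`eme~\ref{qac.13}. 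Une fois ces taux indiciels contr\^ol\'es, la dualit\'e de Poincar\'e et la suite exacte longue reliant $H^*_c$, $H^*$ et la cohomologie du bord donnent l'isomorphisme recherch\'e.
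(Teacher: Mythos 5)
There is a genuine gap. The paper's proof is short and rests on three ingredients: the Hodge theory of $\QAC$/$\QFB$ metrics developed in \cite{KR1,KR2}, the framework of \cite[\S~5]{DR}, and --- crucially --- the vanishing theorem of Bielawski--Dancer \cite[Theorem~6.7]{BD}, namely $H^k(M_{\zeta})=\{0\}$ pour $k>\frac{\dim_{\bbR}M_{\zeta}}{2}$; one then concludes by following \cite[\S~6]{DR}. Your plan contains the first ingredient in spirit ($L^2$ Hodge theory on the $\QAC$ compactification of Corollaire~\ref{qac.19}), but it omits the topological vanishing entirely, and this is not a technicality. For a general $\QAC$ metric, what the machinery of \cite{KR1,KR2} computes is the reduced $L^2$ cohomology in terms of (intersection) cohomology attached to the stratified space at infinity, and this does \emph{not} in general coincide with $\Im(H^*_c(M_\zeta)\to H^*(M_\zeta))$. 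It is precisely the vanishing above middle degree --- which by Poincar\'e duality also kills $H^k_c(M_{\zeta})$ below middle degree --- that localizes the comparison and makes the two answers agree, exactly as in \cite[\S~6]{DR}. Without this input your argument cannot close; note also that this vanishing is a nontrivial theorem about these hyperk\"ahler quotients, not something recoverable from duality or from the long exact sequence you invoke at the end.

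The substitute you propose for the missing machinery is also off target. You appeal to a theorem of \emph{Hausel--Hunsicker--Mazzeo type}, but that result concerns fibered-boundary and scattering metrics on manifolds with boundary; its extension to $\QAC$ metrics on manifolds with corners of higher depth is precisely the content of \cite{KR1,KR2}, which is what the paper cites, and even there the conclusion is not stated directly as $\Im(H^*_c\to H^*)$. Similarly, your step ``v\'erifier qu'\`a chaque hypersurface bordante le spectre indiciel du laplacien \'evite le poids critique'', to be carried out by induction on depth, is where essentially all of the analytic difficulty of \cite{KR1,KR2} lies; it is not a verification one can perform abstractly within this corollary. Finally, the genericity of $\zeta$ plays no role in controlling indicial roots: it only guarantees the smoothness of $M_{\zeta}$ and the existence of the $\QAC$ structure (Th\'eor\`eme~\ref{qac.9} and Corollaire~\ref{qac.19}), after which the proof is a citation of \cite{KR1,KR2}, \cite[\S~5--6]{DR} and \cite[Theorem~6.7]{BD}.
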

\begin{proof}
Par \eqref{fibre.1}, on déduit que le fibré \eqref{fibre.2} est trivial et donné par la projection
$$
    \pr_1: \Sigma_i\times \widetilde{\mu^{-1}(-\zeta)}\to \Sigma_i
$$
sur le premier facteur.  En particulier, le fibré euclidien plat $E_{H_i}\to \Sigma_i$ correspondant aux formes harmoniques de carré intégrable dans les fibres de \eqref{fibre.2} est trivial.  Cela implique qu'il est trivialement pleinement adéquatement $\Sp(1)$-équivariant au sens de \cite[Definition~8.1]{DR}.
En s'appuyant sur \cite{KR1,KR2},  \cite[\S~7]{DR} et en utilisant le fait \cite[Theorem~6.7]{BD} que $H^k(M_{\zeta})=\{0\}$ pour $k>\frac{\dim_{\bbR}M_{\zeta}}2$, on peut donc procéder comme dans \cite[\S~8]{DR} pour déduire le résultat du Corollaire~\ref{qac.20}.
\end{proof}

Lorsque $\zeta=0$, remarquons que $\mu^{-1}_N(0)$ est un cône dans $\bbH^d$ et que son quotient $\mu^{-1}_N(0)/N$ est un cône hyperkählérien.  De plus, sur $\overline{\bbH^d}$, on a par homogénéité  de l'application moment hyperhamiltonienne que $\overline{\mu^{-1}_N(-\zeta)}\cap \pa \overline{\bbH^d}=\overline{\mu^{-1}_N(0)}\cap \pa \overline{\bbH^d}$.  Sur l'hypersurface maximale $H_{\ell}$ de $\widetilde{\bbH^d}$, cela correspond au fait que 
$$
\widetilde{\mu^{-1}_N(-\zeta)}\cap H_\ell =\widetilde{\mu^{-1}_N(0)}\cap H_\ell .
$$
En particulier, près de $\widetilde{\mu^{-1}_N(-\zeta)}\cap H_{\ell}$, la métrique induite de $\mu_N^{-1}(-\zeta)$ est asymptotique à celle de $\mu^{-1}_N(0)$, c'est-à-dire qu'en tant que métrique $\QAC$ exacte, la métrique induite de $\mu^{-1}_N(-\zeta)$ est de la forme
\begin{equation}
 d\rho^2+ \rho^2g_{\cS}  + r, \quad r\in x_{H_{\ell}}\CI(\widetilde{\mu^{-1}(-\zeta)}; S^2({}^{\QAC}T^*(\widetilde{\mu^{-1}_N(-\zeta)}))),
\label{cone.1}\end{equation}
où $\rho$ est la distance par rapport à l'origine dans $\bbH^d$ et $g_{\cS}$ est la métrique wedge de la base du cône de $\mu^{-1}_N(0)$.  Lorsqu'on passe au quotient, on a de même que la métrique hyperkählérienne sur $\mu^{-1}(-\zeta)/N$ est asymptotique à celle de $\mu^{-1}(0)/N$ près de $H_{\ell}/N$, donnant lieu au résultat suivant.
\begin{corollaire}
Pour $\zeta\in\bbR^3\otimes \gn^*$ générique, le cône tangent à l'infini de la métrique hyperkählérienne sur $M_{\zeta}$ est le cône hyperkählérien $M_0$.
\label{cone.2}\end{corollaire}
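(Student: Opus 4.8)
Le plan est de rendre rigoureux le fait, d\'ej\`a en grande partie \'etabli dans la discussion pr\'ec\'edente, que la m\'etrique $\QAC$ exacte du Corollaire~\ref{qac.19} est conique \`a l'infini et que son c\^one limite est $M_0$. Je commencerais par v\'erifier que $M_0=\mu_N^{-1}(0)/N$ est bien un c\^one m\'etrique. Par homog\'en\'eit\'e de l'application moment hyperhamiltonienne, $\mu_N(\lambda q)=\lambda^2\mu_N(q)$ pour tout $\lambda>0$, de sorte que $\mu_N^{-1}(0)$ est invariant sous la dilatation $q\mapsto\lambda q$. Comme l'action de $N$ est lin\'eaire, donc unitaire, et commute avec cette dilatation, celle-ci descend au quotient et munit $M_0$ d'une structure de c\^one hyperk\"ahl\'erien, de lien $\cL:=(\mu_N^{-1}(0)\cap\bbS(\bbH^d))/N$ et de m\'etrique $d\rho^2+\rho^2 g_\cS$, o\`u $g_\cS$ d\'esigne la m\'etrique de $\cL$ induite au quotient par la m\'etrique wedge de la base du c\^one $\mu_N^{-1}(0)$ figurant dans \eqref{cone.1}.

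J'exploiterais ensuite l'identit\'e $\widetilde{\mu^{-1}_N(-\zeta)}\cap H_\ell=\widetilde{\mu^{-1}_N(0)}\cap H_\ell$, obtenue ci-haut par homog\'en\'eit\'e, qui garantit que le lien intervenant dans le d\'eveloppement conique \eqref{cone.1} pr\`es de $H_\ell$ est bien celui de $M_0$ apr\`es passage au quotient par l'action de $N$, libre pr\`es de $H_\ell$ par le Th\'eor\`eme~\ref{qac.13}. Ainsi, pr\`es de $H_\ell$, la m\'etrique hyperk\"ahl\'erienne sur $M_\zeta$ s'\'ecrit $d\rho^2+\rho^2 g_\cS+\bar r$, o\`u $\bar r$ est le terme de reste induit au quotient par $r$ de \eqref{cone.1} et s'annule donc \`a l'ordre $x_{H_\ell}$ comme \'el\'ement de $\CI$ \`a valeurs dans $S^2({}^{\QAC}T^*)$.

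Le c\oe ur de l'argument serait alors l'analyse du comportement sous remise \`a l'\'echelle. Le c\^one $(M_0,g_{M_0})$, avec $g_{M_0}=d\rho^2+\rho^2 g_\cS$, est invariant d'\'echelle, puisque la dilatation $\rho\mapsto\lambda\rho$ y est une homoth\'etie et que $(M_0,\lambda^{-2}g_{M_0})$ est donc isom\'etrique \`a $(M_0,g_{M_0})$. Le reste $\bar r$ est quant \`a lui de taille relative $O(x_{H_\ell})=O(\rho^{-1})$ par rapport \`a la m\'etrique conique. Par cons\'equent, en posant $\rho=\lambda s$, la famille point\'ee $(M_\zeta,\lambda^{-2}g_{M_\zeta})$ converge lorsque $\lambda\to\infty$ vers $(M_0,g_{M_0})$, la convergence \'etant lisse sur tout compact de la partie lisse du c\^one \`a distance positive du sommet et de Gromov--Hausdorff globalement. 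Le caract\`ere d\'ecroissant du reste assure de plus que la limite ne d\'epend pas de la fa\c{c}on dont $\lambda$ tend vers l'infini, d'o\`u l'unicit\'e du c\^one tangent.

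La principale difficult\'e tient au caract\`ere singulier de $M_0$, qui est un c\^one sur l'espace stratifi\'e $\cL$: il faut s'assurer que la convergence de Gromov--Hausdorff respecte cette structure, en tenant compte non seulement de l'hypersurface maximale $H_\ell$ mais aussi des strates inf\'erieures $H_i$, $i<\ell$, encodant les singularit\'es du lien. C'est pr\'ecis\'ement la r\'esolution $\widetilde{M_\zeta}:=\widetilde{\mu_N^{-1}(-\zeta)}/N$ par une vari\'et\'e \`a coins fibr\'es fournie par le Th\'eor\`eme~\ref{qac.13}, accompagn\'ee de son application de contraction vers $M_0$, qui permet de contr\^oler uniform\'ement ces strates et de conclure que $M_0$ est l'unique c\^one tangent \`a l'infini.
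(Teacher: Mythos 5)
Votre d\'emonstration est correcte et suit essentiellement la m\^eme d\'emarche que l'article, dont la justification du Corollaire~\ref{cone.2} est pr\'ecis\'ement la discussion qui le pr\'ec\`ede (homog\'en\'eit\'e de l'application moment donnant l'identit\'e $\widetilde{\mu^{-1}_N(-\zeta)}\cap H_\ell=\widetilde{\mu^{-1}_N(0)}\cap H_\ell$, forme exacte \eqref{cone.1}, passage au quotient par $N$); vous ne faites qu'expliciter l'argument de remise \`a l'\'echelle et de convergence de Gromov--Hausdorff point\'ee que l'article laisse implicite. Seule impr\'ecision: l'identification $O(x_{H_\ell})=O(\rho^{-1})$ n'est valable que loin des coins --- pr\`es des strates inf\'erieures, $x_{H_\ell}$ d\'ecro\^{\i}t plus lentement que $\rho^{-1}$ --- mais, comme le rel\`eve votre dernier paragraphe, ces r\'egions s'effondrent sous la remise \`a l'\'echelle, de sorte que la conclusion n'en est pas affect\'ee.
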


Remarquons que la métrique hyperkählérienne $\QAC$ du Corollaire~\ref{qac.19} sera asymptotiquement conique ($\AC$) si et seulement si $\widetilde{\mu^{-1}_N(-\zeta)/N}$ est une variété à bord, c'est-à-dire si et seulement si la base du cône tangent à l'infini $\mu^{-1}_N(0)/N$ est lisse.  Or, Bielawski et Dancer \cite[Theorem~4.1]{BD} donnent un critère pour que cette section soit lisse, ce qui nous permet de déduire le résultat suivant.
\begin{corollaire}
Si $\zeta\in\bbR^3\otimes\gn^*$ est  générique, la métrique hyperkählérienne $\QAC$ sur $M_{\zeta}$ sera asymptotiquement conique si et seulement si le sous-tore $N$ satisfaisant à l'Hypothèse~\ref{qac.7} est tel que tout sous-ensemble de $n$ vecteurs de $\{u_1,\ldots,u_d\}$ forme une base du réseau $\bbZ^n$.
\label{cone.3}\end{corollaire}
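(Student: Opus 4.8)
The plan is to reduce the asymptotically conical property to the smoothness of the link of the tangent cone at infinity, and then to exploit the general position of $\{u_1,\ldots,u_d\}$ forced by the hypothesis. By the discussion preceding the statement, for generic $\zeta$ the metric of Corollary~\ref{qac.19} is $\QAC$, and it is $\AC$ if and only if the compactification $\widetilde{\mu^{-1}_N(-\zeta)/N}$ is a manifold with boundary, equivalently if and only if the link of the tangent cone $M_0=\mu^{-1}_N(0)/N$ is smooth. Since $M_0$ does not depend on $\zeta$, the genericity of $\zeta$ serves only to make $M_\zeta$ smooth through Theorem~\ref{qac.9}; the $\AC$ property is then governed entirely by the arrangement. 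Thus the goal becomes to show that the hypothesis forces $M_0$ to have an isolated singularity at its apex.

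To locate the singular set of $M_0$, I would consider a point $q\in\mu^{-1}_N(0)$ with support $\cI:=\{k : q_k\neq 0\}$. Its $N$-stabilizer is $N_\cI=N\cap T_\cI$, whose Lie algebra, read off from \eqref{qac.5}, consists of those $t\in\gt^d$ with $t_p=0$ for $p\in\cI$ and $\sum_{p\in\cI^c}t_p u_p=0$; this is nontrivial exactly when $\{u_p : p\in\cI^c\}$ is linearly dependent. At $\zeta=0$ such a positive-dimensional stabilizer makes $\mu_N$ fail to be submersive at $q$, so $[q]$ is then a singular point of $M_0$. Hence the link is smooth as soon as every nonzero $q\in\mu^{-1}_N(0)$ has $\{u_p : p\in\cI^c\}$ independent, the connectedness of $N_\cI$ from Lemma~\ref{qac.11c} ruling out finite orbifold stabilizers.

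I would then verify this in two cases. If $|\cI^c|\le n$, any $\le n$ of the $u_k$, being part of a basis, are independent, so $N_\cI$ is trivial. If $|\cI^c|>n$, general position makes $\{u_p : p\in\cI^c\}$ span $\bbR^n$; combining this with the identification $\ker\iota_\gn^*=\operatorname{im}\beta^*$ from \eqref{qac.6} and the fact that $\mu(q)$ is supported on $\cI$, a short computation with the explicit moment map \eqref{am.1} forces $s=0$, hence $\mu(q)=0$ and finally $q=0$. Thus no nonzero point of $\mu^{-1}_N(0)$ has $|\cI^c|>n$, so the first case always applies and the link of $M_0$ is smooth, whence the metric is $\AC$. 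This is exactly the content of the smoothness criterion \cite[Theorem~4.1]{BD} for the section of the cone, which I would cite to package the argument.

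The step I expect to be the main obstacle is pinning down precisely why the hypothesis strictly strengthens Hypothesis~\ref{qac.7}: requiring that \emph{any} $n$ vectors, rather than merely any $n$ \emph{independent} ones, form a basis is equivalent to demanding that any $n$ of the $u_k$ be linearly independent, and it is exactly this general position at $\zeta=0$, forbidding the central hyperplanes $H_k$ from acquiring the spurious coincidences allowed under Hypothesis~\ref{qac.7} alone, that upgrades a $\QAC$ metric to a genuinely $\AC$ one. The remaining technical point is to make the moment-map argument of the third step uniform along the whole link rather than merely pointwise.
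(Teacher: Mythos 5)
Your proposal is correct, and it follows the same overall reduction as the paper: both arguments start from the equivalence, established in the discussion immediately preceding the corollary, that the $\QAC$ metric on $M_\zeta$ is $\AC$ if and only if the link of the tangent cone $M_0=\mu_N^{-1}(0)/N$ is smooth, and both identify the stated condition on $\{u_1,\ldots,u_d\}$ as what forces this smoothness. Where you genuinely differ is in how that step is justified: the paper's entire proof is the observation that, given Hypoth\`ese~\ref{qac.7}, the two conditions of \cite[Theorem~4.1]{BD} fuse into the single condition of the corollary, whereas you re-derive the smoothness criterion from scratch. Your derivation is sound: for nonzero $q\in\mu_N^{-1}(0)$ with support $\cI$, the infinitesimal stabilizer $\gn_\cI=\gn\cap\gt_\cI$ is nontrivial exactly when $\{u_p : p\in\cI^c\}$ is linearly dependent; triviality of $\gn_\cI$ gives surjectivity of $d\mu_N$ at $q$ (the standard fact about hyperk\"ahler moment maps that you use only implicitly and ought to state); Lemme~\ref{qac.11c} makes $N_\cI$ connected, so $\gn_\cI=0$ also rules out finite orbifold stabilizers; and your moment-map computation correctly shows that supports with $|\cI^c|>n$ --- the only ones that could carry a positive-dimensional stabilizer under the hypothesis --- simply do not occur on $\mu_N^{-1}(0)\setminus\{0\}$. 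The paper's route buys brevity by outsourcing to Bielawski--Dancer; yours buys a self-contained argument that in passing also proves the freeness of the $N$-action on $\mu_N^{-1}(0)\setminus\{0\}$ recorded in Remarque~\ref{cone.4}. Two minor points: your side claim that a positive-dimensional stabilizer makes $[q]$ a singular point of $M_0$ is the converse implication, is not needed for the corollary, and is not justified as stated; and the ``uniformity along the link'' issue you flag at the end is vacuous, since submersivity of $\mu_N$ and freeness of the action are pointwise conditions which together already yield smoothness of the whole quotient $(\mu_N^{-1}(0)\setminus\{0\})/N$.
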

\begin{proof}
Comme on suppose déjà que le sous-tore $N$ satisfait à l'Hypothèse~\ref{qac.7}, les deux conditions de \cite[Theorem~4.1]{BD} correspondent à la condition dans l'énoncé du corollaire.
\end{proof}
\begin{remarque}
Lorsque les conditions du Corollaire~\ref{cone.3} sont vérifiées, l'action de $N$ sur $\mu^{-1}_N(0)\setminus \{0\}$ est libre.
\label{cone.4}\end{remarque}
\begin{exemple}\emph{(\cite[p.737]{BD})}
Si $n=d-1$ et $\{e_1,\ldots,e_n\}$ est la base canonique de $\bbR^n$, on peut prendre $u_i=e_i$ pour $i\le n$ et $u_{n+1}=-(e_1+\ldots+e_n)$, en quel cas $N$ correspond au cercle diagonal dans $T^d$.  Par \cite[Example~12.8.5]{Boyer-Galicki}, on sait alors que pour $\zeta\in\bbR^3\setminus\{0\}$,  la métrique $g_{\zeta}$ correspond à (un multiple de) la métrique de Calabi \cite[Théorème~5.3]{Calabi} sur $T^*\bbC\bbP^{d-1}$.
\label{con.5}\end{exemple}

En fait, le résultat suivant montre qu'outre la métrique euclidienne, les seuls exemples de métriques hyperkählériennes toriques asymptotiquement coniques simplement connexes de dimension $4n\ge 8$ sont précisément ceux de l'Exemple~\ref{con.5}.
\begin{corollaire}
Soit $(M,g)$ une variété hyperkählérienne torique simplement connexe de dimension $4n\ge 8$.  Alors la métrique $g$ est isométrique à la métrique euclidienne ou à un multiple de la métrique de Calabi.
\label{acht.1}\end{corollaire}
\begin{proof}
Une variété asymptotiquement conique étant de type topologique fini, on est dans le cadre de la classification de Bielawski \cite[Theorem~1]{Bielawski} et on peut supposer que la variété $(M,g)$ est un quotient hyperkählérien de la forme \eqref{qac.8}.  Comme $M$ est une variété lisse, on peut supposer par la Remarque~\ref{qac.10b} que l'Hypothèse~\ref{qac.7} est vérifiée et que $(M,g)$ est une variété obtenue dans le cadre du Corollaire~\ref{cone.3}.  

Si $n=d$ dans ce corollaire, alors $N=\{\Id\}$ et la métrique hyperkählérienne torique correspond à la métrique euclidienne.  Si $n<d$, alors par le Corollaire~\ref{cone.3}, $\{u_1,\ldots,u_n\}$ forme une base de $\bbZ^n$ et $u_{n+1}$ est forcément de la forme
$$
     u_{n+1}= \sum_{i=1}^n a_iu_i \quad \mbox{avec} \quad a_i\in\{-1,1\}.
$$  
De plus, on a forcément que $d=n+1$.  En effet, autrement, $u_{n+2}$ serait aussi de la forme
$$
          u_{n+2}= \sum_{i=1}^n b_iu_i \quad \mbox{avec} \quad b_i\in\{-1,1\}.
$$
Puisque $u_{n+1}$ et $u_{n+2}$ doivent être linéairement indépendants, il existerait $i_1,i_2\in\{1,\ldots,n\}$  tels que $b_{i_1}= a_{i_1}$ et $b_{i_2}=-a_{i_2}$.  Or, dans ce cas, $\{u_1,\ldots,u_{n+2}\}\setminus\{u_{i_1},u_{i_2}\}$ ne forme pas une base de $\bbZ^n$, car $u_{i_1}$ et $u_{i_2}$ ne sont pas contenus dans le réseau engendré par les vecteurs de cet ensemble.  Pour éviter une contradiction avec l'énoncé du Corollaire~\ref{cone.3}, il faut donc admettre que $d=n+1$.  

De la description de $u_1,\ldots, u_{n+1}$, on déduit que le sous-tore $N$ correspond au cercle
$$
    \{(e^{-ia_1t},\ldots,e^{-ia_nt},e^{it})\in T^d\; | \; t\in\bbR\} \subset T^d.
$$
Quitte à interchanger les rôles de $z_i$ et $w_i$ dans \eqref{action.1} lorsque $a_i=1$, on peut en fait supposer que $N$ correspond au cercle diagonal de $T^d$.  C'est le cadre de l'Exemple~\ref{con.5} et la métrique hyperkählérienne torique est donc un multiple de la métrique de Calabi.

\end{proof}

Cependant, lorsque $n=1$ dans le Corollaire~\ref{cone.3}, on obtient d'autres exemples de variétés hyperkählériennes toriques asymptotiquement coniques en prenant $d>n+1$, en quel cas $M_{\zeta}$ est difféomorphe à une résolution crépante de $\bbC^2/\bbZ_d$ et la métrique $g_{\zeta}$ est $\ALE$ avec cône tangent à l'infini  $\bbC^2/\bbZ_d$.

\section{Déformations de Taub-NUT d'ordre 1 des variétés hyperkählériennes toriques asymptotiquement coniques} \label{dtn.0}

Dans cette section, nous allons donner une description détaillée de la géométrie à l'infini des déformations de Taub-NUT d'ordre 1 de variétés hyperkählériennes toriques asymptotiquement coniques.  Pour ce faire, procédons dans un premier temps à une révision rapide de la notion de déformation de Taub-NUT.
\begin{definition}\emph{(\cite[Definition~2]{Bielawski})}. Soit $(M^{4n},g)$ une variété hyperkählérienne complète et connexe de type topologique fini munie d'une action hyperhamiltonienne effective de $G=\bbR^p\times \bbT^{n-p}$.  Alors une \textbf{déformation de Taub-NUT d'ordre $m$} de $(M^{4n},g)$ est le quotient hyperkählérien de $M\times \bbH^m$ par $\bbR^m$, où l'action de  $\bbR^m$ sur $M$ est spécifiée par une application linéaire injective $\sigma$ de $\bbR^m$ dans l'algèbre de Lie de $G$, alors que son action sur  $\bbH^m\cong (\bbR^m)^4$ est par translation dans le facteur réel.  On dénote la métrique hyperkählérienne correspondante par $g_{\TN_{\sigma}}$.   
\label{dtn.1}\end{definition}  

Soit $\mu: M\to \bbR^3\otimes(\bbR^m)^*$ un choix d'application moment hyperhamiltonienne de l'action de $\bbR^m$ sur $M$.  Pour $q=(q_0,q_1,q_2,q_3)\in (\bbR^m)^4\cong \bbH^m$, l'action de $\bbR^m$ sur $\bbH^m$ est induite par l'application moment  $ \lambda=(\lambda_1,\lambda_2,\lambda_3)$ définie par
\begin{equation}
   \lambda_\ell(q)= -q_{\ell}.
\label{dtn.2}\end{equation}
La déformation de Taub-NUT de $M^{4n}$ correspond alors au quotient 
$$
     (\mu+\lambda)^{-1}(0)/\bbR^m.
$$
Clairement, chaque orbite de l'action de $\bbR^m$ sur $M\times \bbH^m$ contient un unique point $p$ pour lequel $q_0=0$.  Si $M\times (\Im \bbH)^m$ dénote la sous-variété de $M\times \bbH^m$ sur laquelle $q_0=0$, cela signifie que l'inclusion
$$
      X_M:=(\mu+\lambda)^{-1}(0)\cap (M\times (\Im\bbH)^m ) \subset  (\mu+\lambda)^{-1}(0)
$$
induit un difféomorphisme 
\begin{equation}
  X_M\cong (\mu+\lambda)^{-1}(0)/\bbR^m.
\label{dtn.3}\end{equation}
Comme $X_M$ correspond au graphe de l'application $\mu$, on a donc un difféomorphisme naturel entre $M$ et sa déformation de Taub-NUT.  Soient $\xi_1,\ldots,\xi_m\in\CI(M;TM)$ les champs vectoriels sur $M$ correspondant à l'action infinitésimale des éléments $e_1,\ldots,e_m$ de la base canonique de $\bbR^m$.  
Soit $\cU$ un ouvert de $M$ où l'action de $\bbR^m$ est localement libre.  Alors les champs vectoriels $\xi_1,\ldots, \xi_m$ engendrent un sous-fibré vectoriel $E$ de $T\cU$.  Comme les champs vectoriels $\xi_i$ sont tangents aux hypersurfaces de niveaux de $\mu$, $E$ se relève naturellement en un sous fibré du fibré tangent de  $\cV:=X_M\cap (\cU\times(\Im\bbH)^m)$ qu'on dénotera aussi $E$. Soit $g_{(\Im\bbH)^m}$ la métrique canonique sur $(\Im\bbH)^m$.  Dénotons par $E^{\perp}$ le complément orthogonal de $E$ dans $T\cV$ par rapport à la métrique induite par $g+g_{(\Im\bbH)^m}$, de sorte que
\begin{equation}
 T\cV= E^{\perp}\oplus E.
\label{dtn.4}\end{equation}
Soient $g_E$ et $g_{E^{\perp}}$ les restrictions de la métrique $g+g_{(\Im\bbH)^m}$ à $E$ et $E^{\perp}$.  Pour décrire $g_{E}$, nous ferons l'hypothèse suivante, qui sera satisfaite pour les cas sur lesquels on se penchera dans cet article.
\begin{hypothese}
Il existe une base $\{\widetilde{e}_1,\ldots,\widetilde{e}_m\}$ de $\bbR^m$ telle que les champs vectoriels $\widetilde{\xi}_1,\ldots,\widetilde{\xi}_m\in\CI(M;TM)$ associés aux actions infinitésimales de $\widetilde{e}_1,\ldots,\widetilde{e}_m$ sont mutuellement orthogonaux par rapport à la métrique $(g+g_{(\Im\bbH)^m})|_\cV$ lorsque relevés à $\cV$.
\label{dtn.5}\end{hypothese}
Sous cette hypothèse, soient $\theta_1,\ldots,\theta_m$ les $1$-formes duales à $\widetilde{\xi_1},\ldots,\widetilde{\xi}_m$ par rapport à la métrique $(g+g_{(\Im\bbH)^m})|_\cV$.  Posons 
$$
     V_i:= \frac{1}{(g+g_{(\Im\bbH)^m})|_\cV(\widetilde{\xi}_i,\widetilde{\xi}_i)}= \frac{1}{g(\widetilde{\xi}_i,\widetilde{\xi}_i)} \quad \mbox{et} \quad \eta_i=V_i\theta_i,
$$
de sorte que $\eta_i(\widetilde{\xi}_j)=\delta_{ij}$.  En termes de cette notation, on a alors que 
\begin{equation}
 g_E= \sum_{i=1}^m \frac{\eta_i^2}{V_i}.
\label{dtn.6}\end{equation}
\begin{lemme}
Sous l'Hypothèse~\ref{dtn.5} et en termes de l'identification \eqref{dtn.3} et de la décomposition \eqref{dtn.4}, on a que
$$
    g_{\TN_\sigma}|_{\cV}= g_{E^{\perp}}+ \sum_{i=1}^m \frac{h_{ii}\eta_i^2}{h_{ii}V_i+1}+ \sum_{i\ne j}\frac{h_{ij}\eta_i\otimes \eta_j}{(h_{ii}V_i+1)(h_{jj}V_j+1)},
$$
où $h_{ij}:=\widetilde{e}_i\cdot \widetilde{e}_j$ est le produit scalaire de $\widetilde{e}_i$ et $\widetilde{e}_j$ par rapport à la métrique canonique de $\bbR^m$.
\label{dtn.7}\end{lemme}
\begin{proof}
Comme $E^{\perp}$ est perpendiculaire aux orbites de l'action de $\bbR^m$ sur $\cV$ (et sur $\cV\times \bbR^m\subset M\times \bbH^m$), on a clairement que \eqref{dtn.4} est aussi une décomposition orthogonale par rapport à la métrique $g_{\TN_\sigma}$ et que 
$$
g_{\TN_\sigma}|_{E^{\perp}}= (g+g_{(\Im\bbH)^m})|_{E^{\perp}}= g_{E^{\perp}}.  
$$
Il reste donc à déterminer $g_{\TN_{\sigma}}|_E$.  Par l'Hypothèse~\ref{dtn.5}, cela correspond à calculer $g_{\TN_\sigma}(\widetilde{\xi}_i,\widetilde{\xi}_j)$ pour chaque $i,j\in\{1,\ldots,m\}$.  Pour $p\in \cV$ fixé, cela revient à projeter 
$$
(\widetilde{\xi}_i(p),0)\in E_p\oplus \bbR^m \subset T_{(p,0)}(\cV\times \bbR^m)= T_{(p,0)}(M\times \bbH^m)
$$
sur le sous-espace  de $T_{(p,0)}(M\times \bbH^m)$ orthogonal au plan tangent de  l'orbite de l'action de $\bbR^m$ sur $M\times \bbH^m$ passant par $(p,0)$.  Or, la restriction de $g+g_{\bbH^m}$ à $E_p\oplus \bbR^m\subset T_{(p,0)}(M\times \bbH^m)$ est donnée par
$$
(g+g_{\bbH^m})|_{E_p\oplus\bbR^m}\lrp{ (\sum_i a_i\widetilde{\xi}_i(p), s), (\sum_i b_i\widetilde{\xi}_i(p),t) }= \sum_i \frac{a_ib_i}{V_i(p)}+ s\cdot t,  \quad s,t\in\bbR^m,
$$
alors que le plan tangent de l'orbite passant par $(p,0)$ est engendré par $\widetilde{\xi}_1(p)+\widetilde{e}_1,\ldots, \widetilde{\xi}_m(p)+\widetilde{e}_m$.  Un calcul élémentaire montre alors que la projection de $(\widetilde{\xi}_i(p),0)$ sur le sous-espace orthogonal à ce plan tangent de l'orbite est
$$
        \frac{h_{ii}V_i(p)\widetilde{\xi}_i(p) -e_i}{1+h_{ii}V_i(p)}.
$$ 
On en déduit que
$$
\begin{aligned}
   g_{\TN_\sigma}(\widetilde{\xi}_i(p),\widetilde{\xi}_j(p))&= (g+g_{\bbH^m})\lrp{\frac{h_{ii}V_i(p)\widetilde{\xi}_i(p) -\widetilde{e}_i}{1+h_{ii}V_i(p)},\frac{h_{jj}V_j(p)\widetilde{\xi}_j(p) -\widetilde{e}_j}{1+h_{jj}V_j(p)}} \\
   &= \left\{\begin{array}{ll}\frac{h_{ii}}{1+h_{ii}V_i(p)}, & i=j, \\ \frac{h_{ij}}{(1+h_{ii}V_i(p))(1+h_{jj}V_j(p))}, & i\ne j, \end{array} \right.  
 \end{aligned}  
$$
d'où le résultat.
\end{proof}

Dans les cas qui nous occuperont, les fonctions $V_i$ tendent vers zéro à l'infini dans $\cV$, ce qui montre que  pour $t\in[a,b]$ et $p\in\cV$, la longueur du segment $\gamma(t)=\phi_{\xi_i,t}(p)$  d'une orbite engendrée par $\xi_i$  
tend vers $b-a$ lorsque $p$ tend vers l'infini.  Comprendre la géométrie à l'infini de $g_{\TN_\sigma}$ revient alors essentiellement à comprendre la géométrie à l'infini de $g_{E^{\perp}}$.  

Dans cette section, nous allons nous concentrer sur des déformations de Taub-NUT d'ordre $1$, donc le cas $m=1$.  l'Hypothèse~\ref{dtn.5} est alors triviale, puisqu'il suffit de prendre $\widetilde{e}_1=e_1$. En ce qui concerne la variété hyperkählérienne $(M,g)$, nous allons nous restreindre au cas où celle-ci est une variété hyperkählérienne torique asymptotiquement conique 
$$
      M_{\zeta}=\mu_N^{-1}(-\zeta)/N
$$
donnée par le Corollaire~\ref{cone.3}.  Dénotons par $g_{\zeta}$ la métrique hyperkählérienne asymptotiquement conique associée. 
\begin{exemple}
On peut prendre $N=\{\Id\}$ et $\zeta=0$, en quel cas la métrique $g_{\zeta}$ correspond à la métrique canonique $g_{\bbH^d}$ sur $\bbH^d$.  C'est un exemple trivial du Corollaire~\ref{cone.3}, mais important malgré tout, puisqu'il admet plusieurs déformations de Taub-NUT d'ordre $1$ non triviales.  
\label{dtn.7a}\end{exemple}

Pour spécifier une déformation de Taub-NUT d'ordre $1$ de $M_{\zeta}$, il nous faut aussi spécifier une application linéaire  $\sigma: \bbR\hookrightarrow \gt^d$ telle que le sous-espace $\sigma(\bbR)$ n'est pas inclus dans l'algèbre de Lie $\gn$ de $N$.  En effet, par passage au quotient, celle-ci spécifie une inclusion linéaire dans l'algèbre de Lie du groupe $G:= T^d/N\cong T^n$ agissant de manière hyperhamiltonienne sur $M_{\zeta}$. L'inclusion $\sigma$ induit une action hyperhamiltonienne de $\bbR$ sur $\bbH^d$ avec application moment hyperhamiltonienne
$$
   \mu_{\sigma}:=(\Id\otimes \sigma)^*\circ \mu.
$$ 
On obtient par restriction une action de $\bbR$ sur $\mu_N^{-1}(-\zeta)$ qui descend en une action hyperhamiltonienne de $\bbR$ sur le quotient $M_{\zeta}$.  Notons que $\mu_{\sigma}$ est constante le long des orbites de l'action de $N$, donc $\mu_{\sigma}|_{\mu^{-1}(-\zeta)}$ induit sur $M_{\zeta}$ une application moment hyperhamiltonienne pour l'action de $\bbR$ sur $M_{\zeta}$ qu'on dénotera $\mu_\sigma|_{M_\zeta}$.  Les applications hyperhamiltoniennes des actions de $\bbR$ sur $\bbH^d\times \bbH$ et $M_{\zeta}\times \bbH$ sont donc données par
$$
      \mu_{\sigma}+\lambda \quad \mbox{et} \quad \mu_{\sigma}|_{M_{\zeta}}+\lambda,
$$
où $\lambda:\bbH\to \bbR^3$ est l'application moment hyperhamiltonienne spécifiée par \eqref{dtn.2} dans le cas où $m=1$.  Dénotons par $g_{\zeta,\TN_\sigma}$ la métrique hyperkählérienne associée à la déformation de Taub-NUT de $g_\zeta$ spécifiée par l'inclusion $\sigma: \bbR\hookrightarrow \gt^d$.

Pour décrire la géométrie à l'infini de $g_{\zeta,\TN_\sigma}$, nous allons introduire une compactification de $\bbH^d\times \Im\bbH$ par rapport à laquelle la fermeture de la   sous-variété
$$
    X_{\zeta}:= (\mu_\sigma+\lambda)^{-1}(0)\cap\lrp{\mu_N^{-1}(-\zeta)\times \Im\bbH}
$$
 se comporte bien.  

  Considérons dans un premier temps la compactification radiale $\overline{\bbH^d\times \Im \bbH}$ de $\bbH^d\times \Im\bbH$.  
Soit $\cI\subset \{1,\ldots,d\}$ le plus grand sous-ensemble tel que $\sigma(\bbR)\subset \gt_{\cI}$, où $\gt_{\cI}$ est l'algèbre de Lie du tore $T_{\cI}$ défini en \eqref{qac.4a}.  Avec la convention que $V_{\emptyset}=\{0\}$, on a alors que $V_{\cI}$ est le sous-espace des points fixes de $\bbH^d$ par rapport à l'action de $\bbR$.  En particulier, si $q_{\cI}$ et $q_{\cI^c}$ sont les variables quaternioniques de $V_{\cI}$ et $V_{\cI^c}$, remarquons qu'en termes de la décomposition $\bbH^d=V_{\cI}\oplus V_{\cI^c}$, l'application $\mu_{\sigma}$ ne dépend que de $q_{\cI^c}$. Soit $\overline{V_{\cI}\times \Im\bbH}$ la fermeture de $V_{\cI}\times \Im\bbH$ dans $\overline{\bbH^d\times \Im\bbH}$.   Les équations définissant $X_{\zeta}$ sont alors données par 
\begin{equation}
  \mu_\sigma(m)=q \quad \mbox{et}  \quad \mu_N(m)=-\zeta  \quad \mbox{pour} \quad (m,q)\in \bbH^d\times \Im\bbH.
\label{dtn.9}\end{equation}
Soit $\rho$ la distance par rapport à l'origine dans $\bbH^d\times \Im\bbH$.  Soient $\rho$ et $\omega=(\omega_{\bbH^d},\omega_{\Im \bbH})=(\frac{m}{\rho},\frac{q}{\rho})$ les coordonnées sphériques correspondantes.  Alors $u:=\rho^{-1}$ est une fonction bordante pour $\pa\overline{\bbH^d\times \Im\bbH}$ dans $\bbH^d\times \Im\bbH$.  Dans les coordonnées $(u,\omega)$ près de $\pa\overline{\bbH^d\times \Im\bbH}$, les équations \eqref{dtn.9} prennent la forme
\begin{equation}
  \mu_{\sigma}(\omega_{\bbH^d})= u\omega_{\Im\bbH} \quad \mbox{et} \quad  \mu_N(\omega_{\bbH^d})=-u^2\zeta,
\label{dtn.10}\end{equation}
de sorte que si $\bX_{\zeta}$ dénote la fermeture de $X_{\zeta}$ dans $\overline{\bbH^d\times \Im\bbH}$, alors $\pa\bX_{\zeta}=\bX_{\zeta}\cap\pa\overline{\bbH^d\times \Im\bbH}$ est donné par la restriction de ces équations en $u=0$, c'est-à-dire par les équations
\begin{equation}
    \mu_{\sigma}(\omega_{\bbH^d})=0 \quad \mbox{et}\quad \mu_N(\omega_{\bbH^d})=0.
\label{dtn.10b}\end{equation}

Puisque l'action de $N$ sur $\mu_N^{-1}(0)\setminus\{0\}$ est libre, remarquons que la différentielle de $\mu_N$ est surjective lorsqu'évaluée en un point de $\mu_N^{-1}(0)\setminus\{0\}$.  De même, puisque l'action de $\bbR$ restreinte à $\bbH^d\setminus V_{\cI}$ est localement libre, la différentielle de $\mu_\sigma$ est surjective lorsqu'évaluée en  $\bbH^d\setminus V_{\cI}$.
Comme $\sigma(\bbR)$ n'est pas contenu dans $\gn$, cela montre que le sous-ensemble $\bX_\zeta$  est une $p$-sous-variété, sauf le long de $\pa\overline{V_{\cI}\times \Im\bbH}$ où les équations \eqref{dtn.10} dégénèrent.  En procédant comme dans \cite{CR,CR2023}, ces dégénérescences peuvent être résolues grâce à une suite d'éclatements.  Soit $\overline{V_{\cI}\times \{0\}}$ la fermeture de $V_{\cI}\times \{0\}$ dans $\overline{\bbH^d\times \Im\bbH}$ et considérons l'espace éclaté
\begin{equation}
   W:= [\overline{\bbH^d\times \Im\bbH}; \pa\overline{V_{\cI}\times \{0\}}]
\label{dtn.11}\end{equation}
avec application de contraction $\beta_{W}: W\to \overline{\bbH^d\times \Im\bbH}$,
où $\pa \overline{V_{\cI}\times \{0\}}= \overline{V_{\cI}\times \{0\}}\cap \pa \overline{\bbH^d\times \Im\bbH}$.  Si $\cI=\emptyset$, $V_{\cI}=\{0\}$ et $\pa\overline{V_{\cI}\times \{0\}}=\emptyset$, de sorte que $W=\overline{\bbH^d\times\Im\bbH}$.  Autrement, $W$  est une variété à coins ayant deux hypersurfaces bordantes  
\begin{equation}
   H_1:= \beta_{W}^{-1}(\pa\overline{V_{\cI}\times \{0\}}) \quad \mbox{et} \quad H_4:= \overline{\beta_{W}^{-1}(\pa \overline{\bbH^d\times \Im\bbH}\setminus \pa\overline{V_{\cI}\times \{0\}} )}.
\label{dtn.12}\end{equation}
Lorsque $V_{\cI}=\{0\}$, on utilisera la convention $H_1=\emptyset$ et $H_4=\pa\overline{\bbH^d\times \Im\bbH}$ pour que \eqref{dtn.12} reste valide.    

Si $x_{H_i}$ dénote un choix de fonction bordante pour $H_i$, soit maintenant $\tW$ la variété à coins qui, en tant qu'espace topologique, coïncide avec $W$, mais a pour espace de fonctions lisses les fonctions lisses sur $W\setminus H_4$ ayant un développement lisse en puissances entières de $x_{H_4}^{\frac12}$ (plutôt qu'en puissances entières de $x_{H_4}$).  Dénotons par $\tH_i$ l'hypersurface bordante $H_i$ de $W$ vue comme une hypersurface bordante de $\widetilde{W}$.  Soient $\widetilde{V_{\cI}\times \Im \bbH}$ et $\widetilde{\{0\}\times \Im \bbH}$ les fermetures de $V_{\cI}\times \Im \bbH$ et $\{0\}\times \Im \bbH$ dans $\widetilde{W}$ et considérons l'espace éclaté
\begin{equation}
   \hW:= [\tW; (\widetilde{\{0\}\times \Im \bbH})\cap \tH_4, (\widetilde{V_{\cI}\times \Im \bbH})\cap \tH_4]
\label{dtn.13}\end{equation}  
avec application de contraction $\beta_{\hW}: \hW\to \tW$.
\begin{remarque}
Lorsque $V_{\cI}=\{0\}$, les deux $p$-sous-variétés éclatées coïncident de sorte que le deuxième éclatement est trivial.  D'autre part, si $V_{\cI}\ne \{0\}$, mais $N=\{\Id\}$ et $\zeta=0$, l'éclatement de $(\widetilde{\{0\}\times \Im \bbH})\cap \tH_4$ n'est pas nécessaire pour décrire le comportement à l'infini de $X_{\zeta}$.
\label{dtn.14}\end{remarque}
\begin{remarque}
Comme les $p$-sous-variétés $(\widetilde{\{0\}\times \Im \bbH})\cap \tH_4$ et $(\widetilde{V_{\cI}\times \Im \bbH})\cap \tH_4$ sont emboîtées, les éclatements correspondant commutent par \cite[Lemma~2.1]{hmm}.  
\label{dtn.14c}\end{remarque}

Soient $\hH_1$ et $\hH_4$ les hypersurfaces bordantes correspondant aux relèvements à $\hW$ des hypersurfaces bordantes $\tH_1$ et $\tH_4$ de $\tW$, et soient 
$$
    \hH_2:= \overline{\beta_{\hW}^{-1}((\widetilde{\{0\}\times \Im \bbH})\cap \tH_4)} \quad \mbox{et} \quad
    \hH_3:= \overline{\beta_{\hW}^{-1}( ((\widetilde{V_\cI\times \Im\bbH})\setminus (\widetilde{\{0\}\times \Im \bbH}))\cap \tH_4)}
$$
les hypersurfaces bordantes créées par les deux éclatements définissant $\hW$.  Lorsque $V_{\cI}=\{0\}$, remarquons que $\hH_3=\hH_1=\emptyset$, c'est-à-dire que $\hW$ ne possède que deux hypersurfaces bordantes.  

Chaque application de contraction associée à un  éclatement entrant dans la définition de  $\hW$ à partir de $\overline{\bbH^d\times \Im\bbH}$ induit un fibré $\hphi_i: \hH_i\to \hS_i$ sur l'hypersurface bordante correspondante.  Les bases de ces fibrés sont données par
$$
\hS_1=\pa\overline{V_{\cI}\times\{0\}}, \quad \hS_2= \pa \overline{\{0\}\times \Im\bbH} \quad \mbox{et} \quad \hS_3=[\pa \overline{V_\cI\times \Im\bbH}; \pa\overline{V_{\cI}\times \{0\}}, \pa \overline{\{0\}\times \Im\bbH}].
$$
Dans ces trois cas, on a en fait une décomposition canonique $\hH_i= \widehat{F}_i\times \hS_i$ avec $\hphi_i$ correspondant à la projection sur le deuxième facteur.  Pour $i\in\{2,3\}$, les fibres de ces fibrés sont données explicitement par
\begin{equation}
     \widehat{F}_2= [\overline{\bbH^d}; \pa\overline{V_{\cI}}] \quad\mbox{et} \quad \widehat{F}_3=\overline{V_{\cI^c}}.
\label{fib.1}\end{equation}
Pour $i=1$, la description de $\widehat{F}_1$ est légèrement plus compliquée.  Soit $\widetilde{V_{\cI^c}\times\Im\bbH}$ la variété à bord topologiquement identifiée avec $\overline{V_{\cI^c}\times \Im\bbH}$, mais ayant pour fonctions lisses les fonctions lisses sur $V_{\cI^c}\times \Im\bbH$ admettant une développement lisse en $\pa\overline{V_{\cI^c}\times \Im\bbH}$ en puissances entières de $x^{\frac12}$ pour $x$ une fonction bordante de $\pa\overline{V_{\cI^c}\times \Im\bbH}$ dans $\overline{V_{\cI^c}\times \Im\bbH}$. Si $\widetilde{\{0\}\times \Im\bbH}$ dénote la fermeture de $\{0\}\times \Im\bbH$ dans $\widetilde{V_{\cI^c}\times\Im\bbH}$, alors les fibres de $\hphi_1$ sont données par
\begin{equation}
 \widehat{F}_1= [\widetilde{V_{\cI^c}\times\Im\bbH}; \pa\widetilde{\{0\}\times \Im\bbH}]. 
\label{fib.2}\end{equation}

D'autre part, sur $\hH_4$, qui correspond au relèvement de $\pa(\overline{\bbH^d\times \Im\bbH})$, on peut poser $\hS_4=\hH_4$ et prendre pour fibré naturel $\hphi_4:\hH_4\to \hS_4$ l'application identité.  Ces fibrés $\hphi= (\hphi_1,\hphi_2,\hphi_3,\hphi_4)$ induisent une structure de fibrés itérés sur $\hW$ avec ordre partiel sur $\cM_1(\hW)$ spécifié par
$$
    \hH_1<\hH_3<\hH_4, \quad \hH_1<\hH_4, \quad \hH_2<\hH_3<\hH_4, \quad \mbox{et} \quad \hH_2<\hH_4.
$$

En prenant en compte le changement de structure lisse lors du passage de $W$ à $\tW$, remarquons aussi que la fonction $\rho$ de distance par rapport à l'origine sur $\bbH^d\times \Im\bbH$ induit sur $\hW$ une distance $\gp$-tordue avec fonction de pondération donnée par
\begin{equation}
  \gp(\hH_1)=0, \quad \gp(\hH_2)= \gp(\hH_3)=\gp(\hH_4)=\frac12.
\label{dtn.14b}\end{equation}
La structure de fibrés itérés et ce choix de distance $\gp$-tordue induit sur $\hW$ une algèbre de Lie de champs vectoriels $\CI(\hW; {}^{\cT}T\hW)$ $\gp$-tordus $\QAC$.  
\begin{lemme}
La métrique euclidienne $g_{\bbH^d}+g_{\Im\bbH}$ correspond à une métrique $\gp$-tordue $\QAC$ exacte sur $\hW$.
\label{dtn.15}\end{lemme}
\begin{proof}
Comme dans \cite[\S~3.2]{CDR}, la métrique $g_{\bbH^d}+g_{\Im\bbH}$ correspond à une métrique $\QAC$ exacte sur $W$.  Sur $\tW$, on vérifie que ça devient une métrique $\mathfrak{q}$-tordue $\QAC$ exacte pour la fonction de pondération définie par $\mathfrak{q}(\tH_1)=0$ et $\mathfrak{q}(\tH_4)=\frac12$.  Enfin, on montre en utilisant des coordonnées locales qu'elle se relève bien en une métrique $\gp$-tordue $\QAC$ exacte sur $\hW$. 
\end{proof}

Soit $\hX_{\zeta}$ la fermeture de  $X_{\zeta}$ dans $\hW$.  
\begin{proposition}
Le sous-ensemble $\hX_{\zeta}$ est une  $p$-sous-variété de $\hW$.  De plus, la structure de fibrés itérés sur $\hW$ induit par restriction une structure de fibrés itérés sur $\hX_\zeta$.  En particulier, si $\gp$ dénote aussi la fonction de pondération induite par $\gp$ sur $\hX_\zeta$, alors $(g_{\bbH^d}+g_{\Im\bbH})$ induit par restriction une métrique $\gp$-tordue $\QAC$ exacte sur $\hX_{\zeta}$.  
\label{dtn.16}\end{proposition}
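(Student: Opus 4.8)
Le plan est de combiner l'approche de \cite[Theorem~4.6]{DR} pour le quotient par $N$ avec celle de \cite{CR,CR2023} pour la d\'eformation de Taub-NUT, en v\'erifiant la propri\'et\'e de $p$-sous-vari\'et\'e localement pr\`es de chaque hypersurface bordante de $\hW$. Je commencerais par observer que les deux derni\`eres affirmations sont des cons\'equences formelles de la premi\`ere, une fois celle-ci raffin\'ee pour inclure la compatibilit\'e avec les fibr\'es: si $\hX_{\zeta}$ est une $p$-sous-vari\'et\'e dont l'intersection avec chaque hypersurface bordante $\hH_i$ est l'espace total d'un fibr\'e $\hX_\zeta\cap\hH_i\to\Sigma_i$ sur une $p$-sous-vari\'et\'e $\Sigma_i\subset\hS_i$ obtenu par restriction de $\hphi_i$, alors la structure de fibr\'es it\'er\'es et l'ordre partiel de $\hW$ se restreignent \`a $\hX_{\zeta}$, et ${}^{\cT}T\hW$ se restreint en le fibr\'e ${}^{\cT}T\hX_{\zeta}$ associ\'e \`a la structure induite. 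Comme la fonction de pond\'eration $\gp$ et les fonctions bordantes se restreignent elles aussi, la restriction de la m\'etrique $\gp$-tordue $\QAC$ exacte $g_{\bbH^d}+g_{\Im\bbH}$ du Lemme~\ref{dtn.15} serait alors automatiquement $\gp$-tordue $\QAC$ exacte sur $\hX_{\zeta}$, exactement comme la restriction de $g_{\bbH^d}$ \`a $\widetilde{\mu_N^{-1}(-\zeta)}$ dans la preuve du Corollaire~\ref{qac.19}. Tout reviendrait donc \`a \'etablir la version raffin\'ee de la premi\`ere affirmation.

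Sur l'int\'erieur, comme not\'e avant l'\'enonc\'e, la surjectivit\'e de $d\mu_N$ (libert\'e de l'action de $N$, Remarque~\ref{cone.4}), celle de $d\mu_{\sigma}$ hors de $V_{\cI}$, et l'hypoth\`ese $\sigma(\bbR)\not\subset\gn$ garantissent d\'ej\`a que $X_{\zeta}$ est lisse et que sa fermeture est une $p$-sous-vari\'et\'e partout sauf le long de $\pa\overline{V_{\cI}\times\Im\bbH}$. Pr\`es des points de $\hH_4$, $\hH_2$ et $\hH_3$ situ\'es hors du rel\`evement de $\pa\overline{V_{\cI}}$, je v\'erifierais dans les coordonn\'ees $(u,\omega)$ que les \'equations rescal\'ees \eqref{dtn.10} ont une diff\'erentielle de rang maximal jusqu'en $u=0$, de sorte qu'elles d\'ecoupent une $p$-sous-vari\'et\'e dont la trace sur $\hH_4$ est donn\'ee par \eqref{dtn.10b}; la compatibilit\'e avec $\hphi_2$ et $\hphi_3$ se lit alors sur la d\'ecomposition \eqref{qac.15} et sur le fait que $\mu_{\sigma}$ ne d\'epend que de $q_{\cI^c}$.

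Le c\oe ur de l'argument, et la principale difficult\'e attendue, se situe pr\`es de $\hH_1$ et des coins $\hH_1\cap\hH_3$, $\hH_1\cap\hH_4$, l\`a o\`u les \'equations d\'eg\'en\`erent le long de $V_{\cI}$. J'introduirais des coordonn\'ees adapt\'ees \`a l'\'eclatement de $\pa\overline{V_{\cI}\times\{0\}}$: une coordonn\'ee radiale $\rho_{\cI}=|q_{\cI}|$ et une coordonn\'ee angulaire $\hat\omega\in\bbS(V_{\cI})$ param\'etrant la base $\hS_1$, et des coordonn\'ees fibr\'ees rescal\'ees $(\widehat{q}_{\cI^c},\widehat{q})$ d\'ecrivant la fibre $\widehat{F}_1=[\widetilde{V_{\cI^c}\times\Im\bbH};\widetilde{\{0\}\times\Im\bbH}]$. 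L'observation cl\'e que je m'attends \`a exploiter est que, $\mu_{\sigma}$ \'etant quadratique et homog\`ene en $q_{\cI^c}$, le facteur $\rho_{\cI}$ du rescaling compense exactement la relation $|q|\sim|q_{\cI^c}|^2$ et l'\'equation $\mu_{\sigma}(q_{\cI^c})=q$ se rel\`eve en l'\'equation non d\'eg\'en\'er\'ee $\mu_{\sigma}(\widehat{q}_{\cI^c})=\widehat{u}\,\widehat{q}$ sur la fibre, o\`u $\widehat{u}$ est la fonction bordante de la face \`a l'infini de $\widehat{F}_1$; cette \'equation est ind\'ependante de la fonction bordante de $\hH_1$, ce qui fournit la transversalit\'e voulue. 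L'\'equation $\mu_N(m)=-\zeta$ se traiterait fibre \`a fibre via la structure de stabilisateur $N_{\cI}$, qui satisfait \`a l'Hypoth\`ese~\ref{qac.7} par le Lemme~\ref{qac.11c}, ce qui autorise une r\'ecurrence sur la profondeur de $\bbH^d$ comme $N$-espace dans l'esprit de \cite[Theorem~4.6]{DR}; pr\`es de $\hH_1$, la fibre $\widehat{F}_1$ est pr\'ecis\'ement la compactification de type $\hW$ associ\'ee au cas de base o\`u le lieu fixe de $\bbR$ se r\'eduit \`a l'origine (\cf Remarque~\ref{dtn.14}), la structure lisse en puissances demi-enti\`eres y jouant le r\^ole du dispositif de Taub-NUT de \cite{CR2023}. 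Le point technique d\'elicat sera de confirmer qu'aux coins $\hH_1\cap\hH_4$ et $\hH_1\cap\hH_3$, la cleanness de l'\'equation en $\mu_{\sigma}$, la structure demi-enti\`ere h\'erit\'ee de $\tW$ et l'\'equation en $\mu_N$ gouvern\'ee par $N_{\cI}$ sont mutuellement transverses, de sorte que $\hX_{\zeta}$ reste une $p$-sous-vari\'et\'e et que $\hphi_1$ induise bien par restriction le fibr\'e $\hX_{\zeta}\cap\hH_1\to\Sigma_1$.
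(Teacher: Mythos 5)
Votre r\'eduction des deux derni\`eres affirmations \`a une version raffin\'ee de la premi\`ere (les \'equations se scindant, sur chaque $\hH_i$, en \'equations sur la base $\hS_i$ et en familles d'\'equations sur les fibres) correspond bien \`a la fa\c{c}on dont l'article conclut, et votre m\'ecanisme pr\`es de $\hH_1$ --- l'homog\'en\'eit\'e quadratique de $\mu_\sigma$ compens\'ee par un rescaling en racine carr\'ee --- est essentiellement celui des \'equations \eqref{dtn.17}--\eqref{dtn.19}. Le point qui ne tient pas est votre deuxi\`eme paragraphe. Vous y affirmez que, pr\`es des points de $\hH_2$ et $\hH_3$ hors du rel\`evement de $V_\cI\times\Im\bbH$, la diff\'erentielle des \'equations \eqref{dtn.10} est de rang maximal jusqu'en $u=0$ dans les coordonn\'ees $(u,\omega)$. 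C'est faux : tout point int\'erieur de $\hH_2$ (resp. de $\hH_3$) s'envoie par l'application de contraction sur un point de $\pa\overline{\{0\}\times\Im\bbH}$ (resp. de $\pa\overline{V_\cI\times\Im\bbH}$), c'est-\`a-dire un point o\`u $\omega_{\bbH^d}=0$ (resp. $\omega_{\cI^c}=0$). Comme $\mu_\sigma$ est quadratique et ne d\'epend que de $q_{\cI^c}$, sa diff\'erentielle s'annule en tout point de ce lieu (et celle de $\mu_N$ s'annule en $\omega_{\bbH^d}=0$), de sorte que le rang du syst\`eme \eqref{dtn.10} y chute : c'est pr\'ecis\'ement la d\'eg\'en\'erescence que les \'eclatements cr\'eant $\hH_2$ et $\hH_3$ servent \`a r\'esoudre. \^{E}tre hors du transform\'e strict $\widehat{V_\cI\times\Im\bbH}$ n'y change rien : les coordonn\'ees $(u,\omega)$ ne s\'eparent pas les points de ces faces, et la propri\'et\'e de $p$-sous-vari\'et\'e dans $\hW$ ne peut pas se lire en bas, l\`a o\`u $\bX_\zeta$ n'est justement pas une $p$-sous-vari\'et\'e.

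Il faut donc, pour ces deux faces aussi, des coordonn\'ees rescal\'ees : sur l'int\'erieur de $\hH_2$, les coordonn\'ees \eqref{dtn.20}, $Q_\cI=\omega_\cI/u^{1/2}$ et $Q_{2,\cI^c}=\omega_{\cI^c}/u^{1/2}$, transforment \eqref{dtn.10} en \eqref{dtn.21}, dont la non-d\'eg\'en\'erescence utilise $\omega_{\Im\bbH}\ne 0$ (d'o\`u $Q_{2,\cI^c}\ne 0$ sur les solutions), la libert\'e de l'action de $N$ sur $\mu_N^{-1}(0)\setminus\{0\}$, la libert\'e locale de l'action de $\bbR$ hors de $V_\cI$ et l'hypoth\`ese $\sigma(\bbR)\not\subset\gn$ ; ensuite, pr\`es du coin $\hH_2\cap\hH_3$ --- que votre plan ne traite nulle part --- une nouvelle d\'eg\'en\'erescence appara\^it en $\pa\overline{V_\cI}$ et se r\'esout par le passage aux coordonn\'ees \eqref{dtn.22}--\eqref{dtn.23}. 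Signalons enfin un point secondaire : pr\`es de $\hH_1$, l'\'equation en $\mu_N$ se restreint en l'\'equation \eqref{dtn.18b}, $\mu_{N,\cI}(\omega_\cI)=0$, qui vit sur la base $\hS_1$ et non sur les fibres, et qui se traite directement par la libert\'e de l'action de $N$ restreinte \`a $V_\cI$ (\cf Remarque~\ref{cone.4}) ; dans le cas AC consid\'er\'e ici, aucune r\'ecurrence sur la profondeur ni aucun recours au stabilisateur $N_\cI$ n'est n\'ecessaire, contrairement \`a ce que sugg\`ere votre traitement \og fibre \`a fibre \fg.
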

\begin{proof}
Soient $q_{\cI}$ $q_{\cI^c}$ et $q_{\Im\bbH}$ les variables quaternioniques sur $V_{\cI}$, $V_{\cI^c}$ et $\Im\bbH$ respectivement.  Soient $\rho$ et $\omega=(\omega_{\cI},\omega_{\cI^c},\omega_{\Im\bbH})\in \pa\overline{\bbH^d\times \Im\bbH}$ des coordonnées sphériques sur $\bbH^d\times \Im\bbH$ en termes de la décomposition $\bbH^d\times \Im\bbH= V_{\cI}\times V_{\cI^c}\times\Im\bbH$, c'est-à-dire que
$$
    \omega_{\cI}=\frac{q_{\cI}}{\rho}, \quad \omega_{\cI^c}=\frac{q_{\cI^c}}{\rho} \quad \mbox{et} \quad
    \omega_{\Im \bbH}= \frac{q_{\Im\bbH}}{\rho}.
$$
Comme on l'a vu précédemment, sur $\overline{\bbH^d\times \Im\bbH}$, les équations \eqref{dtn.10} dégénèrent le long de $\overline{V_{\cI}\times \Im \bbH}$.  Voyons si les éclatements introduits pour définir $\hW$ résolvent ces singularités.

Si $u=\rho^{-1}$, alors en termes des coordonnées $(u,\omega)$ près de $\pa\overline{\bbH^d\times \Im\bbH}$ sur $\overline{\bbH^d\times \Im\bbH}$,  
l'éclatement de $\pa\overline{ V_{\cI}\times \{0\}}$  correspond à remplacer $\omega_{\cI^c}$ et $\omega_{\Im\bbH}$ par les coordonnées $q_{\cI^c}$ et $q_{\Im\bbH}$, en quel cas la première équation de \eqref{dtn.10} devient
\begin{equation}
  \mu_\sigma(q_{\cI^c})= q_{\Im\bbH}.
\label{dtn.17}\end{equation}
Cela correspond au graphe de $\mu_\sigma|_{V_{\cI^c}}$ à l'intérieur de chaque fibre de $\hphi_{1}:\hH_1\to \hS_1$.  D'autre part, par \eqref{am.1} et \eqref{am.2}, on voit que
$$
     \mu_N(q_{\cI},q_{\cI^c})= \mu_{N,\cI}(q_{\cI})+ \mu_{N,\cI^c}(q_{\cI^c}),
$$
où $\mu_{N,\cI}:= \mu_N|_{V_{\cI}}$ est une application moment hyperhamiltonienne pour l'action de $N$ restreinte à $V_{\cI}$.  En termes de cette décomposition, la deuxième équation de \eqref{dtn.10} devient donc
\begin{equation}
    \mu_{N,\cI}(\omega_{\cI})+ u^2\mu_{N,\cI^c}(q_{\cI^c})=-u^2\zeta.
\label{dtn.17b}\end{equation}
Or, sur $\overline{V_{\cI^c}\times\Im \bbH}$, l'équation \eqref{dtn.17} définit une $p$-sous-variété, sauf sur $\pa\overline{\{0\}\times \Im\bbH}\subset \overline{V_{\cI^c}\times \Im\bbH}$.  En effet, près de $\pa \overline{V_{\cI^c}\times \Im\bbH}$ dans les coordonnées sphériques $(r,\omega_{1,\cI^c},\omega_{1,\Im\bbH})$ sur $V_{\cI^c}\times \Im\bbH$, l'équation \eqref{dtn.17} prend la forme
\begin{equation}
  \mu_{\sigma}(\omega_{1,\cI^c})=\frac{\omega_{1,\Im\bbH}}{r}.
\label{dtn.18}\end{equation}   
D'autre part, restreinte à $\hH_1$, l'équation \eqref{dtn.17b} devient
\begin{equation}
 \mu_{N,\cI}(\omega_{\cI})=0.
\label{dtn.18b}\end{equation}
Cela correspond à une équation dans la base du fibré $\hphi_1: \hH_1\to \hS_1$.  Comme $N$ agit librement sur $\mu_N^{-1}(0)\setminus \{0\}$, la restriction de cette action à $(\mu_N^{-1}(0)\cap V_{\cI})\setminus \{0\}=(\mu_{N,\cI}^{-1}(0)\cap V_{\cI})\setminus \{0\}$ est aussi libre, ce qui montre que la différentielle de $\mu_{N,\cI}$ y est en tout point surjective et que l'équation \eqref{dtn.18b} définit bien une sous-variété de $\hS_1$.  D'autre part, l'équation \eqref{dtn.18} induit une équation sur l'intérieur des fibres $\widehat{F}_1$ du fibré $\hphi_1$ décrites en \eqref{fib.2}.
  En termes de cette description, la différentielle de $\mu_{\sigma}$ ne s'annule qu'en $\{0\}\times \Im\bbH$, ce qui montre que le long de $\pa ( \widetilde{V_{\cI^c}\times \Im\bbH})$ dans $\widetilde{V_{\cI^c}\times \Im\bbH}$, l'équation \eqref{dtn.18} ne dégénère qu'en $\pa\widetilde{\{0\}\times \Im\bbH}$.  Dans la description \eqref{fib.2} de $\widehat{F}_1$, l'éclatement de $\pa\widetilde{\{0\}\times \Im\bbH}$ correspond à introduire les coordonnées 
$$
      \xi:= r^{-\frac12}, \quad Q_{\cI^c}:= r^{\frac12}\omega_{1,\cI^c}, \quad \omega_{1,\Im\bbH},
$$ 
en quel cas \eqref{dtn.18} et \eqref{dtn.17b} prennent la forme 
\begin{equation}
   \mu_{\sigma}(Q_{\cI^c})= \omega_{1,\Im\bbH}\quad \mbox{et} \quad \mu_{N,\cI}(\omega_{\cI})+ \frac{u^2}{\xi^2}\mu_{N,\cI^c}(Q_{\cI^c})=-u^2\zeta,
\label{dtn.19}\end{equation}
où $\frac{u}{\xi^2}$ est une fonction bordante pour $\hH_1$.    
En particulier, ces équations définissent une $p$-sous-variété près de l'intérieur de $\hH_1\cap \hH_3$, car l'action de $\bbR$ est localement libre sur $V_{\cI^c}\setminus \{0\}$ et $\omega_{1,\Im\bbH}\ne 0$ près de $\hH_1\cap \hH_3$. En remplaçant $Q_{\cI^c}$ par des coordonnées sphériques sur $V_{\cI^c}$, on vérifie aussi que $\hX_\zeta$ est bien une $p$-sous-variété dans un voisinage de $\hH_1\cap\hH_3$, c'est-à-dire prés de $\hH_4$. En fait, les équations \eqref{dtn.19} montrent que $\hX_{\zeta}$ est une $p$-sous-variété près de $\hH_3\setminus (\hH_2\cap\hH_3)$.

Toutefois, l'équation \eqref{dtn.18b} dans la base $\hS_3$ dégénère en $\omega_{\cI}=0$ si on omet l'éclatement engendrant $\hH_2$.  En fait, sur l'intérieur de $\hH_2$, l'éclatement de $(\widetilde{\{0\}\times \Im\bbH})\cap \tH_4$ correspond à introduire les coordonnées 
\begin{equation}
 Q_{\cI}= \frac{\omega_{\cI}}{u^{\frac12}}, \quad Q_{2,\cI^c}= \frac{\omega_{\cI^c}}{u^{\frac12}}, \quad \omega_{\Im\bbH}, \quad u^{\frac12},
\label{dtn.20}\end{equation}
en quel cas les équations \eqref{dtn.10}  prennent la forme
\begin{equation}
 \mu_{\sigma}(Q_{2,\cI^c})=\omega_{\Im\bbH} \quad \mbox{et} \quad \mu_N(Q_{\cI}, Q_{2,\cI^c})=-u\zeta.
\label{dtn.21}\end{equation}
Comme $\omega_{\Im\bbH}\in \bbS(\Im\bbH)\cong \pa\overline{\{0\}\times \bbH}=\hS_2$ dans ces équations, on a en particulier que $\omega_{\Im\bbH}\ne 0$.  Comme $\mu_{\sigma}(0)=0$, on en déduit que $Q_{2,\cI^c}\ne 0$ pour les solutions de \eqref{dtn.21}.  Or, l'action de $N$ sur $\mu^{-1}_N(0)\setminus\{0\}$ étant libre, la différentielle de $\mu_N$ est surjective lorsqu'évaluée en un point de $\mu_N^{-1}(0)\setminus \{0\}$.  De même, l'action de $\bbR$ étant localement libre sur $\bbH^d\setminus V_\cI$, la différentielle de $\mu_\sigma$ est surjective lorsqu'évaluée en un point où $Q_{2,\cI^c}\ne 0$.  Ajouté au fait que $\sigma(\bbR)$ n'est pas contenu dans $\gn$,  cela montre que les équations \eqref{dtn.21} définissent bien une sous-variété sur l'intérieur de $\hH_2$ et sur l'intérieur de chaque fibre de $\hphi_2$.  En utilisant les coordonnées $Q_{\cI},Q_{2,\cI^c}$, l'intérieur de ces fibres s'identifie avec $\bbH^d$, et les fibres elles-mêmes avec $[\overline{\bbH^d};\pa\overline{V}_{\cI}]$ par \eqref{fib.1}. 

Pour $\omega_{\Im\bbH}\in \hS_2$ fixé, les équations \eqref{dtn.21} dégénèrent précisément en $\pa\overline{V_{\cI}}$ le long de $\pa\overline{\bbH^d}$.  Pour le voir, on peut utiliser les coordonnées sphériques
$$
     \rho_2:= \sqrt{|Q_{\cI}|^2+|Q_{2,\cI^c}|^2}, \quad \omega_{2,\cI}:= \frac{Q_{\cI}}{\rho_2}, \quad \omega_{2,\cI^c}:= \frac{Q_{2,\cI^c}}{\rho_2},
$$
en quel cas les équations \eqref{dtn.21} prennent la forme
\begin{equation}
\mu_{\sigma}(\omega_{2,\cI^c})=u_2^2\omega_{\Im\bbH}, \quad \mu_N(\omega_{2,\cI}, \omega_{2,\cI^c})=-u_2^2u\zeta,
\label{dtn.22}\end{equation}
où $u_2:= \rho_2^{-1}$ est une fonction bordante pour $\pa \overline{\bbH^d}$ dans $\overline{\bbH^d}$.  Or l'éclatement de $\pa \overline{V}_{\cI}$ correspond à remplacer $\omega_{2,\cI^c}$ par $Q_{2,\cI^c}=\frac{\omega_{2,\cI^c}}{u_2}$, en quel cas les équations \eqref{dtn.22} deviennent
\begin{equation}
\mu_{\sigma}(Q_{2,\cI^c})=\omega_{\Im\bbH} \quad \mbox{et} \quad   \mu_{N,\cI}(\omega_{2,\cI})+ u_2^2\mu_{N,\cI^c}(Q_{2,\cI^c})=-u_2^2u\zeta.
\label{dtn.23}\end{equation}

Comme la restriction de l'action de $N$ à $V_{\cI}$ est aussi libre sur $V_{\cI}\setminus \{0\}$ et comme $\omega_{2,\cI}\ne 0$ puisque $\omega_{2,\cI}\in \pa \overline{V_{\cI}}$, la différentielle de $\mu_{N,\cI}$ est bien surjective lorsque restreinte aux solutions de \eqref{dtn.23} près de $u_2=0$.  Ces équations définissent donc bien une $p$-sous-variété près de l'intérieur de $\hH_2\cap \hH_3$. En remplaçant $Q_{2,\cI^c}$ par des coordonnées sphériques, on vérifie aisément que ça reste le cas près de $\hH_4$.  

Cela montre que $\hX_\zeta$ est bien une $p$-sous-variété de $\hW$.  Comme les équations définissant ces $p$-sous-variétés se divisent sur chaque hypersurface bordante $\hH_i$ en des équations sur la base $\hS_i$ et des familles d'équations sur les fibres $\widehat{F}_i$ de $\hphi_i:\hH_i\to\hS_i$ paramétrées par la base, la structure de fibrés itérés de $\hW$ induit clairement par restriction une structure de fibrés itérés sur $\hX_{\zeta}$.

\end{proof}

Comme $\mu_{\sigma}=(\Id\otimes\sigma)^*\circ \mu$, remarquons que cette application est constante le long des orbites de l'action de $T^d$ sur $\bbH^d$.  En particulier, $\mu_\sigma$ est constante le long des orbites de $N$, ce qui montre que l'action de $N$ sur $\bbH^d\times \Im\bbH$ se restreint à une action sur $X_{\zeta}$.  La description de la compactification $\hX_\zeta$ donnée dans la preuve de la Proposition~\ref{dtn.16} montre que cette action se prolonge en une action de $N$ sur $\hX_{\zeta}$.  Pour $i\in\{1,3\}$, l'action de $N$ sur $\hH_i=\hF_i\times \hS_i$ est induite par une action de $N$ sur $\hF_i$ (elle-même induite par l'action de $N$ sur $V_{\cI^c}$) et une action de $N$ sur $\hS_i$ induite par celle de $N$ sur $V_\cI$.   Pour $i=2$, l'action de $N$ sur $\hH_2=\hF_2\times \hS_2$ est triviale sur $\hS_2$, tandis que sur $\hF_2$, elle est induite par celle sur $\bbH^d$.  Enfin, pour $i=4$, puisque $\hH_4=\hS_4$, l'action de $N$ sur $\hH_4$ correspond à une action sur la base $\hS_4$.    En particulier, pour chaque $i$, l'application $\hphi_i:\hH_i\to\hS_i$ est une application $N$-équivariante par rapport aux actions de $N$ sur $\hH_i$ et $\hS_i$.    

Par hypothèse, on sait que $N$ agit librement sur $\mu_N^{-1}(-\zeta)$, donc librement sur $\mu^{-1}_N(-\zeta)\times \Im\bbH$.  Puisque la sous-variété $X_{\zeta}$ est contenue dans $\mu^{-1}_N(-\zeta)\times \Im\bbH$, on en déduit que l'action de $N$ sur $X_{\zeta}$ est libre.  Par la preuve de la Proposition~\ref{dtn.16}, on sait que l'action de $N$ sur $\hH_i\cap \hX_{\zeta}$ est libre pour $i\in{1,3}$, puisqu'elle est libre sur   $\hphi_{i}(\hH_i\cap \hX_{\zeta})$.  On a vu aussi que l'action de $N$ sur $\hphi^{-1}_{2}(p)\cap \hX_{\zeta}$ est libre pour tout $p\in \hS_2$, donc libre sur $\hH_2\cap \hX_{\zeta}$.  Par la Remarque~\ref{cone.4}, elle est libre aussi sur $\hH_4\cap \hX_{\zeta}$.  On en déduit que $N$ agit librement sur $\hX_{\zeta}$.

Son action est aussi compatible avec la structure de fibrés itérés de $\widehat{X}_\zeta$, de sorte que le quotient
\begin{equation}
 \hM_\zeta:= \hX_\zeta/N
\label{dtn.23b}\end{equation}     
est naturellement une variété à coins fibrés avec structure de fibrés itérés induite par celle de $\hX_{\zeta}$.  Comme $N$ agit par isométries sur $\bbH^d\times \Im\bbH$, la métrique $(g_{\bbH^d}+g_{\Im\bbH})|X_{\zeta}$ induit par passage au quotient une métrique $\gp$-tordue $\QAC$ $g_\cT$ sur $\hM_{\zeta}$.  L'application moment hyperhamiltonienne $\mu_{\sigma}$ étant constante le long des orbites de l'action de $N$, elle définit par restriction sur $\hX_{\zeta}$ une application moment hyperhamiltonienne sur $\hM_{\zeta}$.

Soit $\xi_1\in \CI(\bbH^d\times\Im\bbH;T(\bbH^d\times \Im\bbH))$ le générateur infinitésimal de l'action de $\bbR$ sur $\bbH^d\times \Im\bbH$.  Comme cette action est induite par $\sigma:\bbR\hookrightarrow \gt^d$ et que $T^d\subset \Sp(d)\subset \SU(2d)$, le champ vectoriel $\xi_1$ est homogène de degré $0$ et est induit par un champ vectoriel sur la sphère unité $\bbS(\bbH^d\times \Im\bbH)$.  Cela montre que $\xi_1$ se prolonge en un champ vectoriel bordant sur $\overline{\bbH^d\times \Im\bbH}$ qu'on dénotera aussi par $\xi_1$.  Comme $V_{\cI}\times \Im\bbH$ est le sous-espace des points fixes de l'action de $\bbR$, on aura que $\overline{V_{\cI}\times \Im\bbH}$ est la sous-variété où le champ vectoriel $\xi_1$ s'annule.  Puisque $\pa\overline{V_{\cI}\times \{0\}}$ est une $p$-sous-variété de $\overline{V_{\cI}\times \Im\bbH}$, on en déduit que $\xi_1$ se relève en un champ vectoriel bordant sur $W$ tangent aux fibres du fibré induit par l'application de contraction sur l'hypersurface bordante $H_1$.  Il se relève aussi automatiquement en un champ vectoriel bordant $\txi_1$ sur $\tW$.   

Comme le champ vectoriel $\txi_1$ s'annule sur les $p$-sous-variétés qu'on éclate pour obtenir $\hW$, il se relève en un champ vectoriel bordant $\hxi_1$ sur $\hW$ tangent aux fibres de $\hphi_i: \hH_i\to \hS_i$ pour $i<4$.  Puisque $\xi_1$ est tangent à la sous-variété $X_\zeta$, le champ vectoriel $\hxi_1$ est tangent à la  $p$-sous-variété $\hX_\zeta$.  Clairement, $\hxi_1$ ne s'annule que sur la fermeture $\widehat{V_\cI\times \Im\bbH}$ de $V_\cI\times \Im\bbH$ dans $\hW$.  Or, par \eqref{dtn.19} et \eqref{dtn.21}, on voit que $\hX_\zeta\cap\hH_i$ est disjoint de $\widehat{V_{\cI}\times\Im\bbH}$ pour $i>1$.  Ainsi, près de $(\hH_2\cup\hH_3\cup\hH_4)\cap \hX_{\zeta}$, le champ vectoriel $\hxi_1$ engendre un $(\hphi,\gp)$-feuilletage $\cF$ sur $\hX_{\zeta}$.  Bien sûr, les feuilles de $\cF$ correspondent aux orbites de l'action de $\bbR$ naturellement prolongée à $\hX_{\zeta}$.  

Comme l'action de $\bbR$ commute avec celle de $N$, on a une action induite de $\bbR$ sur le quotient $\hM_{\zeta}$ et $N$ agit sur le feuilletage $\cF$.  D'autre part, comme $\sigma(\bbR)$ n'est pas contenu dans $\gn$, les feuilles de $\cF$ sont transverses aux orbites de l'action de $N$.  Le feuilletage $\cF$ induit donc sur $\hM_{\zeta}$ un feuilletage $\cF/N$.  Si on dénote aussi par $\hphi$ et $\gp$ la structure de fibrés itérés et la fonction de pondération de $\hM_{\zeta}$ induites par celles de $\hX_{\zeta}$, alors $\cF/N$ est aussi un $(\hphi,\gp)$-feuilletage sur $\hM_{\zeta}$.

Comme $\bbR$ et $N$ agissent par isométries sur $\bbH^d\times \Im\bbH$, cela signifie que la distance $\rho$ par rapport à l'origine dans $\bbH^d\times \Im\bbH$ est constante le long des feuilles de $\cF$, ainsi que le long des orbites de $N$.  La fonction $\rho$ induit donc une fonction sur le quotient $\hM_{\zeta}$. On en déduite que le feuilletage $\cF/N$ et la fonction $\rho$ vue comme une fonction de distance $\gp$-tordue sur $\hM_{\zeta}$  définissent une classe de métriques $(\cF/N,\gp)$-tordues quasi-feuilletées au bord sur $\hX_{\zeta}$.   Cela nous permet de formuler le résultat principal de cette section.

\begin{theoreme}
Soient $(M_\zeta,g_{\zeta})= (\mu^{-1}(-\zeta)/N,g_\zeta)$ une métrique hyperkählérienne torique asymptotiquement conique du Corollaire~\ref{cone.3} et $\sigma:\bbR\hookrightarrow \gt^d$ une application linéaire dont l'image n'est pas contenue dans l'algèbre de Lie $\gn$ de $N$.  Alors la déformation de Taub-NUT  $g_{\zeta,\TN_\sigma}$ de  $g_{\zeta}$ est une métrique $(\cF/N,\gp)$-tordue quasi-feuilletée au bord sur la variété à coins fibrés $\hM_\zeta=\hX_{\zeta}/N$.  En particulier, ce résultat montre que $g_{\zeta,\TN_\sigma}$ est complète à géométrie bornée.    
\label{dtn.25}\end{theoreme}
\begin{proof}
Par la Proposition~\ref{dtn.16}, $(g_{\bbH^d}+g_{\Im\bbH})|_{\hX_{\zeta}}$ est une métrique $\gp$-tordue $\QAC$ exacte sur $\hX_{\zeta}$, ainsi que la métrique induite $g_\cT$ sur $\hM_{\zeta}$.  Par le Lemme~\ref{dtn.7}, la déformation de Taub-NUT transforme cette métrique en une métrique $(\cF/N,\gp)$-tordue quasi-feuilletée au bord pour le $(\hphi,\gp)$-feuilletage $\cF/N$ induit par les orbites de l'action de $\bbR$ sur $\hM_{\zeta}$.  Par \cite{ALN04,Bui}, cette métrique est complète à géométrie bornée.
\end{proof}

Le Théorème~\ref{dtn.25} montre entre autres que la courbure de $g_{\zeta,\TN_\sigma}$ est bornée, ainsi que toutes ses dérivées.  En utilisant la formule d'O'Neill, \cf  \cite{Bielawski2008} et \cite[Proposition~3.3]{Min2025}, on peut toutefois obtenir une estimation plus fine de la courbure sectionnelle comme suit.
\begin{corollaire}
La courbure sectionnelle $K_{g_{\zeta,\TN_\sigma}}$ de la déformation de Taub-NUT $g_{\zeta,\TN_\sigma}$ est telle que
$$
   |K_{g_{\zeta,\TN_\sigma}}|=\mathcal{O}(x_{4}^2\rho^{-2\gp})=\mathcal{O}(x_{4}^2v_{\frac12}^2).
$$
En particulier, si $V_{\cI}=\{0\}$, alors $\hH_1=\hH_3=\emptyset$ et 
$$
 |K_{g_{\zeta,\TN_\sigma}}|=\mathcal{O}(x_4^2\rho^{-1}).
$$ 
Cependant, si $V_{\cI}\ne\{0\}$, la courbure sectionnelle ne décroît pas partout vers zéro lorsqu'on s'approche de $(\hH_1\cap \hX_{\zeta})/N$.  
\label{dtn.26}\end{corollaire}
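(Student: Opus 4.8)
Le plan est d'appliquer la formule de submersion d'O'Neill (\cf \cite{Bielawski2008} et \cite[Proposition~3.3]{Min2025}) au modèle local \eqref{tqfb.17} de la métrique $(\cF/N,\gp)$-tordue quasi-feuilletée au bord fourni par le Théorème~\ref{dtn.25}. Dans un ouvert où les feuilles de $\cF/N$ correspondent aux fibres d'un fibré $\psi$, le Lemme~\ref{dtn.7} montre que $g_{\zeta,TN_\sigma}=\psi^*g_{E^\perp}+\frac{\eta_1^2}{V_1+1}$ est une submersion riemannienne à fibres de dimension $1$ au-dessus d'une base munie d'une métrique $\gp$-tordue $\QFB$. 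La courbure sectionnelle de l'espace total se décompose alors en la courbure de la base, plus des termes faisant intervenir le tenseur d'O'Neill $A$ (courbure de la connexion, de l'ordre de $d\eta_1$) et le tenseur $T$ (seconde forme fondamentale des fibres, de l'ordre de $d\log(V_1+1)$).

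La première étape serait d'estimer la courbure de la base. Comme $\hM_\zeta$ est de type $\QAC$ avec unique hypersurface bordante maximale $\hH_4$, la base hérite d'une telle structure de fonction bordante maximale $x_4$, et la Remarque~\ref{tqfb.12b} donne directement $\mathcal{O}(x_4^2\rho^{-2\gp})$ pour la courbure de sa métrique $\gp$-tordue $\QFB$.

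La deuxième étape, principale pour la majoration, est de montrer que les termes d'O'Neill sont eux aussi $\mathcal{O}(x_4^2\rho^{-2\gp})$. Le point clef est que $\xi_1$ est homogène de degré $0$, d'où $g(\xi_1,\xi_1)\sim\rho^2$ et $V_1=\mathcal{O}(\rho^{-2})$ là où l'action de $\bbR$ est localement libre, soit précisément près de $\hH_2\cup\hH_3\cup\hH_4$ où le feuilletage est défini. Dans ces régions $V_1\to 0$, la longueur des fibres tend vers $1$, et un calcul dans les modèles locaux \eqref{dtn.17}, \eqref{dtn.18} et \eqref{dtn.23} des équations de $\hX_\zeta$ montre que les contributions de $d\eta_1$ et de $d\log(V_1+1)$ à $|K_{g_{\zeta,TN_\sigma}}|$ décroissent au moins aussi vite que $x_4^2\rho^{-2\gp}$. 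Combiné à la première étape, ceci établit $|K_{g_{\zeta,TN_\sigma}}|=\mathcal{O}(x_4^2\rho^{-2\gp})=\mathcal{O}(x_4^2v_{\frac12}^2)$. Le cas $V_\cI=\{0\}$ en découle formellement: alors $\hH_1=\hH_3=\emptyset$, les hypersurfaces restantes $\hH_2$ et $\hH_4$ portent le poids $\nu=\frac12$, et comme $\frac{2\nu}{1-\nu}=\frac{1}{1-\nu}$ pour $\nu=\frac12$, on a $\rho^{-2\gp}=\rho^{-1}$, d'où $\mathcal{O}(x_4^2\rho^{-1})$.

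Le principal obstacle est d'établir la non-décroissance près de $(\hH_1\cap\hX_\zeta)/N$ lorsque $V_\cI\ne\{0\}$, c'est-à-dire de transformer la borne supérieure en une minoration effective. Comme $\gp(\hH_1)=0$, la borne $x_4^2\rho^{-2\gp}$ ne fait pas intervenir $x_1$ et reste $\mathcal{O}(1)$ non nulle à l'intérieur de $\hH_1$; il reste donc à exhiber un $2$-plan dont la courbure sectionnelle est minorée. L'argument serait le suivant: à l'intérieur de $\hH_1\cap\hX_\zeta$ on est à grand rayon dans les directions de $V_\cI$ (le lieu fixe de l'action de $\bbR$) mais à échelle finie dans les directions des fibres $\hF_1$ de \eqref{fib.2}, à savoir celles de $V_{\cI^c}$ et de $\Im\bbH$, où par \eqref{dtn.17} on a $q_{\cI^c}\ne 0$. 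L'action infinitésimale annulant $V_\cI$, la quantité $g(\xi_1,\xi_1)$ y est bornée et génériquement non nulle, donc $V_1=\mathcal{O}(1)$ reste fini: la fibre de Taub-NUT ne s'effondre pas et la déformation $\frac{\eta_1^2}{V_1+1}$ produit, via le tenseur $A\sim d\eta_1$, une contribution à la courbure sectionnelle de l'ordre de $1$ dans les $2$-plans contenant $\xi_1$ et une direction de $V_{\cI^c}$ ou $\Im\bbH$, exactement comme la courbure concentrée près du \og nut \fg d'un espace de Taub-NUT classique. Un calcul explicite de cette contribution, montrant qu'elle admet une minoration strictement positive en au moins un tel $2$-plan, achèverait la preuve; c'est cette minoration, par opposition aux majorations précédentes, qui demande le plus de soin.
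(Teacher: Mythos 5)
Your upper bound runs O'Neill's formula in the opposite direction from the paper, and that choice is what leaves you with estimates you assert but never derive. The paper realizes $g_{\zeta,TN_\sigma}$ as the \emph{base} of the Riemannian submersion $(\hM_\zeta\times\bbR,\,g_\cT+g_{\bbR})\to(\hM_\zeta\times\bbR)/\bbR$: the total space is then the twisted $\QAC$ metric plus a flat factor, whose curvature is already $\cO(x_4^2\rho^{-2\gp})$ by Remarque~\ref{tqfb.12b}, and \cite[9.29c]{Besse} only adds the term $\tfrac34|[\widetilde{X},\widetilde{Y}]^v|^2$; this bracket is controlled by noting that $\frac{\hV_1\hxi_1}{1+\hV_1}\in v_{\frac12}\cV_{\nQFB}(\hM_\zeta)$ (because $V_1\in x_4^2v_{\frac12}^2\CI(\hM_\zeta)$) and invoking Lemme~\ref{tqfb.12d}, which places such brackets in $x_{\max}v_{\frac12}(v_{\frac12}\cV_{\nQFB}(\hM_\zeta))$, hence of norm $\cO(x_4v_{\frac12})$ for the twisted metric. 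In your setup ($g_{\zeta,TN_\sigma}$ as total space fibering over the local leaf space), you must instead bound the curvature of the quotient metric on the leaf space \emph{and} the O'Neill tensors $A$ and $T$; your claim that a computation in the local models shows these contributions decay at least like $x_4^2\rho^{-2\gp}$ is precisely the content of the estimate, stated rather than proved. This half could plausibly be repaired (most cleanly by switching to the paper's submersion, where the bracket lemma does the work), so I would call it incomplete rather than wrong; your derivation of $\rho^{-2\gp}=\rho^{-1}$ when $V_\cI=\{0\}$ is correct.

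The genuine gap is the third assertion, the non-decay of the curvature near $(\hH_1\cap\hX_\zeta)/N$, and you acknowledge it yourself: you end by saying that an explicit computation giving a strictly positive lower bound on some $2$-plane \emph{would} complete the proof. What you offer in its place (that $V_1=\cO(1)$ there, so the Taub-NUT circle does not collapse, and that the $A$-tensor term $d\eta_1$ "should" contribute at order $1$, as near the nut of a Taub-NUT space) is not a lower bound: O'Neill terms can vanish at particular points or cancel against the base curvature, and nothing in your argument excludes that the limiting fiber geometry is flat or has decaying curvature --- which is exactly what must be excluded. The paper closes this by an indirect, computation-free argument: by \eqref{dtn.17}, the fibers of $(\hH_1\cap\hX_\zeta)/N$ carry the Taub-NUT deformation $g_{V_{\cI^c},\TN_\sigma}$ of the flat metric on $V_{\cI^c}$ (no residual quotient by a subgroup of $N$ occurs, since $N$ acts freely on the base cut out by \eqref{dtn.18b}); by Th\'eor\`eme~\ref{dtn.25} and Lemme~\ref{vg.1} this complete, simply connected metric has volume growth of order $\dim V_{\cI^c}-1$, hence non-maximal, so it cannot be flat, since a flat complete simply connected manifold is Euclidean and has maximal volume growth; finally, in the exact product-type model \eqref{cb.1} near $\hH_1$, this fiber metric enters through $\kappa$ at unit scale, independently of $u_1$, so its non-vanishing curvature survives as $u_1\searrow 0$ and $|K_{g_{\zeta,TN_\sigma}}|$ cannot tend to zero everywhere there. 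This volume-growth/flatness argument is the missing idea that turns your heuristic into a proof.
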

\begin{proof}
Soit $g_\cT$ la métrique $\gp$-tordue $\QAC$ sur $\hM_{\zeta}$ induite par restriction de $g_{\bbH^d}+g_{\Im\bbH}$ sur $\hX_{\zeta}$ et passage au quotient par l'action de $N$.  Alors le quotient de l'action de $\bbR$ sur $\hM_{\zeta}\times \bbR$ avec $\bbR$ agissant par translation dans le deuxième facteur induit une submersion riemannienne de $(\hM_{\zeta}\times \bbR, g_\cT+g_{\bbR})$ vers $((\hM_{\zeta}\times \bbR)/\bbR,g_{\zeta,\TN_{\sigma}})$. Par la formule de O'Neill \cite[9.29c]{Besse}, on a l'estimation 
\begin{equation}
    |K_{g_{\zeta,\TN_{\sigma}}}(X,Y)- K_{g_\cT+g_\bbR}(\widetilde{X},\widetilde{Y})|= \frac{3}{4} |[\widetilde{X},\widetilde{Y}]^v|^2_{g_\cT+g_\bbR}  \le \frac{3}{4} |[\widetilde{X},\widetilde{Y}]|^2_{g_\cT+g_\bbR},
\label{dtn.27}\end{equation}
où $X,Y$ sont des champs vectoriels sur $(\hM_\zeta\times\bbR)/\bbR$,  $\widetilde{X}$ et $\widetilde{Y}$ sont des relèvements horizontaux sur $\hM_{\zeta}\times \bbR$ et  $[\widetilde{X},\widetilde{Y}]^v$ est la partie verticale du crochet de Lie $[\widetilde{X},\widetilde{Y}]$.   

Dans la notation du Lemme~\ref{dtn.7}, soit $\hE^{\perp}:=T(\cF/N)^{\perp}$ le prolongement naturel de $E^{\perp}$ à $\hM_{\zeta}$ en tant que sous-fibré vectoriel de ${}^{\cF/N,\cT}T\hM_{\zeta}$, de sorte que
\begin{equation}
   {}^{\cF/N,\cT}T\hM_{\zeta}= \hE^{\perp}\oplus T(\cF/N).
\label{dtn.27b}\end{equation}
  Alors en prenant $X\in \CI(\hM_{\zeta};\hE^\perp)$ dans \eqref{dtn.27}, on a que $\widetilde{X}=(X,0)$ sur $\hM_{\zeta}\times \bbR$.  Par la preuve du Lemme~\ref{dtn.7}, on sait aussi que près de $((\hH_2\cup\hH_3\cup\hH_4)\cap\hX_\zeta)/N$, le champ vectoriel
$$
   \widetilde{Z}:= \frac{\hV_1 \hxi_1-e_1}{1+\hV_1}
$$
est horizontal et unitaire sur $(\hM_{\zeta}\times \bbR, g_\cT+g_{\bbR})$ avec projection $Z$ transverse à $\hE^\perp$ correspondant à $\hxi_1$ sur $\hM_{\zeta}\cong (\hM_{\zeta}\times \bbR)/\bbR$, où $\hV_1:=\frac{1}{g_\cT(\hxi_1,\hxi_1)}$ et $\{e_1\}$ est la base canonique de $\bbR$.  

Clairement, pour obtenir le résultat, il suffit donc de montrer que 
\begin{equation}
|K_{g_{\zeta,\TN_{\sigma}}}(X,Y)|= \cO(x_4^2v_{\frac12}^2) \quad \forall X,Y\in \CI(\hM_\zeta;\hE^{\perp})\cup \{Z\}
\label{dtn.28}\end{equation}
près de $((\hH_2\cup\hH_3\cup\hH_4)\cap\hX_\zeta)/N$.  Or, puisque $\hV_1\in x_4^2v_{\frac12}^2\CI(\hM_{\zeta})$, on a que 
$$
     \frac{\hV_1\hxi_1}{1+\hV_1}\in \CI(\hM_{\zeta};{}^{\cT}T\hM_{\zeta})=v_{\frac12}\cV_{\nQFB}(\hM_\zeta).
$$
En utilisant \eqref{dtn.27}, on voit donc que \eqref{dtn.28} découle de la Remarque~\ref{tqfb.12b} et du Lemme~\ref{tqfb.12d}.

Finalement, pour voir que la courbure sectionnelle ne décroît pas partout vers zéro en $(\hH_1\cap\hX_\zeta)/N$ lorsque $\hH_1\ne \emptyset$, remarquons que par \eqref{dtn.17}, le modèle dans les fibres de $(\hH_1\cap\hX_\zeta)/N$ correspond à la déformation de Taub-NUT $g_{V_{\cI^c},\TN_\sigma}$ (spécifiée par $\sigma$) de la métrique euclidienne $g_{V_{\cI^c}}$ sur $V_{\cI^c}$ induite par $g_{\bbH^d}$.  En effet, comme l'action de $N$ est libre sur $\mu_N^{-1}(0)\setminus\{0\}$, elle sera libre sur la base de $\hH_1\cap\hX_{\zeta}$ spécifiée par l'équation \eqref{dtn.18b}, donc les fibres de $\hH_1\cap\hX_{\zeta}/N$ sont bien données par \eqref{dtn.17} sans avoir à prendre un quotient par l'action d'un sous-groupe de $N$.  Or, par le Théorème~\ref{dtn.25} et le Lemme~\ref{vg.1}, la croissance du volume de $g_{V_{\cI^c},TN_\sigma}$ n'est pas maximale.  Cette métrique ne peut donc pas être plate, car autrement elle coïnciderait avec une métrique euclidienne et aurait une croissance du volume maximale.  

Maintenant, par le modèle \eqref{tqfb.12c} d'une métrique tordue $\QAC$ exacte et en prenant compte la déformation de Taub-NUT, on a que près de $(\hH_1\cap\hX_{\zeta})/N$, la métrique $g_{\zeta,\TN_\sigma}$ est asymptotique à une métrique de la forme
\begin{equation}
\frac{du_1^2}{u_1^4}+ \frac{\widetilde{\pr}_1^*\widehat{\varphi}_1^*g_{\hB_1}}{u_1^2}+ \widetilde{\pr}_1\kappa,
\label{cb.1}\end{equation}
où $\widehat{\varphi}_1: (\hH_1\cap\hX_\zeta)/N\to \hB_1$ est le fibré induit par la structure de fibrés itérés de $\hM_\zeta$, $g_{\hB_1}$ est une métrique sur la base $\hB_1$ de ce fibré et  $\kappa$ est une famille de métriques induisant sur chaque fibre la métrique $g_{V_{\cI^c},\TN_\sigma}$.  Comme cette métrique n'est pas plate, la courbure sectionnelle du modèle \eqref{cb.1}, et donc celle de $g_{\zeta,\TN_\sigma}$, ne peut pas partout tendre vers zéro lorsque $u_i\searrow 0$, d'où le résultat.

\end{proof}

Le Théorème~\ref{dtn.25} nous permet aussi de déterminer le cône tangent à l'infini de la déformation de Taub-NUT $g_{\zeta,\TN_\sigma}$.  Pour ce faire, remarquons d'abord que l'homomorphisme de groupes 
$$
    \exp\circ \sigma: \bbR\to T_{\cI}\subset T^d
$$
est une immersion.  Soit $T_{\sigma}$ le sous-tore de $T_{\cI}$ correspondant à la fermeture de l'image de cette immersion, c'est-à-dire que 
$$
    T_{\sigma}:= \overline{\exp\circ\sigma(\bbR)}.
$$
En tant que sous-groupes de $T^d$, les sous-tores $T_{\sigma}$ et $N$ agissent par isométries sur le cône
$$
   \mu_N^{-1}(0)\cap\mu_{\sigma}^{-1}(0)\subset \bbH^d.
$$ 
L'action de $N$ sur $\mu_N^{-1}(0)\cap\mu_\sigma^{-1}(0)\setminus \{0\}$ est en fait libre et le quotient 
$$
   M_{0,\sigma}:= (\mu_N^{-1}(0)\cap \mu_\sigma^{-1}(0))/N
$$
est un sous-cône du cône hyperkählérien $(M_0,g_0)$, où $g_0$ est la métrique hyperkählérienne du quotient hyperkählérien $M_0=\mu^{-1}_N(0)/N$.  Puisque la différentielle de $\mu_\sigma$ s'annule sur $V_\cI$, remarquons que $M_{0,\sigma}$ est singulier en $(\mu_N^{-1}(0)\cap V_{\cI})/N$.  L'action de $T_\sigma$ sur $\mu_N^{-1}(0)\cap\mu^{-1}_\sigma(0)$ induit une action par isométries sur le quotient $M_{0,\sigma}$.  Cette action est effective si et seulement si $T_\sigma\cap N=\{\Id\}$.  En particulier, le quotient $M_{0,\sigma}/T_\sigma$ est aussi un cône métrique.  
\begin{corollaire}
La déformation de Taub-NUT $g_{\zeta,\TN_\sigma}$ a pour unique cône tangent à l'infini le cône métrique
$$
    (M_{0,\sigma}/T_{\sigma})\times \Im\bbH.
$$
\label{dtn.29}\end{corollaire}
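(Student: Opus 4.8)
The plan is to read off the tangent cone from the geometry near the maximal boundary hypersurface $\hH_4$ of the fibered-corner compactification $\hM_\zeta=\hX_\zeta/N$, exactly as the tangent cone of the undeformed metric was extracted at the maximal hypersurface in Corollaire~\ref{cone.2}. First I would reduce to the conical case $\zeta=0$: since $\mu_N$ and $\mu_\sigma$ are quadratic, the parameter $\zeta$ enters the defining equations \eqref{dtn.10} only through the factors $u^2$, so at $u=0$ the boundary equations \eqref{dtn.10b} are insensitive to $\zeta$ and the asymptotic model of $g_{\zeta,TN_\sigma}$ near $\hH_4$ coincides with that of the Taub-NUT deformation of the hyperkähler cone $M_0=\mu_N^{-1}(0)/N$. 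By Th\'eor\`eme~\ref{dtn.25} the metric is an \emph{exact} $(\cF/N,\gp)$-twisted quasi-foliated metric, so on a collar of $\hH_4$ it is, modulo a term of order $x_{\hH_4}$, asymptotic to a metric cone whose cross-section is governed by the boundary fibration; the polyhomogeneous error decays and does not affect the cone.

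Next I would identify that cross-section. On the sphere $\bbS(\bbH^d\times\Im\bbH)$ the equations \eqref{dtn.10b} read $\mu_\sigma(\omega_{\bbH^d})=0$ and $\mu_N(\omega_{\bbH^d})=0$ with $\omega_{\Im\bbH}$ free, so $\hH_4\cap\hX_\zeta$ fibers over the spherical join of the link of $\mu_N^{-1}(0)\cap\mu_\sigma^{-1}(0)$ with $\bbS(\Im\bbH)=S^2$. The foliation $\cF/N$ collapses in the blow-down limit, and as its leaves are the $\bbR$-orbits, whose closures are the $T_\sigma$-orbits for $T_\sigma=\overline{\exp\circ\sigma(\bbR)}$, this collapse quotients the link by $N$ and by $T_\sigma$. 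The decisive feature, forced by the weight $\gp(\hH_4)=\tfrac12$, is that the two families of directions rescale anisotropically: the image directions of the quadratic map $\mu_\sigma$ are dilated so that $\bbS(\Im\bbH)$ contributes the Euclidean factor $\Im\bbH$ (these appear at radius $\sim\sqrt{\rho}$ in the original coordinate), whereas the directions tangent to $\{\mu_\sigma=0\}$ are undilated and, after quotienting by $N$ and $T_\sigma$, contribute the metric cone $M_{0,\sigma}/T_\sigma=(\mu_N^{-1}(0)\cap\mu_\sigma^{-1}(0))/(N\cdot T_\sigma)$ (these appear at radius $\sim\rho$). Since the metric cone over a spherical join $A*B$ is the Riemannian product $C(A)\times C(B)$, this identifies the tangent cone with $(M_{0,\sigma}/T_\sigma)\times\Im\bbH$; the volume-growth count of Lemme~\ref{vg.1} provides a consistency check on the resulting dimension.

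Finally, uniqueness of the tangent cone would follow from exactness: the product model at $\hH_4$ holds modulo $x_{\hH_4}$, so the rescaled metrics converge and no Cheeger--Colding non-uniqueness arises. The main obstacle I anticipate lies in the second step: one must justify that the Gromov--Hausdorff collapse is along the full closure $T_\sigma$ rather than merely along the dense $\bbR$-orbits, and one must reconcile the two anisotropic scales -- the boosted $\Im\bbH$-directions and the unboosted reduced directions -- into a single metric product cone. A secondary difficulty is the behaviour along the fixed locus $V_\cI$, where $M_{0,\sigma}$ is singular and the hypersurfaces $\hH_1,\hH_3$ of Proposition~\ref{dtn.16} meet $\hH_4$; there one must verify that the conical models match across strata, so that the limit is precisely the asserted stratified metric cone.
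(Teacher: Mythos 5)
Your overall strategy coincides with the paper's: read off the model at the maximal hypersurface $\hH_4$ of $\hM_\zeta$, then argue that rescaling collapses the foliation $\cF/N$, producing the quotient by $T_\sigma$. Your identification of the cross-section via the spherical join and the fact that the cone over a join is a product of cones is a reasonable variant of the paper's derivation, which instead splits ${}^{\cT}T\hM_\zeta|_{\hB_4}$ as in \eqref{dtn.30}--\eqref{dtn.31} and writes the cone metric as \eqref{dtn.31b}. However, your first paragraph contains a claim that is false and, taken literally, contradicts the very statement you are proving: an exact $(\cF/N,\gp)$-tordue quasi-feuillet\'ee metric is \emph{not} asymptotic to a metric cone near $\hH_4$ modulo decaying terms. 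By Lemme~\ref{dtn.7}, the model there is $(d\rho^2+\rho^2 g_{\hE^{\perp}_{\hB_4}})+\eta_1^2$: conical only transverse to the foliation, with leaf directions of \emph{bounded} length. If the metric were genuinely asymptotically conical, its tangent cone would have dimension $\dim M_\zeta$ and no quotient by $T_\sigma$ would appear; it is precisely the failure of conicality along the leaves that drives the collapse.

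The second, and decisive, problem is that the step you flag as ``the main obstacle'' is the heart of the proof, and you do not carry it out. The paper resolves it as follows: rescaling by $\lambda^{-2}$ turns the model into $(d\rho^2+\rho^2 g_{\hE^{\perp}_{\hB_4}})+\eta_1^2/\lambda^2$, so moving along a leaf for parameter time $T$ costs $O(T/\lambda)$; given two points in the same leaf closure (i.e.\ the same $T_\sigma$-orbit) and any $\epsilon>0$, density of the $\bbR$-orbit provides points on a common leaf within $\epsilon$ of each of them and at finite parameter distance $T(\epsilon)$, so their rescaled distance is at most $2\epsilon+O(T(\epsilon)/\lambda)$, which tends to $2\epsilon$ and then to $0$; meanwhile transverse distances are governed by $g_{\hE^{\perp}_{\hB_4}}$, which the Taub-NUT deformation leaves untouched, so nothing else collapses. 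This is what lets the paper conclude that the pointed Gromov--Hausdorff limit of $(C\hB_4,(d\rho^2+\rho^2 g_{\hE^{\perp}_{\hB_4}})+\eta_1^2/\lambda^2,0)$ is the metric quotient $(M_{0,\sigma}\times\Im\bbH)/T_\sigma=(M_{0,\sigma}/T_\sigma)\times\Im\bbH$, and that the tangent cone is \emph{unique}. Without this argument, the pointwise form of the metric alone does not identify the limit distance function, and your proposal establishes nothing beyond the statement of the problem. A minor further slip: the quotient by $N$ is not a collapse effect --- it is already built into $\hM_\zeta=\hX_\zeta/N$; only $T_\sigma$ (more precisely $T_\sigma/(T_\sigma\cap N)$ acting on $M_{0,\sigma}$) arises from the collapse.
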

\begin{proof}
Rappelons qu'un cône tangent à l'infini d'une variété riemannienne complète $(M,g)$ est la limite Gromov-Hausdorff pointée d'une suite $(M, \frac{g}{\lambda_i^2},p)$ pour $p\in M$ un point fixé et $\{\lambda_i\}$ une suite strictement croissante de nombres réels strictement positifs telle que $\lambda_i\nearrow \infty$.  Dans le cas d'une métrique $\gp$-tordue $\QAC$ exacte ayant une unique hypersurface bordante maximale $H_{\max}$, on vérifie directement que le cône tangent à l'infini est unique et donné par le modèle de type produit \eqref{tqfb.12c} de cette métrique en $H_{\max}$.  Dans le cas de la métrique $\gp$-tordue $\QAC$ $g_{\cT}$ sur $\hM_{\zeta}$ induite par la métrique de la Proposition~\ref{dtn.16} par passage au quotient, on en déduit que son unique cône tangent à l'infini est le cône
$$
     M_{0,\sigma}\times \Im\bbH
$$ 
avec métrique 
\begin{equation}
 g_\cC= d\rho^2+\rho^2 g_{\hB_4}
\label{dtn.29b}\end{equation}
sous l'identification $M_{0,\sigma}\times \Im\bbH= C\cB_4$, où $\hB_4=(\hH_4\cap \hX_{\zeta})/N$ est l'hypersurface bordante maximale de $\hM_\zeta$, $\cB_4$ est la variété stratifiée associée à $\hB_4$, $g_{\hB_4}$ est une métrique wedge exacte sur la variété à coins fibrés $\hB_4$ et 
$$
   C\cB_4:= ([0,\infty)\times \cB_4)/(\{0\}\times \cB_4)
$$
est le cône sur $\cB_4$.

Sur $\hB_4$, l'action de $\bbR$ (induite par $\sigma$) est localement libre et les orbites de cette action correspondent aux feuilles de la restriction du $(\hphi,\gp)$-feuilletage $\cF/N$ à $\hB_4$.  Maintenant, la décomposition \eqref{dtn.27b} donne lieu aussi à une décomposition orthogonale par rapport à la métrique $g_{\cT}$, à savoir la décomposition
$$
    {}^{\cT}T\hM_\zeta= \hE^\perp\oplus (x_4v_{\frac12})T(\cF/N).
$$
En restreignant à $\hB_4$, on obtient une décomposition plus fine
\begin{equation}
  {}^{\cT}T\hM_\zeta|_{\hB_4}= \langle \frac{\pa}{\pa \rho} \rangle\oplus \hE^\perp_{\hB_4}\oplus (x_4v_{\frac12})T(\cF/N),
\label{dtn.30}\end{equation}
où $\hE^\perp_{\hB_4}$ est le sous-fibré de $\hE^\perp|_{\hB_4}$ induisant la décomposition orthogonale
$$
   \hE^\perp|_{\hB_4}= \langle\frac{\pa}{\pa \rho}\rangle\oplus \hE^\perp_{\hB_4}.
$$
On a aussi une décomposition correspondante du fibré wedge sur $\hB_4$ 
\begin{equation}
 {}^{w}T\hB_4= \rho\hE^\perp_{\hB_4}\oplus \rho(x_4v_{\frac12})T(\cF/N)= \rho\hE^\perp_{\hB_4}\oplus (x_1x_2x_3)^{-1}T(\cF/N).
\label{dtn.31}\end{equation}
En termes de cette décomposition, la métrique \eqref{dtn.29b} prend la forme
\begin{equation}
  g_{\cC}= d\rho^2+\rho^2(g_{\hE^{\perp}_{\hB_4}}+ g_{\cF/N}),
\label{dtn.31b}\end{equation}
où $g_{\hE^{\perp}_{\hB_4}}$ et $g_{\cF/N}$ sont des métriques wedges sur $\rho\hE^{\perp}_{\hB_4}$ et $(x_1x_2x_3)^{-1}T(\cF/N)$.

Par le Lemme~\ref{dtn.7}, le modèle correspondant pour $g_{\zeta,\TN_\sigma}$ est 
$$
  g_{\cC,\TN_\sigma}= d\rho^2+ \rho^2(g_{\hE^{\perp}_{\hB_4}}+ V_1g_{\cF/N})= (d\rho^2+\rho^2g_{\hE^{\perp}_{\hB_4}})+ \eta_1^2
$$
avec $V_1\in (x_4v_{\frac12})^2\CI(\hM_\zeta)$.

Puisque $(M_{\zeta},g_\cT)$ a pour unique cône tangent à l'infini la métrique \eqref{dtn.29b}, on en déduit que lorsque $\lambda\to \infty$, $(M_\zeta, \frac{g_{\zeta,\TN_\sigma}}{\lambda^2},p)$ devient de plus en plus proche de 
$$
   (C\hB_4, (d\rho^2+\rho^2g_{E^\perp_{\hB_4}})+ \frac{\eta_1^2}{\lambda^2},0)
$$
dans la topologie de Gromov-Hausdorff pointée, de sorte que le cône tangent à l'infini de $(M_\zeta, g_{\zeta,\TN_\sigma})$ est unique et coïncide avec la limite
$$
  \lim_{\lambda\to \infty} (C\hB_4, (d\rho^2+\rho^2g_{E^\perp_{\hB_4})}+ \frac{\eta_1^2}{\lambda^2},0)
$$
dans la topologie de Gromov-Hausdorff pointée.  Or, pour cette famille de métriques, la distance entre deux points dans la fermeture d'une feuille du feuillage $\cF/N|_{\hB_4}$  relevé à $[0,\infty)\times \hB_4$ tend vers zéro lorsque $\lambda\to \infty$, si bien qu'à la limite $\lambda\to \infty$, on a plus généralement que la distance entre deux points $p$ et $q$ de $C\hB_4$ tend vers la distance entre leurs images sur le quotient $(M_{0,\sigma}\times \Im\bbH)/T_\sigma= (M_{0,\sigma}/T_\sigma)\times \Im\bbH$ avec la métrique induite $g_{M_{0,\sigma}/T_\sigma}+g_{\Im\bbH}$.  On en déduit donc que 
$$
 \lim_{\lambda\to \infty} (C\hB_4, (d\rho^2+\rho^2g_{E^\perp_{\hB_4})}+ \frac{\eta_1^2}{\lambda^2},0)= ((M_{0,\sigma}/T_\sigma)\times \Im\bbH, g_{M_{0,\sigma}/T_\sigma}+g_{\Im\bbH},0), 
$$
d'où le résultat.

\end{proof}

Prenons le temps de bien décrire le cône $(M_{0,\sigma}/T_\sigma)\times \Im\bbH$ apparaissant dans le Corollaire~\ref{dtn.29}.  D'abord, remarquons que les orbites de l'action de $T_\sigma$ sur $\hB_4$ correspondent à la fermeture des feuilles du feuilletage $\cF/N|_{\hB_4}$.  En particulier, sur chaque hypersurface bordante de $\hB_4$, les orbites de cette action sont tangentes aux fibres du fibré provenant de la structure de fibrés itérés.  

Or, par \cite{AM2011}, on sait qu'il existe une suite de strates de $\hB_4$ (associées aux différents stabilisateurs de l'action de $T_\sigma$ sur $\hB_4$) qui lorsqu'éclatées donnent une variété à coins $\hW_4$ munie d'une application de contraction
$$
   \beta_4: \hW_4\to \hB_4
$$ 
telle que l'action de $T_{\sigma}$ sur $\hB_4$ se relève en une action libre de $T_\sigma/(T_\sigma\cap N)$ sur $\hW_4$.  Sur chaque hypersurface bordante $\hH\in \cM_1(\hB_4)$, les strates à éclater sont transverses aux fibres du fibré $\hphi_{\hH}: \hH\to \widetilde{S}_{\hH}$ (de la structure de fibrés itérés de $\hB_4$) et sont envoyées surjectivement sur $\hS_{\hH}$ par $\hphi_{\hH}$.   

Par la construction de \cite{AM2011}, cela implique que $\hW_4$ est naturellement une variété à coins fibrés avec une hypersurface bordante $\hH\in\cM_1(\hB_4)$ se relevant en une hypersurface bordante $\hH'$ de $\hW_4$ ayant la même base $\hS_{\hH}$.  Par rapport à l'ordre partiel sur $\cM_1(\hW_4)$, une telle hypersurface bordante est toujours inférieure aux hypersurfaces bordantes la coupant et provenant d'une des $p$-sous-variétés éclatées pour obtenir $\hW_4$.  Les  fibrés de la structure de fibrés itérés de $\hW_4$ étant naturellement $T_\sigma$-équivariants par rapport à l'action de $T_\sigma$ sur $\hW_4$ et sur les bases des fibrés itérés, le quotient $\hW_4/T_\sigma$, qui est par \cite{AM2011} une variété à coins, est aussi automatiquement une variété à coins fibrés.

La métrique wedge $g_{\hB_4}$ sur $\hB_4$ se relève en une métrique wedge $g_{\hW_4}$ sur $\hW_4$.  Comme $T_\sigma$ agit par isométries par rapport à la métrique $g_{\hW_4}$ et comme les orbites de cette action sont tangentes aux fibres des fibrés de la structure de fibrés itérés de $\hW_4$, la métrique $g_{\hW_4}$ induit par passage au quotient une métrique wedge $g_{\hW_4/T_\sigma}$ sur la variété à coins fibrés $\hW_4/T_{\sigma}$.  Si $\widehat{\cW}_4$ et $\widehat{\cW}_4/T_\sigma$ dénotent les espaces stratifiés associés à $\hW_4$ et $\hW_4/T_\sigma$, alors 
\begin{equation}
   M_{0,\sigma}\times \Im\bbH= C(\widehat{\cW}_4/T_\sigma) 
\label{dtn.32}\end{equation}
et la métrique sur $M_{0,\sigma}/T_\sigma\times \Im\bbH$ est 
\begin{equation}
  d\rho^2+\rho^2g_{\hW_4/T_\sigma}.
\label{dtn.33}\end{equation}
En particulier, on en déduit qu'en tant qu'espace stratifié, 
\begin{equation}
\dim (M_{0,\sigma}/T_\sigma\times \Im\bbH)= 4(d-\dim N)-\dim T_\sigma+ \dim T_\sigma\cap N.
\label{dtn.34}\end{equation}

Illustrons les résultats de cette section par quelques exemples.
\begin{exemple}\emph{(Métrique de Taubian-Calabi)} Lorsque $N=\{\Id\}$, $\zeta=0$ et 
$$
   \begin{array}{llcl}
    \sigma: & \bbR &\to & \gt^d=\bbR^d \\
     & t & \mapsto & (t,\ldots,t)
   \end{array}
$$
est l'inclusion diagonale, la métrique $g_{\zeta,\TN_\sigma}$ correspond à la métrique de Taubian-Calabi.  Dans ce cas, $V_\cI=\{0\}$ et $T_\sigma=\exp\circ \sigma(\bbR)$ est le cercle diagonal dans $T^d$.  Le Théorème~\ref{dtn.25} et ses corollaires permettent alors de retrouver les résultats de \cite{Min2025}, à savoir que la courbure sectionnelle est $\cO(\rho^{-1})$ (en fait améliore cette décroissance à l'infini à $\cO(x_4\rho^{-1})$ par  le Corollaire~\ref{dtn.26} ), que le cône tangent à l'infini est $\mu^{-1}_\sigma(0)/T_\sigma\times \Im \bbH$ et que la croissance du volume est d'ordre $4d-1$ par le Lemme~\ref{vg.1}.  Ils permettent aussi d'obtenir une description plus fine du comportement asymptotique de la métrique à l'infini et de montrer que la variété riemannienne correspondante est à géométrie bornée.  Plus généralement, comme dans \cite[Exemple 7.4]{Min2025} on obtient le même genre de résultats en prenant 
$$
   \begin{array}{llcl}
    \sigma: & \bbR &\to & \gt^d=\bbR^d \\
     & t & \mapsto & (a_1t,\ldots,a_dt)
   \end{array}
$$
avec $a_1,\ldots, a_d$ des entiers non nuls. 
\label{dtn.35}\end{exemple}

\begin{exemple}
Toujours dans le cas où $N=\{\Id\}$ et $\zeta=0$, on peut aussi se mettre dans le cas générique où l'inclusion $\sigma: \bbR\hookrightarrow \gt^d$ est telle que 
$$
    T_\sigma= \overline{\exp\circ \sigma(\bbR)}= T^d.  
$$
Dans ce cas, $V_\cI=\{0\}$.  À nouveau, le Théorème~\ref{dtn.25}  et ses corollaires montrent que la métrique $g_{\zeta,\TN_\sigma}$ a une  courbure sectionnelle qui est $\cO(x_4\rho^{-1})$ et que sa croissance du volume est d'ordre $4d-1$.  Contrairement à l'exemple précédent toutefois, lorsque $d>1$, le cône tangent à l'infini, $\mu^{-1}_\sigma(0)/T^d\times \Im\bbH$, est de dimension $3d$, donc strictement plus petite  que l'ordre $4d-1$ de la croissance du volume.  
\label{dtn.36}\end{exemple}
En faisant varier $\sigma$ dans cet exemple, on peut obtenir plusieurs métriques hyperkählériennes distinctes comme le montre le résultat suivant.
\begin{lemme}
Pour $d\ge 2$, les différents choix possibles de $\sigma$ dans l'Exemple~\ref{dtn.36} donnent lieu à une infinité non dénombrable de métriques hyperkählériennes mutuellement non isométriques sur $\bbC^{2d}$, même à une homothétie près.
\label{dtn.36b}\end{lemme}
\begin{proof}
Avec le modèle asymptotique \eqref{dtn.31b},  on voit que le $(\hphi,\gp)$-feuilletage $\cF$ restreint à $\hH_4\cap \hX_\zeta$ est géométriquement déterminé par $g_{\zeta,\TN_\sigma}$.  Si $d=2$, en regardant la fermeture des feuilles de ce feuilletage pour des feuilles ne coupant par $\widehat{V_{\cI}\times \Im\bbH}$ pour $\cI\subset \{1,\ldots,d\}$, on voit qu'à ce feuilletage est associé canoniquement  un feuilletage de Kronecker $K_{\sigma}$ sur $T^2$ de pente 
$$
     \alpha=  \frac{a_2}{a_1} \quad \mbox{si} \quad \sigma(t)= (a_1t,a_2t).
$$ 
Or, par \cite{Connes1981,Rieffel1981}, voir aussi \cite{Donato-Iglesias} pour une preuve élémentaire,  des feuilletages de Kronecker de pentes $\alpha$ et $\beta$ seront difféomorphes si et seulement si 
$$
      \beta= \frac{a\alpha+b}{c\alpha+d} \quad \mbox{pour un certain} \quad \lrp{\begin{array}{cc} a& b \\ c& d  \end{array}} \in \GL(2,\bbZ).
$$
On a donc une infinité non-dénombrable de choix de $\sigma$ donnant lieu à des métriques hyperkählériennes mutuellement non isométriques, même à une homothétie près.  

Si $d>2$, alors pour chaque sous-ensemble $\cI\subset \{1,\ldots,d\}$ de cardinalité $d-2$, $T_{\cI}$ est un tore de dimension $2$ et les fermetures  des feuilles du $(\hphi,\gp)$-feuilletage sur $(\widehat{V_{\cI}\times \Im\bbH})\cap \hH_4\cap \hX_{\zeta}$ engendrent un feuilletage de Kronecker $K_{\sigma,\cI}$ de pente
$$
     \alpha= \frac{a_i}{a_j}
$$ 
si $\sigma(t)=(a_1t,\ldots,a_dt)$ et $\cI=\{1,\ldots,d\}\setminus \{i,j\}$.  En faisant varier $\cI$, on peut donc utiliser ces feuilletages de Kronecker pour déduire à nouveau qu'il y a une infinité non dénombrable de choix de $\sigma$ donnant lieu à des métriques hyperkählériennes mutuellement non isométriques, même à une homothétie près. 

\end{proof}

Bien sûr, lorsque $d=1$, les Exemples~\ref{dtn.35} et \ref{dtn.36}  coïncident et correspondent à la métrique de Taub-NUT sur $\bbC^2$.  Les exemples précédents sont des métriques hyperkählériennes sur $\bbC^{2d}$ correspondant à des déformations de Taub-NUT d'ordre $1$ de $g_{\bbH^d}$.  On peut aussi considérer des déformations de Taub-NUT de la métrique de Calabi.
 \begin{exemple}
 Prenons $N$ le cercle diagonal dans $T^d$ et $\zeta\in (\gn^*\otimes \bbR^3)\setminus \{0\}$, de sorte que $g_{\zeta}$ est la métrique de Calabi sur $T^*\bbC\bbP^{d-1}$ comme décrite dans l'Exemple~\ref{con.5}.  En prenant une inclusion $\sigma:\bbR\hookrightarrow \gt^d$ transverse à $\gn$, on obtient une déformation de Taub-NUT d'ordre $1$ $g_{\zeta,\TN_\sigma}$ pour laquelle le Théorème~\ref{dtn.25} et ses corollaires s'appliquent.
 \label{dtn.38}\end{exemple}
 Regardons quelques cas plus en détail.
 \begin{exemple}
 Si on prend $\sigma$ tel que $T_\sigma=T^d$ dans l'Exemple~\ref{dtn.38}, alors la déformation de Taub-NUT a une courbure sectionnelle $\cO(x_4\rho^{-1})$ et son cône tangent à l'infini $M_{0,\zeta}\times \Im\bbH$ est de dimension $3(d-1)$ strictement inférieure à l'ordre $4(d-1)-1$ de la croissance de son volume lorsque $d>2$.  
 \label{dtn.40}\end{exemple}
 
 On peut aussi obtenir des déformations de Taub-NUT de la métrique de Calabi pour lesquelles la courbure sectionnelle ne décroît pas vers zéro à l'infini dans certaines directions.
 
 \begin{exemple}
Dans l'Exemple~\ref{dtn.38}, on peut choisir une inclusion $\sigma: \bbR\hookrightarrow \gt^d$ telle que
$$
    T_\sigma= \overline{\exp\circ \sigma(\bbR)}= T_\cI  
$$
pour un sous-ensemble $\cI\subset\{1,\ldots,d\}$ non vide et contenant strictement moins de $d-1$ éléments, c'est-à-dire tel que $1<\dim T_\cI<d$. En particulier, on a que $T_\sigma\cap N=\{\Id\}$.  Dans ce cas, $V_\cI\ne \{0\}$, donc par le Corollaire~\ref{dtn.26} la courbure sectionnelle de $g_{\zeta,\TN_\sigma}$ est plutôt seulement $\cO(x_4^2\rho^{-2\gp})=\cO(x_4^2v_{\frac12}^2)$ et ne décroit pas vers zéro dans certaines directions à l'infini.  Le cône tangent à l'infini, à savoir $(M_{0,\sigma}/T_\sigma) \times \Im\bbH$, est de dimension $4(d-1) -\dim T_\sigma=3d-4+|\cI |$, ce qui est strictement inférieur à l'ordre $4(d-1)-1$ de la croissance du volume de la métrique.   
\label{dtn.37}\end{exemple}

\begin{exemple}
Toujours dans le cadre de l'Exemple~\ref{dtn.38}, prenons plutôt une inclusion $\sigma$ de la forme $\sigma(t)=(a_1t,\ldots,a_dt)$ avec $a_i$ des entiers dont certains sont nuls.  Posons 
$$
   \cI= \{i\in\{1,\ldots,d\}\;|\; a_i=0\},
$$
de sorte que $\cI$ est le plus grand sous-ensemble de $\{1,\ldots,d\}$ tel que $\sigma(\bbR)\subset \gt_\cI$.  Dans ce cas $T_\sigma= \overline{\exp\circ\sigma(\bbR)}= \exp\circ\sigma(\bbR)$ est un cercle contenu dans $T_{\cI}$.  On  déduit du Théorème~\ref{dtn.25} et de ses corollaires que  la déformation de Taub-NUT $g_{\zeta,\TN_\sigma}$ a une courbure sectionnelle 
$\cO(x_4^2\rho^{-2\gp})=\cO(x_4^2v_{\frac12}^2)$ qui ne décroît pas dans toutes les directions à l'infini, une croissance du volume d'ordre $4(d-1)-1$ et un cône tangent à l'infini qui est de dimension $4(d-1)-1$.  

\label{dtn.37b}\end{exemple}

\begin{remarque}
On peut aussi prendre les inclusions $\sigma$ des Exemples~\ref{dtn.37} et \ref{dtn.37b} lorsque $N=\{\Id\}$ et $\zeta=0$, mais on obtient dans ce cas un produit cartésien de $\bbH^{|\cI|}$ avec une métrique des Exemples~\ref{dtn.36} et \ref{dtn.35} respectivement.
\label{dtn.39}\end{remarque}

\section{Déformations de Taub-NUT d'ordre maximal de la métrique euclidienne} \label{QFB.0}

Dans cette dernière section de l'article, nous allons étudier la géométrie à l'infini des déformations de Taub-NUT d'ordre maximal de la métrique euclidienne $g_{\bbH^d}$ de $\bbH^d$.  Plus précisément, nous allons établir que de telles déformations sont quasi-fibrées au bord au sens de \cite{CDR}.

Soit donc $\sigma: \bbR^d\to \mathfrak{t}^d$ une application linéaire bijective.  Par l'entremise de l'action canonique du tore diagonal $T^d$ sur $\bbH^d$, celle-ci induit une action hyperhamiltonienne de $\bbR^d$ sur $\bbH^d$ avec application moment hyperhamiltonienne
$$
   \mu_{\sigma}:= (\Id\otimes \sigma^*)\circ \mu,
$$
où $\mu: \bbH^d\to \bbR^3\otimes (\mathfrak{t}^d)^*$ dénote l'application moment hyperhamiltonienne de l'action du tore diagonal $T^d$ sur $\bbH^d$.  Si $\lambda: \bbH^d\to \bbR^3\otimes (\bbR^d)^*$ dénote l'application moment hyperhamiltonienne \eqref{dtn.2} avec $m=d$, alors la déformation de Taub-NUT d'ordre $d$ associée est alors donnée par 
$$
   (\mu_\sigma+\lambda)^{-1}(0)/\bbR^d,
$$ 
où $\mu_\sigma+\lambda: \bbH^d\times \bbH^d\to \bbR^3\otimes (\bbR^d)^*$ est l'application moment hyperhamiltonienne de l'action de $\bbR^d$ sur $\bbH^d\times \bbH^d$ spécifiée par $\mu_\sigma$ sur le premier facteur et par $\lambda$ sur le deuxième facteur.  Par \eqref{dtn.3}, l'inclusion 
$$
   X_{\bbH^d}:= (\mu_\sigma+\lambda)^{-1}(0)\cap (\bbH^d\times (\Im\bbH)^d)\subset (\mu_\sigma+\lambda)^{-1}(0)
$$
induit un difféomorphisme
$$
   X_{\bbH^d}\cong (\mu_\sigma+\lambda)^{-1}(0)/\bbR^d.
$$
Rappelons aussi que la sous-variété $X_{\bbH^d}\subset \bbH^d\times (\Im \bbH)^d$ correspond au graphe de $\mu_{\sigma}:\bbH^d\to \bbR^3\otimes (\bbR^d)^*\cong (\Im\bbH)^d$. 

Pour vérifier que l'Hypothèse~\ref{dtn.5} est satisfaite, il suffit d'utiliser l'application $\sigma$.  Plus précisément, via l'identification canonique $\mathfrak{t}^d=\bbR^d$, celle-ci peut être vue comme une application $\sigma:\bbR^d\to \bbR^d$.  Il suffit alors de prendre la base $\{\widetilde{e}_1,\ldots,\widetilde{e}_d\}$ de $\bbR^d$ donnée par 
\begin{equation}
   \widetilde{e}_i:= \sigma^{-1}(e_i),
\label{base.1}\end{equation}
où $\{e_1,\ldots, e_d\}$ est la base canonique de $\bbR^d$.  En effet, dans ce cas, le champ vectoriel $\widetilde{\xi}_i\in\CI(\bbH^d;T\bbH^d)$ correspondant à l'action infinitésimale de $\widetilde{e}_i$ spécifiée par $\mu_\sigma$ correspond aussi à l'action infinitésimale de $e_i\in \bbR^d\cong \mathfrak{t}^d$ pour l'action du tore diagonale $T^d$ spécifiée par $\mu$.  Cela signifie que $\widetilde{\xi}_i$ est tangent aux fibres de la projection
\begin{equation}
\begin{array}{lccl}
 \widehat{\pr}_i: & \bbH^d &\to & \bbH^{d-1}  \\
         & (p_1,\ldots,p_d) &\mapsto & (p_1,\ldots,p_{i-1},p_{i+1},\ldots,p_d)
\end{array}
\label{QFB.1}\end{equation}
omettant le $i$ ième facteur $\bbH$ de $\bbH^d$.  Les champs vectoriels $\widetilde{\xi}_1,\ldots,\widetilde{\xi}_d$ sont donc clairement mutuellement orthogonaux, les fibres des différentes projections \eqref{QFB.1} étant mutuellement orthogonales.

Ce choix de base suggère de réécrire les équations définissant $X_{\bbH^d}$, à savoir
$$
  \mu_\sigma(m)=q \quad \mbox{pour} \; (m,q)\in \bbH^d\times (\Im\bbH)^d,
$$
par
\begin{equation}
 \mu(m)= (\Id\otimes (\sigma^{-1})^*)(q) \quad \mbox{pour} \; (m,q)\in \bbH^d\times (\Im\bbH)^d.
\label{QFB.2}\end{equation}
Autrement dit, $(\Id\otimes (\sigma^{-1})^*): (\Im\bbH^d)\to (\Im\bbH)^d$ spécifie un changement de coordonnées linéaire sur $(\Im\bbH)^d$, à savoir
$$
   q_{\sigma}:= (\Id\otimes (\sigma^{-1})^*)(q).
$$
Dans les coordonnées $q_\sigma= (q_{\sigma,1},\ldots,q_{\sigma,d})\in (\Im\bbH)^d$, la sous-variété $X_{\bbH^d}$ est décrite par les équations 
$$
    \mu_{(T^d)_{\{i\}^c}}(m)= q_{\sigma,i}
$$
dans la notation de \eqref{qac.15}, où $ \mu_{(T^d)_{\{i\}^c}}: \bbH^d\to \Im\bbH$ correspond à l'application moment hyperhamiltonienne du $i$ ième cercle de $T^d=(\bbS^1)^d$.  En particulier, dans les coordonnées canoniques $p=(p_1,\ldots,p_d)\in\bbH^d$ avec $p_i\in\bbH$, on a que
$$
      \mu_{(T^d)_{\{i\}^c}}(p)= \mu_{\bbH}(p_i),
$$ 
où $\mu_{\bbH}:\bbH\to \Im \bbH$ dénote l'application moment hyperhamiltonienne de l'action du cercle $\bbS^1\subset \Sp(1)$ sur $\bbH$, c'est-à-dire le cas $d=1$ de l'action \eqref{action.1}.  Dans les coordonnées $(p,q_\sigma)$, les équations définissant $X_{\bbH^d}$ prennent donc la forme
\begin{equation}
\mu_{\bbH}(p_i)=q_{\sigma,i} \quad \mbox{pour} \; i\in\{1,\ldots,d\}.
\label{QFB.3}\end{equation}
Le prix à payer pour cette description simple de $X_{\bbH^d}$ est que la métrique euclidienne sur le facteur $(\Im\bbH^d)$ aura des termes croisés lorsque décrite dans les coordonnées $q_{\sigma}$.  Plus précisément, si $q_{\sigma,i}=(q^1_{\sigma,i},q^2_{\sigma,i},q^3_{\sigma,i})\in \bbR^3\cong\Im\bbH$, alors la métrique euclidienne prendra la forme
$$
   \sum_{k=1}^3 \sum_{i,j=1}^d g_{ij} dq^k_{\sigma,i}\otimes dq^k_{\sigma,j}
$$
avec les $g_{ij}$ correspondant aux coefficients d'une matrice symétrique $d\times d$ définie positive qui n'est pas forcément diagonale.  

Soit $\overline{\bbH^d\times (\Im\bbH)^d}$ la compactification radiale de $\bbH^d\times (\Im\bbH)^d$.  Si $r$ dénote la fonction de distance par rapport à l'origine sur $\bbH^d\times (\Im\bbH)^d$ en termes de la métrique euclidienne, alors près du bord $\pa\overline{\bbH^d\times (\Im\bbH)^d}$, on peut utiliser les coordonnées sphériques 
$$
      u=\frac{1}{r}, \quad \omega_i= \frac{p_i}{r} \quad \mbox{et} \quad \varpi_i=\frac{q_{\sigma,i}}{r}
$$
dans lesquelles les équations définissant $X_{\bbH^d}$ prennent la forme
\begin{equation}
  \mu_{\bbH}(\omega_i)=u\varpi_i \quad \mbox{pour} \quad i\in\{1,\ldots,d\}.
\label{QFB.4}\end{equation}
Soit $\overline{X}_{\bbH^d}$ la fermeture de $X_{\bbH^d}$ dans $\overline{\bbH^d\times (\Im\bbH)^d}$.  Alors les équations définissant $\pa \overline{X}_{\bbH^d}:= \overline{X}_{\bbH^d}\cap \pa \overline{\bbH^d\times (\Im\bbH)^d}$ sont obtenues en posant $u=0$ dans \eqref{QFB.4}, ce qui donne
\begin{equation}
\mu_{\bbH}(\omega_i)=0 \quad \mbox{pour} \quad i\in\{1,\ldots,d\}.
\label{QFB.5}\end{equation}
Comme $\mu_{\bbH}$ ne s'annule qu'à l'origine (le seul point fixe de l'action du cercle sur $\bbH$), les équations \eqref{QFB.5} correspondent simplement à 
\begin{equation}
 \omega_i=0  \quad \mbox{pour} \quad \quad i\in\{1,\ldots,d\}.
\label{QFB.6}\end{equation} 
En particulier, $\pa \overline{X}_{\bbH^d}= \pa \overline{(\Im\bbH)^d}$, où $\overline{(\Im \bbH)^d}$ est la fermeture de $\{0\}\times (\Im\bbH)^d$ dans $\overline{\bbH^d\times (\Im\bbH)^d}$ et $\pa \overline{(\Im \bbH)^d}$ est le bord de $\overline{(\Im \bbH)^d}$.

Comme la différentielle de $\mu_{\bbH}$ s'annule à l'origine, cela montre que $\overline{X}_{\bbH^d}$ est singulier sur $\pa\overline{\bbH^d\times (\Im\bbH)^d}$.  Pour résoudre ces singularités, nous allons effectuer des éclatements sur le bord de $\bbH^d\times (\Im\bbH)^d$.  Pour $\cI\subset \{1,\ldots,d\}$ un sous-ensemble non-vide, considérons le sous-espace
$$
 V_{\cI}= \{ (q_{\sigma,1},\ldots,q_{\sigma,d})\in (\Im\bbH)^d \; | \; q_{\sigma,i}=0 \; \mbox{pour} \; i\notin \cI\}\subset (\Im\bbH)^d
$$
 vu comme un sous espace de $\bbH^d\times (\Im\bbH)^d$ via l'identification canonique $\{0\}\times (\Im\bbH)^d=(\Im\bbH)^d$.  Dénotons par $\overline{V}_{\cI}$ sa fermeture dans $\overline{\bbH^d\times (\Im\bbH)^d}$.  Soit $\overline{s}_{\cI}:=\pa \overline{V}_{\cI}$ le bord de $\overline{V}_{\cI}$.  La relation d'inclusion induit un ordre partiel sur les sous-ensembles $\overline{s}_{\cI}$ en convenant que 
\begin{equation}
   \overline{s}_{\cI}\le \overline{s}_{\cJ} \; \Longleftrightarrow \; \overline{s}_{\cI}\subset \overline{s}_{\cJ} \; \Longleftrightarrow \; \cI\subset \cJ.
\label{QFB.7}\end{equation} 

La résolution de $\overline{\bbH^d\times (\Im\bbH)^d}$ dont on aura besoin consistera précisément à éclater ces sous-variétés dans une ordre compatible avec l'ordre partiel \eqref{QFB.7}.  Cependant, ces éclatements ne seront pas conventionnels.  Ils seront pondérés au sens de \cite[\S~4]{CR}.  En tant que sous-variété de $\overline{\bbH^d\times (\Im\bbH)^d}$, $\overline{s}_{\cI}$ est définie par les équations
\begin{equation}
u=0, \quad \omega_1=\cdots=\omega_d=0 \quad \mbox{et} \quad \varpi_i=0 \quad \mbox{pour} \; i\in\cI^c.
\label{QFB.8}\end{equation}
Pour définir l'éclatement pondéré de $\overline{s}_{\cI}$, il faut assigner un poids à chacune des variables apparaissant dans \eqref{QFB.8}.  Le multi-poids qu'on prendra sera celui qui pour $i\in\cI^c$ assigne le poids $1$ à $u$ et aux variables $\omega_i$ et $\varpi_i$ et autrement assigne le poids $\frac12$ aux variables $\omega_i$ pour $i\in\cI$.  

En termes de ces choix de multi-poids, la variété éclatée qu'il faut considérer est la variété à coins
\begin{equation}
 \widehat{\bbH^d\times (\Im\bbH)^d}:= [\overline{\bbH^d\times (\Im\bbH)^d}; (\overline{s}_\cI,w_{\cI}) \; \mbox{pour} \; \emptyset\ne\cI\subset \{1,\ldots,d\}]
\label{QFB.9}\end{equation}
obtenue en éclatant les sous-variétés $\overline{s}_{\cI}$ de façon pondérée par rapport au multi-poids $w_{\cI}$ dans un ordre compatible avec l'ordre partiel \eqref{QFB.7}.  Par rapport à \cite[\S~4]{CR}, une différence importante est que les poids assignés aux variables varient d'un éclatement à un autre.  Par exemple, si $\cI\subset \cJ$ est une inclusion stricte, alors pour $i\in \cJ\setminus \cI$, le poids de $\omega_i$ est de 1 pour l'éclatement de $\overline{s}_{\cI}$, mais devient $\frac12$ pour l'éclatement de $\overline{s}_\cJ$.  Comme dans \cite[\S~4]{CR}, il faut vérifier que la décomposition \eqref{QFB.8} induisant une trivialisation du fibré normal se relève au relèvement de $\overline{s}_{\cI}$ suite aux éclatements de $\overline{s}_{\cJ}$ pour $\overline{s}_\cJ<\overline{s}_\cI$, ce qui est garanti par \cite[Lemma~4.5]{CR}.  La suite d'éclatements pondérés de \eqref{QFB.9} est donc bien définie.

De plus, la définition de la variété $\widehat{\bbH^d\times (\Im\bbH)^d}$ ne dépend pas du choix de l'ordre des éclatements pourvu que celui-ci soit compatible avec l'ordre partiel \eqref{QFB.7}.  En effet, si $\overline{s}_{\cI}$ et $\overline{s}_\cJ$ sont deux sous-variétés qui ne sont pas reliées par la relation d'ordre, alors soit $\overline{s}_{\cI}\cap \overline{s}_{\cJ}=\emptyset$, soit  $\overline{s}_{\cI}\cap \overline{s}_{\cJ}= \overline{s}_{\cK}$ avec $\cK:= \cI\cap \cJ$ un sous-ensemble strictement contenu dans $\cI$ et $\cJ$.  Les éclatement pondérés de $\overline{s}_{\cI}$ et $\overline{s}_{\cJ}$ commutent donc, dans le premier cas parce que $\overline{s}_{\cI}$ et $\overline{s}_{\cJ}$ sont disjoints, dans le deuxième cas parce que leurs relèvements le deviennent suite à l'éclatement pondéré de $\overline{s}_{\cK}$.   
 
Loin de $\overline{s}_\cJ$ pour $\overline{s}_\cJ<\overline{s}_{\cI}$, remarquons que l'éclatement pondéré de $\overline{s}_{\cI}$ correspond à remplacer les coordonnées apparaissant dans \eqref{QFB.8} par
\begin{equation}
u, \quad p_i= \frac{\omega_i}{u}, \quad q_{\sigma,i}=\frac{\varpi_i}{u} \quad \mbox{pour} \; i\in\cI^c\quad \mbox{et} \quad \widehat{\omega}_i=\frac{\omega_i}{u^{\frac12}}=u^{\frac12}p_i, \quad \varpi_i \quad\mbox{pour} \; i\in\cI. 
\label{QFB.10}\end{equation}
Dans ces coordonnées, les équations \eqref{QFB.4} définissant $X_{\bbH^d}$ prennent la forme 
\begin{gather}
\label{QFB.11a} \mu_{\bbH}(p_i)=q_{\sigma,i}, \quad \mbox{pour} \quad i\in \cI^c, \\
\label{QFB.11b} \mu_{\bbH}(\widehat{\omega}_i)=\varpi_i, \quad \mbox{pour} \quad i\in \cI.
\end{gather}

Soit $\widehat{H}_\cI$ l'hypersurface bordante de $\widehat{\bbH^d\times (\Im\bbH)^d}$ engendrée par l'éclatement pondéré de $\overline{s}_{\cI}$.  Dénotons aussi par $\widehat{H}_{\max}$ l'hypersurface bordante correspondant au relèvement de $\pa \overline{\bbH^d\times (\Im\bbH)^d}$ à $\widehat{\bbH^d\times (\Im\bbH)^d}$.  Alors les applications de contraction des divers éclatements apparaissant dans la définition de $\widehat{\bbH^d\times (\Im\bbH)^d}$ induisent une structure de fibrés itérés $\widehat{\phi}$ sur $\widehat{\bbH^d\times (\Im\bbH)^d}$ avec fibré 
\begin{equation}
   \widehat{\phi}_\cI: \widehat{H}_{\cI}\to \widehat{S}_{\cI}
\label{QFB.12}\end{equation}
sur $\widehat{H}_\cI$ ayant pour base la variété à coins
\begin{equation}
  \widehat{S}_{\cI}:= [\overline{s}_{\cI}; \overline{s}_{\cJ} \; \mbox{pour} \; \emptyset\ne \cJ\subsetneq \cI]
\label{QFB.13}\end{equation}
obtenue de $\overline{s}_{\cI}$ en éclatant (au sens usuel, donc sans pondération) les sous-variétés $\overline{s}_{\cJ}$ pour $ \emptyset\ne \cJ\subsetneq \cI$ dans un ordre compatible avec l'ordre partiel.  En particulier, l'intérieur de $\widehat{S}_\cI$ est paramétré par $\varpi_i$ pour $i\in\cI$.  Sur $\widehat{H}_{\max}$, le fibré a pour base $\widehat{S}_{\max}=\widehat{H}_{\max}$ et correspond à l'application identité.  Remarquons qu'à l'exception de $\widehat{H}_{\max}$, la fermeture $\widehat{X}_{\bbH^d}$ de $X_{\bbH^d}$ dans $\widehat{\bbH^d\times (\Im\bbH)^d}$ coupe toutes les hypersurfaces bordantes de $\widehat{\bbH^d\times (\Im\bbH)^d}$.  Pour cette raison, l'hypersurface bordante $\widehat{H}_{\max}$ ne jouera pas de rôle important dans notre discussion.

\begin{lemme}
Le sous-ensemble $\widehat{X}_{\bbH^d}$ est une $p$-sous-variété de $\widehat{\bbH^d\times (\Im\bbH)^d}$ et la structure de fibrés itérés $\widehat{\phi}$ sur $\widehat{\bbH^d\times (\Im\bbH)^d}$ induit par restriction une structure de fibrés itérés $\widehat{\phi}|_{\pa\hat{X}_{\bbH^d}}$ sur $\widehat{X}_{\bbH^d}$.
\label{QFB.14}\end{lemme}
\begin{proof}
Les équations \eqref{QFB.11a} et \eqref{QFB.11b} se décomposent bien par rapport au fibré $\widehat{\phi}_{\cI}: \widehat{H}_{\cI}\to \widehat{S}_{\cI}$, les fonctions $\varpi_i$ pour $i\in\cI$ paramétrant (l'intérieur de) la base $\widehat{S}_\cI$, le reste des variables paramétrant les fibres.  Le fait que $\widehat{X}_{\bbH^d}$ ne coupe pas $\widehat{H}_{\max}$ vient du fait que pour $\varpi_i\ne 0$ fixé, la solution  de $\mu_{\bbH}(\widehat{\omega}_i)=\varpi_i$ dans \eqref{QFB.11b}  correspond à un cercle, plus précisément une orbite de l'action de $\bbS^1$ sur $\bbH$.  D'autre part, les équations \eqref{QFB.11a} définissent l'analogue de la variété $X_{\bbH^d}$, mais en dimension $4|\cI^c|$, où $|\cI^c|$ est la cardinalité de $\cI^c$.  

Cela suggère de procéder par induction sur $d$ pour démontrer le lemme, le cas $d=1$ étant trivial puisqu'il n'implique que l'éclatement pondéré de $\overline{s}_{\{1\}}$ et que les équations près de $\widehat{H}_{\{1\}}$ sont alors données par
$$
       \mu_{\bbH}(\widehat{\omega}_1)=\varpi_1
$$   
et définissent clairement une $p$-sous-variété.   Pour $\widehat{H}_{\cI}$ minimale, les équations \eqref{QFB.11a} et \eqref{QFB.11b} et notre hypothèse d'induction montrent que $\widehat{X}_{\bbH^d}$ est une $p$-sous-variété près de $\widehat{H}_{\cI}$.  Pour $\widehat{H}_{\cI}$ pas nécessairement minimale, on peut supposer, en procédant par induction sur la suite d'éclatements pondérés, que le résultat est déjà établi près de $\widehat{H}_{\cJ}$ pour $\widehat{H}_{\cJ}<\widehat{H}_\cI$.  Or, loin de $\widehat{H}_{\cJ}$ pour $\widehat{H}_{\cJ}<\widehat{H}_\cI$, on peut procéder comme dans le cas où $\hH_{\cI}$ est minimale pour montrer que $\widehat{X}_{\bbH^d}$ est une $p$-sous-variété près de $\hH_{\cI}$.  Ceci termine l'induction sur la suite d'éclatements pondérés, ainsi que l'induction sur $d$, montrant que $\widehat{X}_{\bbH^d}$ est bien une $p$-sous-variété de $\widehat{\bbH^d\times (\Im\bbH)^d}$.

Fort de ce résultat, il découle clairement des équations \eqref{QFB.11a} et \eqref{QFB.11b} que la structure de fibrés itérés de $\widehat{\bbH^d\times (\Im\bbH)^d}$ induit par restriction une structure de fibrés itérés sur $\widehat{X}_{\bbH^d}$ avec fibré 
\begin{equation}
  \widehat{\phi}_\cI|_{\hH_{\cI}\cap\widehat{X}_{\bbH^d}}: \hH_{\cI}\cap\widehat{X}_{\bbH^d}\to \widehat{S}_\cI
\label{fibrei.1}\end{equation}
ayant la même base $\widehat{S}_\cI$ que $\widehat{\phi}_\cI$.
\end{proof}

La fonction $u$ sur $\overline{\bbH^d\times (\Im\bbH)^d}$ se relève en une fonction bordante totale sur $\widehat{\bbH^d\times (\Im\bbH)^d}$.  Par restriction, elle induit aussi une fonction bordante totale sur $\widehat{X}_{\bbH^d}$.  
\begin{theoreme}
La déformation de Taub-NUT $g_{\TN_\sigma}$ sur $X_{\bbH^d}$ spécifiée par $\sigma$ est une métrique $\QFB$ polyhomogène sur $(\widehat{X}_{\bbH^d},\widehat{\phi}|_{\pa\widehat{X}_{\bbH^d}})$ pour l'algèbre Lie des champs vectoriels $\QFB$ associée au choix de fonction bordante totale induit par $u$.   
\label{QFB.15}\end{theoreme}
\begin{proof}
Remarquons d'abord que la restriction de la métrique euclidienne sur $\hX_{\bbH^d}$ n'est pas une métrique $\QFB$.  Cela vient du fait que dans les coordonnées \eqref{QFB.10}, elle prend la forme
$$
    \sum_{i,j}^{4d} \Psi_{ij} \nu^i\otimes\nu^j
$$
avec les coefficients $\Psi_{ij}$ polyhomogènes et bornés définissant une matrice uniformément définie positive, où
$\{\nu^1,\ldots,\nu^{4d}\}$ est une base locale de $1$-formes correspondant aux $1$-formes 
$$
  \frac{du}{u^2}, dp_i, dq_{\sigma,i} \; \mbox{pour}\; i\in\cI^c \quad \mbox{et} \quad \frac{d\widehat{\omega}_i}{u^{\frac12}}, \; \frac{d\varpi_i}{u} \; \mbox{pour} \; i\in\cI.
$$
Or, une métrique $\QFB$ polyhomogène prend plutôt la forme
$$
    \sum_{i,j}^{4d} \Psi_{ij} \widehat{\nu}^i\otimes\widehat{\nu}^j
$$
pour $\{\widehat{\nu}^1,\ldots,\widehat{\nu}^{4d}\}$ correspondant à la base locale de $1$-formes $\{\nu^1,\ldots,\nu^{4d}\}$, mais avec les $1$-formes $\frac{d\widehat{\omega}_i}{u^{\frac12}}$ pour $i\in\cI$ remplacées par $d\widehat{\omega}_i$.  

Cependant, par le Lemme~\ref{dtn.7}, la déformation de Taub-NUT a justement pour effet principal, pour $i\in \cI$,  de \og remplacer \fg $\frac{d\widehat{\omega}_i}{u^{\frac12}}$ par $d\widehat{\omega}_i$ le long des cercles définis par l'équation $\mu_{\bbH}(\widehat{\omega}_i)=\varpi_i$ lorsque $\varpi_i$ varie, du moins en des endroits où l'action de $T^d$ est libre, c'est-à-dire pour $p_i\ne 0$ lorsque $i\in\cI^c$.  La métrique $g_{\TN_\sigma}$ est donc $\QFB$ polyhomogène dans cette région.  Près de $p_i=q_{\sigma,i}=0$, on peut utiliser les coordonnées sphériques autour d'un tel point et utiliser le  Lemme~\ref{dtn.7} pour vérifier que la métrique reste uniformément $\QFB$ lorsqu'on s'approche de $p_i$.  Puisque $\hX_{\bbH^d}$ est lisse près d'un tel point et que la métrique euclidienne est polyhomogène, on en déduit que $g_{\TN_\sigma}$ est aussi une métrique $\QFB$ polyhomogène près de $p_i=q_{\sigma,i}=0$.  On peut faire des calculs similaires dans d'autres systèmes de coordonnées pour montrer que $g_{\TN_\sigma}$ est bien une métrique $\QFB$ polyhomogène sur $\hX_{\bbH^d}$.
\end{proof}

Ce théorème montre entre autres que les déformations de Taub-NUT d'ordre maximal de la métrique euclidienne sont à géométrie bornée.  Il permet aussi d'identifier le cône tangent à l'infini, le même peu importe le choix de $\sigma$.
\begin{corollaire}
La variété riemannienne $(X_{\bbH^d},g_{\TN_\sigma})$ a pour unique cône tangent à l'infini $(\Im\bbH)^d$ muni de la métrique euclidienne.
\label{QFB.16}\end{corollaire} 
\begin{proof}
Par le Théorème~\ref{QFB.15} et sa preuve, le cône tangent est unique et correspond au cône ayant pour base $\widehat{S}_{\{1,\ldots,d\}}$ avec sa métrique induite, c'est-à-dire $V_{\{1,\ldots,d\}}= (\Im\bbH)^d$ muni de la métrique euclidienne. 
\end{proof}

L'exemple le plus simple de déformation de Taub-NUT d'ordre maximal de la métrique euclidienne est sans doute obtenu lorsque $\sigma: \bbR^d\to \bbR^d\cong \mathfrak{t}^d$ correspond à l'application identité.  La métrique $g_{\TN_\sigma}$ correspond alors au produit cartésien de $d$ copies de la métrique de Taub-NUT sur $\bbH$, en quel cas le fait que $g_{\TN_\sigma}$ est une métrique $\QFB$ découle aussi du fait que la classe des métriques $\QFB$ est fermée par rapport au produit cartésien \cite[Theorem~6.6]{KR3}.  En physique mathématique, un exemple important de déformation de Taub-NUT est donné par la métrique $L^2$ de l'espace de modules des monopôles centrés de type $(1,1,\ldots,1)$, qui par \cite{LWY1996,Murray1997} et \cite[Appendix]{Kraan} correspond à une déformation de Taub-NUT d'ordre maximal de la métrique euclidienne avec l'application $\sigma:\bbR^d\to \bbR^d$ choisie de sorte que les coordonnées dans \eqref{QFB.3} sont données par 
$$
   q_{\sigma,i}= q_{i+1}-q_i
$$ 
avec $(q_1,\ldots,q_{d+1})\in (\Im\bbH)^{d+1}$ les coordonnées canoniques sur $(\Im\bbH)^{d+1}$ et avec $(q_2-q_1,\ldots, q_{d+1}-q_d)$ vues comme des coordonnées sur le quotient diagonal $(\Im\bbH)^{d+1}/(\Im\bbH)$ muni de sa métrique euclidienne induite.  

Pour déterminer si deux déformations de Taub-NUT du Théorème~\ref{QFB.15} sont isométriquement distinctes, on peut regarder leur comportement asymptotique dans les fibres du fibré \eqref{fibrei.1} avec $\cI=\{1,\ldots,d\}$, ce qui donne le résultat suivant.
\begin{corollaire}
Deux déformations de Taub-NUT $g_{\TN_{\sigma_1}}$ et $g_{\TN_{\sigma_2}}$ du Théorème~\ref{QFB.15} sont isométriquement distinctes pourvu que les applications $\sigma_1$ et $\sigma_2$, vues comme des éléments de $\GL(d,\bbR)$, définissent des classes distinctes dans le quotient 
\begin{equation}
      \GL(d,\bbZ)\setminus \GL(d,\bbR)/\operatorname{O}(d).
\label{QFB.17a}\end{equation}
\label{QFB.17}\end{corollaire} 
\begin{proof}
Si $g_{\TN_{\sigma_1}}$ et $g_{\TN_{\sigma_2}}$ sont isométriques, alors elles induisent les mêmes métriques plates dans les fibres de \eqref{fibrei.1} pour $\cI=\{1,\ldots,d\}$.  Par \eqref{base.1}, le Lemme~\ref{dtn.7} et la description des métriques plates sur le tore donnée par Wolf \cite{Wolf1973}, cela signifie que $\sigma_1$ et $\sigma_2$ engendrent la même classe dans le quotient \eqref{QFB.17a}, d'où le résultat.

\end{proof}

Ce critère confirme en particulier que la métrique $L^2$ de l'espace de modules de monopôles centrés de type $(1,1,\ldots,1)$ n'est pas isométrique à un produit cartésien de métriques de Taub-NUT.  Cela dit, comme les déformations de Taub-NUT du Théorème~\ref{QFB.15} sont toutes $\QFB$ par rapport à la même algèbre de Lie de champs vectoriels $\QFB$, elles sont toutes mutuellement quasi-isométriques, ce qui nous permet de déterminer la dimension de l'espace $\cH^k(X_{\bbH^d},g_{\TN_\sigma})$ des $k$-formes harmoniques de carré intégrable sur $(X_{\bbH^d},g_{\TN_\sigma})$.  
\begin{corollaire}
Si $(X_{\bbH^d},g_{\TN_\sigma})$ est une déformation de Taub-NUT du Théorème~\ref{QFB.15}, alors
$$
   \dim\cH^k(X_{\bbH^d},g_{\TN_\sigma})= \left\{ \begin{array}{ll} 1, & k=2d, \\ 0, & \mbox{autrement}.  \end{array}   \right.
$$
\label{QFB.18}\end{corollaire}
\begin{proof}
Comme la dimension de $\cH^k(X_{\bbH^d},g_{\TN_\sigma})$ ne dépend que de la classe de quasi-isométrie de $g_{\TN_\sigma}$, on peut supposer par la discussion précédente que $g_{\TN_\sigma}$ est isométrique à un produit cartésien de métriques de Taub-NUT,
$$
    (X_{\bbH^d},g_{\TN_\sigma})\cong (\bbH^d, \bigoplus_{i=1}^d \pr_i^*g_{\TN}),
$$
où $g_{\TN}$ est la métrique de Taub-NUT sur $\bbH$ et $\pr_i: \bbH^d\to\bbH$ est la projection sur le $i$ ième facteur.  Par \cite{Hitchin}, voir aussi \cite{HHM2004}, le résultat est vrai pour $d=1$.  
Pour $d>1$, on voit donc par séparation des variables qu'une forme harmonique $\alpha$ de carré intégrable est forcément de la forme
$$
    \alpha= (\pr_1^*\alpha_1)\wedge\cdots \wedge \pr_d^*(\alpha_d)
$$
pour $\alpha_i\in \cH^*(\bbH,g_{\TN})=\cH^2(\bbH, g_{\TN})$, d'où le résultat.
\end{proof}
Le Corollaire~\ref{QFB.18} donne en particulier une démonstration de la conjecture de $S$-dualité de Sen \cite{Sen,Gibbons1996} pour les monopôles centrés de type $(1,1,\ldots,1)$.

\bibliography{GIVHT}
\bibliographystyle{amsplain}

\small{Département de Mathématiques, Université du Québec à Montréal} 

 \small{\textit{Courriel}\! \!\!\!: rochon.frederic@uqam.ca}

\end{document}